\newcommand{\R}{\mathbb{R}}
\newcommand{\N}{\mathbb{N}}
\newcommand{\CO}{C^{1,\alpha}(\partial\Omega)}
\newcommand{\COo}{C^{1,\alpha}(\partial\Omega^o)}
\newcommand{\COi}{C^{1,\alpha}(\partial\Omega^i)}
\newcommand{\COeiharm}{C^{1,\alpha}_{\mathrm{harm}}(\epsilon \overline{\Omega^i})}
\newtheorem{teo}{Theorem}[section]
\newtheorem{defin}[teo]{Definition}
\newtheorem{prop}[teo]{Proposition}
\newtheorem{lemma}[teo]{Lemma}
\newtheorem{cor}[teo]{Corollary}
\def\ps@pprintTitle{%
	\let\@oddhead\@empty
	\let\@evenhead\@empty
	\let\@oddfoot\@empty
	\let\@evenfoot\@oddfoot
}
\begin{document}
\date{}
\begin{frontmatter}



\title{Local uniqueness of the solutions for a singularly perturbed nonlinear nonautonomous transmission problem}


\author[MDR]{Matteo Dalla Riva}
\author[RM]{Riccardo Molinarolo}
\author[PM]{Paolo Musolino}

\address[MDR]{Department of Mathematics, The University of Tulsa, 800 South Tucker Drive, Tulsa, Oklahoma 74104, USA. matteo-dallariva@utulsa.edu}
\address[RM]{Department of Mathematics, Aberystwyth University, Ceredigion SY23 3BZ, Wales, United Kingdom. rim22@aber.ac.uk}
\address[PM]{Dipartimento di Matematica `Tullio Levi-Civita', Universit\`a degli Studi di Padova, Via Trieste 63, Padova 35121, Italy. musolino@math.unipd.it}

\begin{abstract}
We consider the Laplace equation in a domain of $\mathbb{R}^n$, $n\ge 3$, with a small inclusion of size $\epsilon$. On the boundary of the inclusion we define a nonlinear nonautonomous transmission condition. For $\epsilon$ small enough one can prove that the problem has solutions.  In this paper, we study the local uniqueness of such solutions. 
\end{abstract}


\begin{keyword}
nonlinear nonautonomous transmission problem\sep local uniqueness of the solutions\sep singularly perturbed perforated domain
\MSC[2010] 35J25\sep 31B10\sep 35J65\sep 35B25\sep 35A02 



\end{keyword}

\end{frontmatter}


\section{Introduction}

We study local uniqueness  properties of the solutions of a nonlinear nonautonomous transmission problem for the Laplace equation in the pair of sets  consisting of a perforated domain and a small inclusion.

We begin by presenting the geometric framework of our problem. We fix once for all a natural number 
\[
n\ge 3
\]
that will be  the dimension of the space $\mathbb{R}^n$ we are going to work in and a parameter
\[
\alpha \in ]0,1[
\]
which we use to define the regularity of our sets and functions. We remark that the case of dimension $n=2$ requires specific techniques and it is not treated in this paper (the analysis for $n=3$ and for $n\ge 3$ is instead very similar).

Then, we introduce two sets $\Omega^o$ and $\Omega^i$ that satisfy the following conditions:
\[
	\begin{split}
		&\mbox{$\Omega^o$, $\Omega^i$ are bounded open connected subsets of $\R^n$ of class $C^{1,\alpha}$,} 
		\\
		&\mbox{their exteriors  $\R^n\setminus \overline{\Omega^o}$ and $\R^n\setminus \overline{\Omega^i}$ are connected,}
		\\
		&\mbox{and the origin  $0$ of $\mathbb{R}^n$ belongs both to $\Omega^o$ and to $\Omega^i$.}
	\end{split}
\]
Here the superscript  ``$o$" stands for ``outer domain" whereas the superscript ``$i$"  stands for ``inner domain". We take
\[
\epsilon_0 \equiv \mbox{sup}\{\theta \in ]0, +\infty[: \epsilon \overline{\Omega^i} \subseteq \Omega^o, \ \forall \epsilon \in ]- \theta, \theta[  \}, 
\]
and we define the perforated domain $\Omega(\epsilon)$ by setting
\[
\Omega(\epsilon) \equiv \Omega^o \setminus \epsilon \overline{\Omega^i}
\]
for all $\epsilon\in]-\epsilon_0,\epsilon_0[$. Then we fix three functions
\begin{equation}\label{FGf^o}
\begin{split}
&F \colon ]-\epsilon_0,\epsilon_0[ \times \partial \Omega ^i \times \R \to \R\,,\\
&G \colon ]-\epsilon_0,\epsilon_0[ \times \partial \Omega ^i \times \R \to \R\,,\\
&f^o \in \COo
\end{split}
\end{equation}
and, for $\epsilon \in ]0,\epsilon_0[$, we consider the following nonlinear nonautonomous transmission problem in the perforated domain $\Omega(\epsilon)$ for a pair of functions $(u^o,u^i)\in C^{1,\alpha}(\overline{\Omega(\epsilon)})\times C^{1,\alpha}(\overline{\epsilon\Omega^i})$:
\begin{equation}\label{princeq}
	\begin{cases}
		\Delta u^o = 0 & \mbox{in } \Omega(\epsilon), \\
		\Delta u^i = 0 & \mbox{in } \epsilon \Omega^i, \\
		u^o(x)=f^o(x) & \forall x \in \partial \Omega^o, \\
		u^o(x) = F\left(\epsilon,\frac{x}{\epsilon},u^i(x)\right) & \forall x \in \epsilon \partial \Omega^i, \\
		\nu_{\epsilon \Omega^i} \cdot \nabla u^o (x) - \nu_{\epsilon \Omega^i} \cdot \nabla u^i (x) = G\left(\epsilon, \frac{x}{\epsilon},u^i(x)\right) & \forall x \in \epsilon \partial \Omega^i.
	\end{cases}
\end{equation}
Here $\nu_{\epsilon\Omega^i}$ denotes the outer exterior normal to $\epsilon\Omega^i$.

Boundary value problems like \eqref{princeq} arise in the mathematical models for the heat conduction in (nonlinear) composite materials, as, for example, in Mishuris, Miszuris and \"Ochsner \cite{MiMiOc07,MiMiOc08}. In such cases, the functions $u^o$ and $u^i$ represent the temperature distribution in $\Omega(\epsilon)$ and in the inclusion $\epsilon \Omega^i$, respectively. The third condition in \eqref{princeq} means that we are prescribing the temperature distribution on the exterior boundary $\partial \Omega^o$. The fourth condition says that on the interface $\epsilon \partial \Omega^i$ the temperature distribution $u^o$ depends nonlinearly on the size of the inclusion, on the position on the interface, and on the temperature distribution $u^i$. The fifth conditions, instead, says that the jump of the heat flux on the interface depends nonlinearly on the size of the inclusion, on the position on the interface, and on the temperature distribution $u^i$.

In literature, existence and uniqueness of solutions of nonlinear boundary value problems have been largely investigated by means of variational techniques (see, e.g., the monographs of Ne\v{c}as \cite{Ne83} and of Roub\'i\v{c}ek \cite{Ro13} and the references therein). On the other hand, boundary value problems in singularly perturbed domains are usually studied by expansion methods of asymptotic analysis, such as the methods of matching inner and outer expansions (cf., e.g., Il'in \cite{Il78, Il92}) and the multiscale expansion method (as in Maz'ya, Nazarov, and Plamenenvskii \cite{MaNaPl00}, see also Iguernane et al.~\cite{IgNaRoSoSz09} in connection with nonlinear problems). 

In this paper  we do not use variational techniques and  neither we resort to asymptotic expansion methods. Instead, we adopt the functional analytic approach proposed by Lanza de Cristoforis (cf., e.g., \cite{La02,La08}). 
The key strategy of such approach is the transformation of the perturbed boundary value problem into a functional equation that can be studied by means of certain theorems of functional analysis, for example by the implicit function theorem or by the Schauder fixed-point theorem. Typically, this transformation is achieved with an integral representation of the solution, a suitable rescaling of the corresponding integral equations,  and an analysis based on results of potential theory. This approach has proven to be effective when dealing with nonlinear conditions on the boundary of small holes or inclusions. For example, it has been used in the papers \cite{La07} and \cite{La10} of Lanza de Cristoforis  to study a nonlinear Robin and a nonlinear transmission problem, respectively,  in the paper \cite{DaLa10} with Lanza de Cristoforis to analyze a nonlinear traction problem for an elastic body, in \cite{DaMi15} with Mishuris to prove the existence of solution for \eqref{princeq} in the case of a ``big" inclusion (that is, for $\epsilon>0$ fixed), and in \cite{Mo18} to show the existence of solution of \eqref{princeq} in the case of  ``small" inclusion (that is, for $\epsilon>0$ that tends to $0$).

In particular, in  \cite{Mo18} we have proven that, under suitable assumptions on the functions $F$ and $G$, there exists a family of functions $\{(u^o_\epsilon,u^i_\epsilon)\}_{\epsilon\in]0,\epsilon'[}$, with $\epsilon'\in]0,\epsilon_0[$, such that each pair $(u^o_\epsilon,u^i_\epsilon)$ is a solution of  \eqref{princeq} for the corresponding value of $\epsilon$. Moreover, the dependence of the functions $u^o_\epsilon$ and $u^i_\epsilon$ upon the parameter $\epsilon$ can be described in terms of real analytic maps of $\epsilon$. The aim of this paper is to show that each of such solutions $(u^o_\epsilon,u^i_\epsilon)$ is locally unique. In other words, we shall verify that, for $\epsilon>0$ smaller than a certain  $\epsilon^\ast \in ]0,\epsilon_0[$, any solution  $(v^o,v^i)$ of problem \eqref{princeq} that is ``close enough'' to the pair $(u^o_\epsilon,u^i_\epsilon)$   has to coincide with $(u^o_\epsilon,u^i_\epsilon)$.  We will see that the ``distance''  from the solution $(u^o_\epsilon,u^i_\epsilon)$ can be measured solely in terms of the  $C^{1,\alpha}$ norm of the trace  of the rescaled function $v^i(\epsilon\cdot)$ on $\partial\Omega^i$. More precisely, we will prove that there is $\delta^*>0$ such that, if $\epsilon\in]0,\epsilon^*]$, $(v^o ,v^i )$ is a solution of \eqref{princeq}, and 
	\begin{equation}\label{cond}
	\left\| v^i(\epsilon \cdot ) - u^i_\epsilon(\epsilon \cdot ) \right\| _{C^{1,\alpha}(\partial \Omega^i)} < \epsilon\delta^\ast,
	\end{equation}
	then 
	\[
	(v^o,v^i)=(u^o_\epsilon,u^i_\epsilon)
	\] 
	(cf.~Theorem \ref{Thmunisol2} below). We note that in  \cite{DaMi15} it has been shown that for  a ``big'' inclusion (that is, with $\epsilon>0$ fixed)  problem \eqref{princeq}  may have  solutions that are not locally unique. Such a  different result reflects the fact that  the  solutions of \cite{DaMi15} are obtained by the Schauder fixed-point theorem, whereas the solutions of the present paper are obtained by means of the implicit function theorem.  
	
We will not provide in this paper an estimate for the values of $\epsilon^*$ and $\delta^*$. In principle, they could be obtained studying the norm of certain integral operators that we employ in our proofs. We observe that, in specific applications, for example to the heat conduction in composite materials, it might be important to understand if $\epsilon^*$ and $\delta^*$ are big enough for the model that one adopts. In particular, for very small $\epsilon^*$, that correspond to very small inclusions,  and very small $\delta^*$, that corresponds to very small differences of temperature, one may have to deal with different physical models.
	
	We also observe that uniqueness results are not new in the applications of the functional analytic approach to nonlinear boundary value problems (see, e.g.,  the above mentioned papers \cite{DaLa10,La07,La10}). However, the results so far presented concern the uniqueness of the entire family of solutions rather than the uniqueness of a single solution for $\epsilon>0$   fixed. For our specific problem \eqref{princeq}, a uniqueness result for the family $\{(u^o_\epsilon,u^i_\epsilon)\}_{\epsilon\in]0,\epsilon'[}$ would consist in proving that if    $\{(v^o_\epsilon,v^i_\epsilon)\}_{\epsilon\in]0,\epsilon'[}$ is another family of solutions which satisfies a certain limiting condition, for example that 
	\[
	\lim_{\epsilon\to 0}\epsilon^{-1}\left\| v^i_\epsilon(\epsilon \cdot ) - u^i_\epsilon(\epsilon \cdot ) \right\| _{C^{1,\alpha}(\partial \Omega^i)}=0,
	\]
	then 
	\[
	(v^o_\epsilon,v^i_\epsilon)=(u^o_\epsilon,u^i_\epsilon)
	\] 
	for $\epsilon$ small enough.
	
	One can verify that the local uniqueness of a single solution under condition \eqref{cond} implies the uniqueness of the family of solutions $\{(u^o_\epsilon,u^i_\epsilon)\}_{\epsilon\in]0,\epsilon_0[}$ in the sense described here above (cf. Corollary \ref{cor} below). From this point of view, we can say that the uniqueness result presented in this paper strengthen the uniqueness result for families which is typically obtained in the application of the functional analytic approach. 
	
	The paper is organized as follows. In Section \ref{notation} we define some of the symbols used later on. Section \ref{classresult} is a section of preliminaries, where we introduce some classical results of potential theory that we need.  In Section \ref{existenceresult} we recall some results of \cite{Mo18}  concerning the existence of a family $\{(u^o_\epsilon,u^i_\epsilon)\}_{\epsilon\in]0,\epsilon'[}$ of solutions of problem \eqref{princeq}. In Section \ref{localresult} we state and prove our main Theorem \ref{Thmunisol2}. The section consists of three subsections. In the first one we prove Theorem \ref{Thmunisol1}, which is a weaker version of Theorem \ref{Thmunisol2}  and follows directly from the Implicit Function Theorem argument used to obtain the family $\{(u^o_\epsilon,u^i_\epsilon)\}_{\epsilon\in]0,\epsilon'[}$. The statement of  Theorem \ref{Thmunisol1} is similar to that of Theorem  \ref{Thmunisol2}, but the assumptions are much stronger. In particular, together with condition \eqref{cond}, we have to require other  two conditions, namely that 
	\begin{equation}\label{cond2}
	\begin{split}
	\left\| v^o  - u^o_\epsilon \right\| _{C^{1,\alpha}(\partial \Omega^o)} < \epsilon\delta^\ast\quad\text{and}\quad
	\left\| v^o(\epsilon \cdot ) - u^o_\epsilon(\epsilon \cdot ) \right\| _{C^{1,\alpha}(\partial \Omega^i)} < \epsilon\delta^\ast,
	\end{split}
	\end{equation}
	in order to prove that $(v^o,v^i)=(u^o_\epsilon,u^i_\epsilon)$. In our main Theorem  \ref{Thmunisol2} we will see that the two conditions in \eqref{cond2} can be dropped and  \eqref{cond} is sufficient.  The proof of  Theorem  \ref{Thmunisol2} is presented in Subsection \ref{second} where we first show some results on real analytic composition operators in Schauder spaces, then we turn to introduce certain auxiliary maps $N$ and $S$, and finally we will be ready to state and prove our main theorem.  In the last Subsection \ref{family}, we see that the uniqueness of the family $\{(u^o_\epsilon,u^i_\epsilon)\}_{\epsilon\in]0,\epsilon'[}$ in the sense described here above can be obtained as a corollary of Theorem \ref{Thmunisol2}. At the end of the paper we have included an Appendix where we present some (classical) results on the product and composition in Schauder spaces.

\section{Notation}\label{notation}
We denote the norm of a real normed space $X$ by $\| \cdot \| _X$. We denote by $I_X$ the identity operator from $X$ to itself and we omit the subscript $X$ where no ambiguity can occur. For $x \in X$ and $R>0$, we denote by $B_X(x,R) \equiv \{y\in X : \|y-x\|_X < R \}$ the ball of radius $R$ centered at the point $x$. If $X=\R^d$, $d \in \N \setminus \{0,1\} $, we simply write $B_d(x,R)$. If $X$ and $Y$ are normed spaces we endow the product space $X \times Y$ with the norm defined by $\| (x,y) \|_{X \times Y} = \|x\|_X + \|y\|_Y $ for all $(x,y) \in X \times Y$, while we use the Euclidean norm for $\R^d$, $d\in\mathbb{N}\setminus\{0,1\}$. For $x \in \R^d$, $x_j$ denotes the $j$-th coordinate of $x$, $|x|$ denotes the Euclidean modulus of $x$ in $\R^d$. We denote by $\mathcal{L}(X,Y)$ the Banach space of linear and continuous map of $X$ to $Y$, equipped with its usual norm of the uniform convergence on the unit sphere of $X$. If $U$ is an open subset of $X$, and $F:U \to Y$ is a Fr\'echet-differentiable map in $U$, we denote the differential of $F$ by $dF$.
The inverse function of an invertible function $f$ is denoted by $f^{(-1)}$, while the reciprocal of a function $g$ is denoted by $g^{-1}$. Let $\Omega \subseteq \R^n$. Then $\overline{\Omega}$ denotes the closure of $\Omega$ in $\R^n$, $\partial \Omega$ denotes the boundary of $\Omega$, and $\nu_\Omega$ denotes the outward unit normal to $\partial \Omega$.
Let $\Omega$ be an open subset of $\R^n$ and $m \in \N \setminus \{0\}$. The space of $m$ times continuously differentiable real-valued function on $\Omega$ is denoted by $C^m(\Omega)$. Let $r \in \N \setminus \{0\}$, $f \in (C^m(\Omega))^r$. The $s$-th component of $f$ is denoted by $f_s$ and the gradient matrix of $f$ is denoted by $\nabla f$. Let $\eta=(\eta_1, \dots ,\eta_n) \in \N^n$ and $|\eta|=\eta_1+ \dots+\eta_n$. Then $D^\eta f \equiv \frac{\partial^{|\eta|}f}{\partial x^{\eta_1}_1, \dots , \partial x^{\eta_n}_n}$. If $r=1$, the Hessian matrix of the second-order partial derivatives of $f$ is denoted by $D^2 f$. The subspace of $C^m(\Omega)$ of those functions $f$ such that $f$ and its derivatives $D^\eta f$ of order $|\eta|\le m$ can be extended with continuity to $\overline{\Omega}$ is denoted $C^m(\overline{\Omega})$. The subspace of $C^m(\overline{\Omega})$ whose functions have $m$-the order derivatives that are H\"{o}lder continuous with exponent $\alpha \in ]0,1[$ is denoted $C^{m,\alpha}(\overline{\Omega})$.  If $f \in C^{0,\alpha}(\overline{\Omega})$, then its H\"{o}lder constant is defined as $|f : \Omega|_\alpha\equiv \mbox{sup} \left\{\frac{|f(x)-f(y)|}{|x-y|^\alpha} : x,y \in \overline{\Omega}, x \neq y \right\}$. If $\Omega$ is open and bounded, then the space $C^{m,\alpha}(\overline{\Omega})$, equipped with its usual norm $\|f\|_{C^{m,\alpha}(\overline{\Omega})} \equiv \|f\|_{C^{m}(\overline{\Omega})} + \sum_{|\eta|=m}{|D^\eta f : \Omega|_\alpha}$, is a Banach space. 
We denote by $C^{m,\alpha}_{\mathrm{loc}}(\R^n \setminus \Omega)$ the space of functions on $\R^n \setminus \Omega$ whose restriction to $\overline{U}$ belongs to $C^{m,\alpha}(\overline{U})$ for all open bounded subsets $U$ of $\R^n \setminus \Omega$. On $C^{m,\alpha}_{\mathrm{loc}}(\R^n \setminus \Omega)$ we consider the natural structure of Fr\'echet space. Finally we set
\begin{equation*}
C^{m,\alpha}_{\mathrm{harm}}(\overline{\Omega}) \equiv \{ u \in C^{m,\alpha}(\overline{\Omega}) \cap C^2(\Omega): \Delta u = 0 \text{ in } \Omega \}.
\end{equation*}

We say that a bounded open subset of $\R^n$ is of class $C^{m,\alpha}$ if it is a manifold with boundary imbedded in $\R^n$ of class $C^{m,\alpha}$. In particular, if $\Omega$ is a $C^{1,\alpha}$ subset of $\R^n$, then $\partial\Omega$ is a $C^{1,\alpha}$ sub-manifold of $\R^n$ of co-dimension $1$.  
If $M$ is a $C^{m,\alpha}$ sub-manifold of $\R^n$ of dimension $d\ge 1$, we define the space $C^{m,\alpha}(M)$ by exploiting a finite local parametrization. Namely, we take a finite open covering $\mathcal{U}_1, \dots, \mathcal{U}_k$ of $M$ and  $C^{m,\alpha}$ local parametrization maps $\gamma_l : \overline{B_{d}(0,1)} \to \overline{\mathcal{U}_l}$ with $l=1,\dots, k$ and we say that $\phi\in C^{m,\alpha}(M)$ if and only if $\phi\circ\gamma_l\in C^{m,\alpha}(\overline{B_{d}(0,1)})$ for all $l=1,\dots, k$. Then for all $\phi\in C^{m,\alpha}(M)$ we define
\[
\|\phi\|_{C^{m,\alpha}(M)} \equiv \sum_{l=1}^k \|\phi\circ\gamma_l \|_{C^{m,\alpha}(\overline{B_{d}(0,1)})}\,.
\]
One verifies that different $C^{m,\alpha}$ finite atlases define the same space  $C^{m,\alpha}(M)$ and equivalent norms on it.

We retain the standard notion for the Lebesgue spaces $L^p$, $p\ge 1$. If $\Omega$ is of class $C^{1,\alpha}$, we denote by $d\sigma$ the area element on $\partial\Omega$. If $Z$ is a subspace of $L^1(\partial \Omega)$, then we set
\begin{equation}\label{Z0}
Z_0 \equiv \left\{ f \in Z : \int_{\partial\Omega} f \,d\sigma = 0 \right\}.
\end{equation}

\section{Classical results of potential theory}\label{classresult}
In this section we present some classical results of potential theory. For the proofs we refer to Folland \cite{Fo95}, Gilbarg and Trudinger \cite{GiTr83}, Schauder \cite{Sc31}, and to the references therein.
\begin{defin}
	We denote by $S_n$ the function from $\R^n \setminus \{0\}$ to $\R$ defined by
	\[
	S_n(x) \equiv	\frac{|x|^{2-n}}{(2-n) s_n} \qquad \forall x \in \R^n \setminus \{0\}, 
	\]
	where $s_n$ denotes the $(n-1)$-dimensional measure of $\partial B_n(0,1)$.
\end{defin}    

$S_n$ is well known to be a fundamental solution of the Laplace operator.
Now let $\Omega$ be an open bounded connected subset of $\R^n$ of class $C^{1,\alpha}$.

\begin{defin}	
	We denote by $w_{\Omega}[\mu]$ the double layer potential with density $\mu$ given by
	\begin{equation*}
	w_{\Omega}[\mu](x) \equiv - \int_{\partial \Omega}{\nu_\Omega(y) \cdot \nabla S_n(x-y) \mu(y) \,d\sigma_y} \qquad \forall x \in \R^n	
	\end{equation*}
	for all $\mu \in \CO$.
	\\ 
	We denote by $W_{\partial\Omega}$ the boundary integral operator which takes $\mu \in \CO$ to the function $W_{\partial\Omega}[\mu]$ defined by
	\[
	W_{\partial\Omega}[\mu](x) \equiv - \int_{\partial \Omega}{\nu_\Omega(y) \cdot \nabla S_n(x-y) \mu(y) \,d\sigma_y} \qquad \forall x \in \partial \Omega.
	\]
\end{defin}
It is well known that, if $\mu \in \CO$, then $w_{\Omega}[\mu]_{| \Omega}$ admits a unique continuous extension to $\overline{\Omega}$, which we denote by $w^+_{\Omega}[\mu]$, and $w_{\Omega}[\mu]_{| \R \setminus \overline{\Omega}}$ admits a unique continuous extension to $\R \setminus \Omega$, which we denote by $w^-_{\Omega}[\mu]$.

In the following Theorem \ref{doublepot} we summarize classical results in potential theory.

\begin{teo}\label{doublepot}
	The following statements holds.
	\begin{enumerate}
		
		\item[(i)] Let $\mu \in \CO$. Then the function $w_{\Omega}[\mu]$ is harmonic in $\R^n\setminus \partial\Omega$ and at infinity. Moreover, we have the following jump relations
		\begin{equation*}
			w^\pm_{\Omega}[\mu](x) = \left( \pm \frac{1}{2} I + W_{\partial\Omega} \right)[\mu](x) \qquad \forall x \in \partial \Omega;
		\end{equation*}
		
		\item[(ii)] The map from $C^{1,\alpha}(\partial\Omega)$ to $C^{1,\alpha}(\overline{\Omega})$ which takes $\mu$ to $w^+_{\Omega}[\mu]$ is linear and continuous and the map from $C^{1,\alpha}(\partial\Omega)$ to $C^{1,\alpha}_{\mathrm{loc}}(\R\setminus\Omega)$ which takes $\mu$ to $w^-_{\Omega}[\mu]$ is linear and continuous;
		
		\item[(iii)] The map which takes $\mu$ to $W_{\partial\Omega}[\mu]$ is continuous from $\CO$ to itself;
		
		\item[(iv)] If $\R\setminus\overline{\Omega}$ is connected, then the operator $\frac{1}{2} I + W_{\partial\Omega}$ is an isomorphism from $\CO$ to itself.
	\end{enumerate}
\end{teo}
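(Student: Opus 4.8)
Since Theorem~\ref{doublepot} collects classical facts of potential theory, the plan is to combine the mapping properties of layer potentials on Schauder spaces with the Fredholm theory of the associated boundary integral operator, referring to \cite{Fo95,GiTr83,Sc31} for the full technical details. For (i), I would first note that $S_n(x-\cdot)$ is harmonic on $\R^n\setminus\{x\}$, so that differentiating under the integral sign shows that $w_\Omega[\mu]$ is harmonic in $\R^n\setminus\partial\Omega$, while the decay $\nabla S_n(x)=O(|x|^{1-n})$ yields harmonicity at infinity. For the jump relations I would split the density in the defining integral as $\mu(y)=\mu(x)+\bigl(\mu(y)-\mu(x)\bigr)$ and use the Gauss-type identity
\[
-\int_{\partial\Omega}\nu_\Omega(y)\cdot\nabla S_n(x-y)\,d\sigma_y=
\begin{cases}
1 & \text{if } x\in\Omega,\\
\tfrac12 & \text{if } x\in\partial\Omega,\\
0 & \text{if } x\in\R^n\setminus\overline{\Omega},
\end{cases}
\]
together with the H\"older continuity of $\mu$, which makes the remaining integral $-\int_{\partial\Omega}\nu_\Omega(y)\cdot\nabla S_n(x-y)\bigl(\mu(y)-\mu(x)\bigr)\,d\sigma_y$ continuous up to $\partial\Omega$ from each side; this gives $w^\pm_\Omega[\mu]=(\pm\tfrac12 I+W_{\partial\Omega})[\mu]$ on $\partial\Omega$.

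For (ii) and (iii) the core is the Schauder-type regularity estimate for the double layer potential. I would express the first order derivatives of $w_\Omega[\mu]$ through single layer potentials whose densities are built from the tangential derivatives of $\mu$ (integrating by parts on the manifold $\partial\Omega$), and then invoke the mapping properties of the single layer potential together with the classical estimates for weakly singular integral operators on $C^{1,\alpha}$ hypersurfaces; this yields continuity of $\mu\mapsto w^+_\Omega[\mu]$ into $C^{1,\alpha}(\overline{\Omega})$ and of $\mu\mapsto w^-_\Omega[\mu]$ into $C^{1,\alpha}_{\mathrm{loc}}(\R^n\setminus\Omega)$, and (iii) then follows from (i) and (ii) by taking boundary traces. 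I expect this to be the main technical obstacle of the statement: the singular integral estimates have to be carried out on a boundary that is only of class $C^{1,\alpha}$, which is precisely where the H\"older regularity of $\partial\Omega$ enters in an essential way (for a merely Lipschitz boundary such estimates fail).

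Finally, for (iv) I would argue by the Fredholm alternative. On a $C^{1,\alpha}$ hypersurface the quantity $\nu_\Omega(y)\cdot(x-y)$ is $O(|x-y|^{1+\alpha})$, which renders the kernel $\nu_\Omega(y)\cdot\nabla S_n(x-y)$ weakly singular and $W_{\partial\Omega}$ compact from $C^{1,\alpha}(\partial\Omega)$ into itself, so that $\tfrac12 I+W_{\partial\Omega}$ is Fredholm of index zero and it suffices to prove injectivity. If $(\tfrac12 I+W_{\partial\Omega})[\mu]=0$, then by (i) $w^+_\Omega[\mu]=0$ on $\partial\Omega$, hence $w^+_\Omega[\mu]\equiv0$ in $\overline{\Omega}$ by uniqueness for the interior Dirichlet problem, and in particular its normal derivative on $\partial\Omega$ vanishes. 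Since the normal derivative of a double layer potential does not jump across $\partial\Omega$, the exterior normal derivative of $w^-_\Omega[\mu]$ vanishes as well; as $w^-_\Omega[\mu]$ is harmonic in $\R^n\setminus\overline{\Omega}$ and at infinity (by (i)) and $\R^n\setminus\overline{\Omega}$ is connected, uniqueness for the exterior Neumann problem forces $w^-_\Omega[\mu]\equiv0$. The jump relation in (i) then gives $\mu=w^+_\Omega[\mu]-w^-_\Omega[\mu]=0$ on $\partial\Omega$, so $\tfrac12 I+W_{\partial\Omega}$ is injective, hence onto by the Fredholm alternative, and the open mapping theorem yields the continuity of the inverse, completing the proof.
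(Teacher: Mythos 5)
The paper offers no proof of this theorem at all: Section \ref{classresult} states these facts as classical and refers to Folland, Gilbarg--Trudinger and Schauder, so your sketch can only be measured against the classical arguments. For (i)--(iii) your outline is consistent with them: the Gauss identity plus the splitting $\mu(y)=\mu(x)+(\mu(y)-\mu(x))$ is the standard proof of the jump relations, and reducing the gradient of the double layer potential to single layer potentials of tangential derivatives of the density, combined with estimates for weakly singular kernels on a $C^{1,\alpha}$ hypersurface, is a legitimate (if technically demanding) route to (ii), with (iii) following by taking traces.

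The genuine gap is in (iv). You claim that, since $\nu_\Omega(y)\cdot(x-y)=O(|x-y|^{1+\alpha})$, the kernel is weakly singular and therefore $W_{\partial\Omega}$ is \emph{compact from $C^{1,\alpha}(\partial\Omega)$ into itself}. Weak singularity of the kernel gives compactness at the level of $C^{0}(\partial\Omega)$ (and of $C^{0,\beta}(\partial\Omega)$, $\beta\le\alpha$, via the smoothing property $C^{0}\to C^{0,\alpha}$), but it says nothing about the $C^{1,\alpha}$ level: there the relevant object is the tangential gradient of $W_{\partial\Omega}[\mu]$, whose kernel is genuinely singular (Calder\'on--Zygmund type), and since $\partial\Omega$ is only of class $C^{1,\alpha}$ there is no H\"older gain left to exploit --- the operator maps $C^{1,\alpha}(\partial\Omega)$ into $C^{1,\alpha}(\partial\Omega)$ and not into any better space, so the usual ``smoothing plus compact embedding'' mechanism fails. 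Compactness of $W_{\partial\Omega}$ on $C^{1,\alpha}(\partial\Omega)$ for a boundary of class exactly $C^{1,\alpha}$ is not a standard fact (it is available when $\partial\Omega$ is smoother, e.g.\ of class $C^{2}$ or $C^{1,\beta}$ with $\beta>\alpha$), so the Riesz--Fredholm step as you state it is unjustified. The classical resolution (Schauder's) is to run the Fredholm alternative in $C^{0}(\partial\Omega)$ or $C^{0,\alpha}(\partial\Omega)$, where your compactness argument is valid, prove injectivity exactly as you do, and then add a separate, nontrivial regularity theorem: if $\bigl(\tfrac12 I+W_{\partial\Omega}\bigr)[\mu]=f$ with $f\in C^{1,\alpha}(\partial\Omega)$ and $\mu$ merely continuous, then $\mu\in C^{1,\alpha}(\partial\Omega)$; this is precisely the step your compactness claim was meant to bypass, and it is where the real work lies. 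A minor additional point: your injectivity argument uses the Lyapunov--Tauber property (no jump of the normal derivative of the double layer potential), which at $C^{1,\alpha}$ regularity is itself a nontrivial classical result and should be invoked explicitly with a reference rather than in passing.
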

\section{An existence result for the solutions of problem $(\ref{princeq})$}\label{existenceresult}
In this section we recall some results of \cite{Mo18} on the existence of a family of solutions for problem \eqref{princeq}. In what follows $\mathfrak{u}^o$ denotes the unique solution in $C^{1,\alpha}(\overline{\Omega^o})$ of the interior Dirichlet problem in $\Omega^o$ with boundary datum $f^o$, namely
\[
\begin{cases}
\Delta \mathfrak{u}^o=0&\text{in }\Omega^o\,,\\
\mathfrak{u}^o=f^o&\text{on }\partial\Omega^o\,.
\end{cases}
\]

Then we have the following Proposition \ref{rappresentsol}, where harmonic functions in $\overline{\Omega(\epsilon)}$ and $\overline{\epsilon \Omega^i}$ are represented in terms of $\mathfrak{u}^o$, double layer potentials with appropriate densities, and a suitable restriction of the  fundamental solution $S_n$ (cf. \cite[Prop. 5.1]{Mo18}).

\begin{prop}\label{rappresentsol}
	Let $\epsilon \in ]0,\epsilon_0[$. The map 
	\[
	(U^o_\epsilon[\cdot,\cdot,\cdot,\cdot], U^i_\epsilon[\cdot,\cdot,\cdot,\cdot])
	\]
	from $\COo \times \COi_0 \times \R \times \COi$ to $C^{1,\alpha}_{\mathrm{harm}}(\overline{\Omega(\epsilon)}) \times \COeiharm$ which takes $(\phi^o,\phi^i,\zeta,\psi^i)$ to the pair of functions
	\begin{equation*}
	(U^o_\epsilon[\phi^o,\phi^i,\zeta,\psi^i], U^i_\epsilon[\phi^o,\phi^i,\zeta,\psi^i])
	\end{equation*}
	defined by
	\begin{equation}\label{rappsol}
	\begin{aligned}
	& U^o_\epsilon[\phi^o,\phi^i,\zeta,\psi^i](x)\\
	&\  \equiv \mathfrak{u}^o(x) + \epsilon w^+_{\Omega^o}[\phi^o](x) + \epsilon w^-_{\epsilon\Omega^i}\left[\phi^i\left(\frac{\cdot}{\epsilon}\right)\right](x) + \epsilon^{n-1}\zeta\, S_n(x)  && \qquad \forall x \in \overline{\Omega(\epsilon)}, 
	\\
	& U^i_\epsilon[\phi^o,\phi^i,\zeta,\psi^i](x) \equiv \epsilon w^+_{\epsilon\Omega^i}\left[\psi^i\left(\frac{\cdot}{\epsilon}\right)\right](x) + \zeta^i && \qquad \forall x \in \epsilon \overline{\Omega^i},
	\end{aligned}
	\end{equation}
	is bijective.
\end{prop}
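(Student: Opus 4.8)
The plan is to construct an explicit inverse. Given a pair $(u^o,u^i) \in C^{1,\alpha}_{\mathrm{harm}}(\overline{\Omega(\epsilon)}) \times \COeiharm$, I need to recover a unique quadruple $(\phi^o,\phi^i,\zeta,\psi^i)$. First I would deal with the inner function $u^i$. Since $u^i$ is harmonic in the ball-like domain $\epsilon\Omega^i$ and continuous up to the boundary, I expect to write $u^i = U^i_\epsilon[\phi^o,\phi^i,\zeta,\psi^i]$ by choosing $\zeta^i$ to be a suitable constant and $\psi^i(\cdot/\epsilon)$ to be the density of a double layer potential matching $u^i - \zeta^i$ on $\epsilon\partial\Omega^i$. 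Concretely, the constant $\zeta$ should be determined so that $u^i - \zeta^i$ has zero integral mean over $\partial(\epsilon\Omega^i)$ (or some analogous normalization), and then by Theorem \ref{doublepot}(iv) applied to $\epsilon\Omega^i$ — whose exterior is connected since that of $\Omega^i$ is — the operator $\frac12 I + W_{\epsilon\partial\Omega^i}$ is an isomorphism, so $\psi^i(\cdot/\epsilon) = (\frac12 I + W_{\epsilon\partial\Omega^i})^{(-1)}[(u^i - \zeta^i)|_{\epsilon\partial\Omega^i}]$ is uniquely determined, hence so is $\psi^i$ by rescaling. The factor $\epsilon$ in front of $w^+_{\epsilon\Omega^i}$ is harmless since $\epsilon \neq 0$.

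Next I would handle the outer function. Set $\tilde u^o \equiv u^o - \mathfrak{u}^o - \epsilon^{n-1}\zeta\, S_n$, which is harmonic in $\Omega(\epsilon)$ and continuous up to $\overline{\Omega(\epsilon)}$. The remaining task is to represent $\tilde u^o$ as $\epsilon w^+_{\Omega^o}[\phi^o] + \epsilon w^-_{\epsilon\Omega^i}[\phi^i(\cdot/\epsilon)]$ on $\overline{\Omega(\epsilon)}$, which amounts to solving a system of two boundary integral equations: the trace on $\partial\Omega^o$ gives one equation involving $(\frac12 I + W_{\partial\Omega^o})[\phi^o]$ plus the (smooth) restriction of $w^-_{\epsilon\Omega^i}[\phi^i(\cdot/\epsilon)]$ to $\partial\Omega^o$, and the trace on $\epsilon\partial\Omega^i$ gives a second equation involving $(-\frac12 I + W_{\epsilon\partial\Omega^i})[\phi^i(\cdot/\epsilon)]$ plus the restriction of $w^+_{\Omega^o}[\phi^o]$ to $\epsilon\partial\Omega^i$. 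The diagonal operators $\frac12 I + W_{\partial\Omega^o}$ and $-\frac12 I + W_{\epsilon\partial\Omega^i}$ are both isomorphisms: the first by Theorem \ref{doublepot}(iv) since $\R^n\setminus\overline{\Omega^o}$ is connected, and the second because $-\frac12 I + W_{\epsilon\partial\Omega^i}$ is invertible on $C^{1,\alpha}(\epsilon\partial\Omega^i)$ modulo constants — here the constraint $\phi^i \in \COi_0$ (zero integral) is exactly what makes this restriction bijective, which is why the domain of the map uses $\COi_0$ rather than all of $\COi$. One then inverts the full $2\times 2$ system by a Neumann-series / perturbation argument, or simply by noting that the off-diagonal operators are compact (indeed they have smooth kernels since $\partial\Omega^o$ and $\epsilon\partial\Omega^i$ are disjoint) so the system is Fredholm of index zero, and injectivity follows from the uniqueness of the decomposition of a harmonic function in $\Omega(\epsilon)$ into interior and exterior layer potentials.

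The main obstacle I anticipate is the injectivity/surjectivity bookkeeping for the coupled system on the two-component boundary $\partial\Omega(\epsilon) = \partial\Omega^o \cup \epsilon\partial\Omega^i$, together with keeping precise track of which normalization (the zero-mean conditions implicit in $\COi_0$ and in the choice of $\zeta$) removes exactly the right one-dimensional obstructions so that the count of free parameters matches. In particular one must check that the constant $\zeta$ is genuinely forced — it is read off, e.g., from the flux of $\tilde u^o$ or $u^i$ through $\epsilon\partial\Omega^i$, since $S_n$ is the only ingredient carrying a nonzero such flux — and that with $\zeta$ fixed the pair $(\phi^o,\phi^i)$ is then uniquely and continuously determined. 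Since all the constituent operators are linear and continuous by Theorem \ref{doublepot}(ii)–(iv) and the inversions are of isomorphisms or of Fredholm operators of index zero with trivial kernel, continuity of the inverse map is automatic; but as the statement only claims bijectivity, it suffices to exhibit the inverse as above and verify it is well-defined and two-sided.
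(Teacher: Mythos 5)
There is a genuine gap, and it lies exactly where you yourself flag the ``main obstacle'': the bookkeeping of the two constants. In \eqref{rappsol} the constant $\zeta^i$ is not a parameter of the quadruple at all — it is the fixed real number appearing in assumption \eqref{zetaicond} (note how the two $\zeta^i$'s cancel in \eqref{(u^i-u_e^i)}); the free real parameter $\zeta$ enters only $U^o_\epsilon$, through $\epsilon^{n-1}\zeta S_n$. Consequently the inner function determines $\psi^i$ and nothing else: since $\tfrac12 I+W_{\partial(\epsilon\Omega^i)}$ is an isomorphism of the \emph{whole} space $C^{1,\alpha}(\epsilon\partial\Omega^i)$ (Theorem \ref{doublepot} (iv)), the equation $\epsilon\bigl(\tfrac12 I+W\bigr)[\psi^i(\cdot/\epsilon)]=(u^i-\zeta^i)|_{\epsilon\partial\Omega^i}$ has a unique solution with no zero-mean normalization, and no choice of $\zeta$ is available or needed there. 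Your proposal to ``determine $\zeta$ so that $u^i-\zeta^i$ has zero integral mean'' (and later to read $\zeta$ off the flux of $u^i$) cannot work: $u^i$ is harmonic \emph{inside} $\epsilon\Omega^i$, so its flux through $\epsilon\partial\Omega^i$ vanishes identically. The constant $\zeta$ is forced by the \emph{outer} function, namely by the flux of $u^o$ through $\epsilon\partial\Omega^i$ (the double layer potentials and $\mathfrak{u}^o$ carry zero flux there, $S_n$ carries flux one).

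This misplacement then breaks the outer argument as you set it up. With $\zeta$ already ``used up'', your $2\times2$ system has unknowns $(\phi^o,\phi^i)\in \COo\times\COi_0$ and data in $\COo\times C^{1,\alpha}(\epsilon\partial\Omega^i)$, and the second diagonal operator $-\tfrac12 I+W_{\partial\Omega^i}$ restricted to $\COi_0$ is injective but has range of codimension one; it is \emph{not} ``bijective modulo constants'' onto the target, so the coupled system is Fredholm of index $-1$, not $0$, and is not surjective — only right-hand sides satisfying a one-dimensional flux compatibility condition are attainable. The correct repair is precisely Lemma \ref{isolem}: keep $(\phi^i,\zeta)$ as a joint unknown, so that the diagonal block $(\phi^i,\zeta)\mapsto(-\tfrac12 I+W_{\partial\Omega^i})[\phi^i]+\zeta S_n|_{\partial\Omega^i}$ is an isomorphism of $\COi_0\times\R$ onto $\COi$; together with $\tfrac12 I+W_{\partial\Omega^o}$ on the first component and the smoothing (hence compact) off-diagonal terms coming from the disjointness of $\partial\Omega^o$ and $\epsilon\partial\Omega^i$, one then gets a compact perturbation of an isomorphism, and injectivity (uniqueness of the splitting of a harmonic function on $\Omega(\epsilon)$ into an interior part and a decaying exterior part, via Liouville) yields bijectivity — this is exactly the structure used in Proposition \ref{propAoperator} and in the cited proof of \cite[Prop.~5.1]{Mo18}. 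Also note that for fixed $\epsilon\in]0,\epsilon_0[$ there is no smallness, so the alternative Neumann-series suggestion is not available; the Fredholm route with the corrected spaces is the one that works.
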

We recall that $\COi_0$ is the subspace of $\COi$ consisting of the functions with zero integral mean on $\partial\Omega^i$ (cf.~\eqref{Z0}).
The following Lemma \ref{isolem} provides an isomorphism between $\COi_0 \times \R$ and $\COi$ (cf. \cite[Lemma 4.2]{Mo18}).
\begin{lemma}\label{isolem}
	The map from $\COi_0 \times \R$ to $\COi$ which takes $(\mu,\xi)$ to the function
	\begin{equation*}
	J[\mu,\xi] \equiv \left(-\frac{1}{2}I + W_{\partial\Omega^i}\right) [\mu] +\xi\,{S_n}_{|\partial \Omega^i} 
	\end{equation*}
	is an isomorphism.
\end{lemma}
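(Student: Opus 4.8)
The plan is to prove that $J$ is a continuous bijection between Banach spaces and then invoke the open mapping theorem. Continuity is immediate, since $W_{\partial\Omega^i}$ is continuous on $\COi$ by Theorem \ref{doublepot}(iii) and ${S_n}_{|\partial\Omega^i}\in\COi$ (the restriction makes sense because $0\in\Omega^i$, so $0\notin\partial\Omega^i$). So everything reduces to bijectivity, which I would deduce from the jump relations of Theorem \ref{doublepot}(i), from classical uniqueness results for harmonic functions, and from the classical Fredholm theory of the Laplace double layer operator on Schauder spaces.

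\emph{Injectivity.} Suppose $J[\mu,\xi]=0$ and set $u\equiv w^-_{\Omega^i}[\mu]+\xi\,S_n$ on $\R^n\setminus\overline{\Omega^i}$; this function is harmonic there (the pole of $S_n$ is the origin, which lies in $\Omega^i$), harmonic at infinity, and its trace on $\partial\Omega^i$ is $J[\mu,\xi]=0$, so uniqueness for the exterior Dirichlet problem (here $n\ge3$ is used) forces $u\equiv0$. Taking normal derivatives on $\partial\Omega^i$ and using the classical continuity of the normal derivative of a double layer potential across the boundary gives $\nu_{\Omega^i}\cdot\nabla w^+_{\Omega^i}[\mu]=-\xi\,\nu_{\Omega^i}\cdot\nabla S_n$ on $\partial\Omega^i$; integrating over $\partial\Omega^i$, the left-hand side vanishes by harmonicity of $w^+_{\Omega^i}[\mu]$ in $\Omega^i$ while $\int_{\partial\Omega^i}\nu_{\Omega^i}\cdot\nabla S_n\,d\sigma=1$, whence $\xi=0$. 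Then $w^-_{\Omega^i}[\mu]$ vanishes on $\partial\Omega^i$ and at infinity, so $w^-_{\Omega^i}[\mu]\equiv0$ outside $\overline{\Omega^i}$, the normal derivative of $w^+_{\Omega^i}[\mu]$ vanishes on $\partial\Omega^i$, and, $\Omega^i$ being connected, $w^+_{\Omega^i}[\mu]\equiv c$ for some constant $c$; subtracting the two jump relations of Theorem \ref{doublepot}(i) gives $\mu=(\frac{1}{2}I+W_{\partial\Omega^i})[\mu]-(-\frac{1}{2}I+W_{\partial\Omega^i})[\mu]=c-0=c$, and $\mu\in\COi_0$ forces $c=0$.

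\emph{Surjectivity.} Here I would use the classical description of $-\frac{1}{2}I+W_{\partial\Omega^i}$ acting on $\COi$: its kernel is the line of constant functions, and its range is the closed hyperplane $R\equiv\{g\in\COi:\int_{\partial\Omega^i}g\,\rho^*\,d\sigma=0\}$, where $\rho^*$ is the equilibrium density of $\partial\Omega^i$ (normalised by $\int_{\partial\Omega^i}\rho^*\,d\sigma=1$, with single layer potential constant on $\overline{\Omega^i}$). Since $\COi=\COi_0\oplus\R$ and $-\frac{1}{2}I+W_{\partial\Omega^i}$ annihilates the constants, it restricts to a bicontinuous bijection $\COi_0\to R$. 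Moreover $\int_{\partial\Omega^i}{S_n}_{|\partial\Omega^i}\,\rho^*\,d\sigma$ equals the value at the origin of the equilibrium single layer potential, which is a nonzero constant, so ${S_n}_{|\partial\Omega^i}\notin R$; as $R$ has codimension one, $\COi=R\oplus\R\,{S_n}_{|\partial\Omega^i}$. Hence, for $f\in\COi$, there is a unique $\xi\in\R$ with $f-\xi\,{S_n}_{|\partial\Omega^i}\in R$ and then a unique $\mu\in\COi_0$ with $(-\frac{1}{2}I+W_{\partial\Omega^i})[\mu]=f-\xi\,{S_n}_{|\partial\Omega^i}$, so $J[\mu,\xi]=f$.

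\emph{Main obstacle and conclusion.} The delicate point is the surjectivity, and specifically the two classical structural facts on $-\frac{1}{2}I+W_{\partial\Omega^i}$: that it has closed range of codimension one, and that this range is cut out by the equilibrium density — equivalently, that every function harmonic in $\R^n\setminus\overline{\Omega^i}$, of class $C^{1,\alpha}$ up to $\partial\Omega^i$, harmonic at infinity and with vanishing total flux across $\partial\Omega^i$, is the exterior double layer potential of a density in $\COi$. This is the ``$-\frac{1}{2}$'' analogue of Theorem \ref{doublepot}(iv) and rests on the Fredholm theory of the Laplace double layer operator, for which I would appeal to the classical references of Section \ref{classresult}; an equivalent, more hands-on route to surjectivity is to solve the exterior Dirichlet problem with datum $f$, subtract the multiple of $S_n$ that annihilates the total flux through $\partial\Omega^i$, and represent the resulting zero-flux exterior harmonic function as $w^-_{\Omega^i}[\mu_0]$ before passing to the zero-mean part of $\mu_0$. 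Once bijectivity is settled, $J$ is linear, continuous and bijective between Banach spaces, so $J^{(-1)}$ is continuous by the open mapping theorem and $J$ is an isomorphism.
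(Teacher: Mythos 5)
Your argument is correct, but note that this paper does not actually prove Lemma \ref{isolem}: it is quoted from \cite[Lemma 4.2]{Mo18}, so there is no in-paper proof to match against. Your injectivity argument (exterior Dirichlet uniqueness for $w^-_{\Omega^i}[\mu]+\xi S_n$, the flux computation $\int_{\partial\Omega^i}\nu_{\Omega^i}\cdot\nabla S_n\,d\sigma=1$ using $0\in\Omega^i$ to force $\xi=0$, then the Lyapunov--Tauber continuity of the normal derivative, the interior Neumann problem, the jump relation $w^+-w^-=\mu$ and the zero-mean condition to force $\mu=0$) is complete and is essentially the standard one in this literature. For surjectivity you identify the range of $-\frac{1}{2}I+W_{\partial\Omega^i}$ explicitly as the annihilator of the equilibrium density and then check that ${S_n}_{|\partial\Omega^i}$ pairs nontrivially with it (indeed $\int_{\partial\Omega^i}S_n\rho^*\,d\sigma$ is the value at $0$ of the equilibrium potential, which for $n\ge3$ is a nonzero constant with the paper's normalization); this is correct, though it leans on classical facts not contained in Theorem \ref{doublepot} as stated — the Fredholm alternative for the double layer operator on Schauder spaces over a $C^{1,\alpha}$ boundary, the characterization of $\ker\left(-\frac{1}{2}I+W^{*}_{\partial\Omega^i}\right)$ by the equilibrium density, and the Lyapunov--Tauber theorem — all available in the classical references of Section \ref{classresult}, but worth citing explicitly. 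A shorter route, common in the Lanza de Cristoforis school and closer in spirit to how such lemmas are handled in \cite{Mo18,DaMi15}, is to observe that $J$ is (a restriction plus a rank-one term of) an index-zero Fredholm operator, so that your injectivity argument alone already yields bijectivity, after which the Open Mapping Theorem concludes exactly as you say; your explicit description of the range buys a constructive formula for $\xi$ (a flux/capacitary coefficient), at the price of invoking more of the classical Fredholm structure.
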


Let $F$ be as in \eqref{FGf^o}. We indicate by $\partial_\epsilon F$ and $\partial_\zeta F$ the partial derivative of $F$ with respect to the first the last argument, respectively.
We shall exploit the following assumptions:
\begin{equation}\label{zetaicond}
	\begin{split}
	&\text{There exist $\zeta^i \in \R$ such that $F(0,\cdot,\zeta^i) = \mathfrak{u}^o(0)$}\\
	&\text{and $(\partial_\zeta F)(0,\cdot,\zeta^i)$ is constant and positive.}
	\end{split}
\end{equation}
and
\begin{equation}\label{F1}
	\begin{split}
		&\mbox{For each $t\in\partial\Omega^i$ fixed, the map from $]-\epsilon_0,\epsilon_0[ \times \R$ to $\R$}
		\\
		&\mbox{which takes  $(\epsilon,\zeta)$  to  $F(\epsilon,t,\zeta)$ is of class $C^2$.}  
	\end{split}
\end{equation}
Then we have the following Lemma \ref{-Taylem}, concerning the Taylor expansion of the functions $\mathfrak{u}^o$ and $F$ (cf. \cite[Lemmas 5.2, 5.3]{Mo18}).
\begin{lemma}\label{-Taylem}
	Let \eqref{zetaicond} and \eqref{F1} hold true. Let $a,b \in \R$. Then
	\begin{equation*}
	\begin{split}
	&F(\epsilon,t,a+\epsilon b)\\
	&\quad = F(0,t,a) + \epsilon (\partial_\epsilon F) (0,t,a) + \epsilon b (\partial_\zeta F) (0,t,a) + \epsilon^2 \tilde{F}(\epsilon,t,a,b),
	\end{split}
	\end{equation*}
	for all $(\epsilon,t) \in ]-\epsilon_0,\epsilon_0[ \times \partial\Omega^i$, where 
	\begin{equation*}
	\begin{split}
	&\tilde{F}(\epsilon,t,a,b) \equiv \int_{0}^{1}  (1-\tau) \{ (\partial^2_\epsilon F)(\tau\epsilon,t,a + \tau\epsilon b)\\
	&\qquad +  2b(\partial_\epsilon \partial_\zeta F)(\tau\epsilon,t,a + \tau\epsilon b)  + b^2(\partial^2_\zeta F)(\tau\epsilon,t,a + \tau\epsilon b) \} \,d\tau .
	\end{split}	
	\end{equation*}
	Moreover 
	\begin{equation*}
	\mathfrak{u}^o(\epsilon t)-F(0,t,\zeta^i)=\epsilon\, t\cdot\nabla \mathfrak{u}^o(0)+\epsilon^2\,\tilde{\mathfrak{u}}^o(\epsilon,t)
	\end{equation*}
	for all $(\epsilon,t) \in ]-\epsilon_0,\epsilon_0[ \times \partial\Omega^i$, where 
	\begin{equation*}
	\tilde{\mathfrak{u}}^o(\epsilon,t) \equiv\int_0^1(1-\tau)\sum_{i,j=1}^nt_i\,t_j\, (\partial_{x_i}\partial_{x_j} \mathfrak{u}^o)(\tau\epsilon t)\,d\tau\,.
	\end{equation*}
\end{lemma}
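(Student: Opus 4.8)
\emph{Sketch of proof.} Both identities are one-dimensional Taylor expansions with integral remainder, carried along an affine path and then written out by the chain rule. The plan is to treat the two statements separately, reducing each to a computation for an auxiliary function $g$ (resp.\ $h$) of a single real variable $\tau\in[0,1]$ of class $C^2$, and then invoking the formula $g(1)=g(0)+g'(0)+\int_0^1(1-\tau)g''(\tau)\,d\tau$.

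For the expansion of $F$, I would fix $t\in\partial\Omega^i$ and $a,b\in\R$ and, for $\epsilon\in]-\epsilon_0,\epsilon_0[$, set $g(\tau)\equiv F(\tau\epsilon,t,a+\tau\epsilon b)$ for $\tau\in[0,1]$. Since $|\tau\epsilon|\le|\epsilon|<\epsilon_0$, the path $\tau\mapsto(\tau\epsilon,a+\tau\epsilon b)$ takes values in the domain of $(\epsilon,\zeta)\mapsto F(\epsilon,t,\zeta)$, which by \eqref{F1} is of class $C^2$; hence $g\in C^2([0,1])$. Then by the chain rule $g'(\tau)=\epsilon(\partial_\epsilon F)(\tau\epsilon,t,a+\tau\epsilon b)+\epsilon b(\partial_\zeta F)(\tau\epsilon,t,a+\tau\epsilon b)$ and, using the equality of the mixed second-order partial derivatives of the $C^2$ map $F(\cdot,t,\cdot)$, $g''(\tau)=\epsilon^2[(\partial^2_\epsilon F)(\tau\epsilon,t,a+\tau\epsilon b)+2b(\partial_\epsilon\partial_\zeta F)(\tau\epsilon,t,a+\tau\epsilon b)+b^2(\partial^2_\zeta F)(\tau\epsilon,t,a+\tau\epsilon b)]$. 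Substituting $g(0)=F(0,t,a)$ and $g'(0)=\epsilon(\partial_\epsilon F)(0,t,a)+\epsilon b(\partial_\zeta F)(0,t,a)$ into the Taylor identity yields exactly the stated expansion, with $\tilde{F}$ as the integral remainder; continuity of $g''$ in $\tau$ guarantees that the integral defining $\tilde{F}$ is well posed.

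For the expansion of $\mathfrak{u}^o$, I would first record the geometric fact that for every $(\epsilon,t)\in]-\epsilon_0,\epsilon_0[\times\partial\Omega^i$ the segment $\{\tau\epsilon t:\tau\in[0,1]\}$ is contained in $\Omega^o$: indeed $|\tau\epsilon|<\epsilon_0$ and $t\in\partial\Omega^i\subseteq\overline{\Omega^i}$, so $\tau\epsilon t\in(\tau\epsilon)\overline{\Omega^i}\subseteq\Omega^o$ by the very definition of $\epsilon_0$. Since $\mathfrak{u}^o$ is harmonic in $\Omega^o$, it is of class $C^\infty$ (indeed real analytic) on a neighborhood of this segment, so $h(\tau)\equiv\mathfrak{u}^o(\tau\epsilon t)$ belongs to $C^2([0,1])$ with $h'(\tau)=\epsilon\,t\cdot\nabla\mathfrak{u}^o(\tau\epsilon t)$ and $h''(\tau)=\epsilon^2\sum_{i,j=1}^n t_i t_j(\partial_{x_i}\partial_{x_j}\mathfrak{u}^o)(\tau\epsilon t)$. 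The Taylor identity then gives $\mathfrak{u}^o(\epsilon t)=\mathfrak{u}^o(0)+\epsilon\,t\cdot\nabla\mathfrak{u}^o(0)+\epsilon^2\tilde{\mathfrak{u}}^o(\epsilon,t)$ with $\tilde{\mathfrak{u}}^o$ as in the statement, and I would conclude by invoking \eqref{zetaicond}, which gives $F(0,t,\zeta^i)=\mathfrak{u}^o(0)$ for every $t$, to rewrite the constant term.

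There is no genuine obstacle here; the argument is a routine double application of Taylor's formula. The only points that deserve a line of care are the two membership checks ensuring the auxiliary paths lie in the correct domains — the elementary inclusion $\tau\epsilon\in]-\epsilon_0,\epsilon_0[$ for $g$, and the inclusion of the segment in $\Omega^o$ for $h$ — together with the observation that the hypotheses actually used are precisely \eqref{F1} (to obtain $F(\cdot,t,\cdot)\in C^2$, hence both the symmetry of the mixed partials and the well-posedness of $\tilde{F}$) and \eqref{zetaicond} (used only to identify $\mathfrak{u}^o(0)$ with $F(0,t,\zeta^i)$). If one wanted a longer expansion, the only change needed would be a correspondingly higher-order regularity assumption on $F$.
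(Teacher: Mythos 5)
Your proof is correct and is essentially the same argument as the one the paper relies on (the statement is quoted from \cite[Lemmas 5.2, 5.3]{Mo18}, whose proof is exactly this second-order Taylor formula with integral remainder applied along the affine path for $F$ and along the segment $\tau\mapsto\tau\epsilon t$ for $\mathfrak{u}^o$, using \eqref{zetaicond} only to replace $\mathfrak{u}^o(0)$ by $F(0,t,\zeta^i)$). Your two membership checks (that $\tau\epsilon\in]-\epsilon_0,\epsilon_0[$ and that the segment $\{\tau\epsilon t:\tau\in[0,1]\}$ lies in $\Omega^o$ by the definition of $\epsilon_0$) are precisely the points that need care, and you handled them correctly.
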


Then we introduce a notation for the superposition operators. 
\begin{defin}\label{Nemytskii}
	If $H$ is a function from  $]-\epsilon_0,\epsilon_0[ \times \partial \Omega ^i \times \R$ to $\R$, then we denote by $\mathcal{N}_H$ the (nonlinear nonautonomous) superposition operator which takes a pair $(\epsilon,v)$ consisting of  a real number  $\epsilon\in ]-\epsilon_0,\epsilon_0[$ and of a function $v$ from $\partial\Omega^i$ to $\R$ to the function $\mathcal{N}_H(\epsilon,v)$ defined by
	\[
	\mathcal{N}_H(\epsilon,v) (t) \equiv H(\epsilon,t,v(t))\qquad\forall t\in\partial\Omega^i\,.
	\]
\end{defin} 
Here the letter ``$\mathcal{N}$" stands for ``Nemytskii operator". Having introduced Definition \ref{Nemytskii}, we can now formulate the following assumption on the funtions $F$ and $G$ of \eqref{FGf^o}:
\begin{equation}\label{realanalhp}
\begin{split}
&\text{For all $(\epsilon,v)\in]-\epsilon_0,\epsilon_0[\times \COi$ we have $\mathcal{N}_F(\epsilon,v)\in\COi$}\\
&\text{and $\mathcal{N}_G(\epsilon,v)\in C^{0,\alpha}(\partial\Omega^i)$. Moreover, the superposition operator $\mathcal{N}_F$ is real}\\
&\text{analytic from $ ]-\epsilon_0,\epsilon_0[ \times \COi$ to $\COi$ and the superposition}\\
&\text{operator $\mathcal{N}_G$ is real analytic from $ ]-\epsilon_0,\epsilon_0[ \times \COi$ to $C^{0,\alpha}(\partial\Omega^i)$.}
\end{split}
\end{equation}
Then, for real analytic superposition operators we have the following Proposition \ref{differenzialeN_H} (cf.~ Lanza de Cristoforis \cite[Prop 5.3]{La07}).

\begin{prop}\label{differenzialeN_H}
	If $H$ is a function from $]-\epsilon_0,\epsilon_0[ \times \partial \Omega ^i \times \R$ to $\R$ such that the superposition operator $\mathcal{N}_H$ is real analytic from $ ]-\epsilon_0,\epsilon_0[ \times \COi$ to $\COi$, then  the partial differential of $\mathcal{N}_H$ with respect to the second variable $v$, computed at the point $(\epsilon,\overline{v}) \in ]-\epsilon_0,\epsilon_0[ \times C^{1,\alpha}(\partial\Omega^i)$, is the linear operator $d_v \mathcal{N}_H (\epsilon, \overline{v})$ defined by
	\begin{equation}\label{d_vN_Hformula}
	d_v \mathcal{N}_H (\epsilon, \overline{v}). \tilde{v} = \mathcal{N}_{(\partial_\zeta H)} (\epsilon,\overline{v}) \tilde{v} \qquad \forall \tilde{v} \in \COi. 
	\end{equation}
	The same result holds replacing the domain  and the target space of the operator $\mathcal{N}_H$ with $]-\epsilon_0,\epsilon_0[ \times C^{0,\alpha}(\partial\Omega^i)$ and $C^{0,\alpha}(\partial\Omega^i)$, respectively, and using functions $\overline{v},\tilde{v} \in C^{0,\alpha}(\partial\Omega^i)$ in \eqref{d_vN_Hformula}. 
\end{prop}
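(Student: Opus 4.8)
The plan is to exploit that a real analytic map between open subsets of Banach spaces is in particular Fr\'echet differentiable, so that the partial differential $d_v\mathcal{N}_H(\epsilon,\overline v)$ exists as an element of $\mathcal{L}(\COi,\COi)$ and, for every direction $\tilde v\in\COi$, its value coincides with the G\^ateaux derivative
\[
d_v\mathcal{N}_H(\epsilon,\overline v).\tilde v=\lim_{s\to 0}\frac{1}{s}\bigl(\mathcal{N}_H(\epsilon,\overline v+s\tilde v)-\mathcal{N}_H(\epsilon,\overline v)\bigr),
\]
the limit being taken in $\COi$. Since the evaluation functionals $\phi\mapsto\phi(t)$ are linear and continuous on $\COi$, convergence in $\COi$ implies pointwise convergence on $\partial\Omega^i$, and hence for every $t\in\partial\Omega^i$
\[
\bigl(d_v\mathcal{N}_H(\epsilon,\overline v).\tilde v\bigr)(t)=\lim_{s\to 0}\frac{H(\epsilon,t,\overline v(t)+s\tilde v(t))-H(\epsilon,t,\overline v(t))}{s}.
\]

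Next I would check that $\partial_\zeta H$ is well defined at every point of $]-\epsilon_0,\epsilon_0[\times\partial\Omega^i\times\R$. To this end, fix $t_0\in\partial\Omega^i$ and let $\iota$ denote the map from $\R$ to $\COi$ that sends a real number to the corresponding constant function, which is linear and continuous. Then $(\epsilon,c)\mapsto\mathcal{N}_H(\epsilon,\iota(c))$ is real analytic from $]-\epsilon_0,\epsilon_0[\times\R$ to $\COi$, being the composition of $\mathcal{N}_H$ with a real analytic map, and composing further with the (linear and continuous) evaluation functional $\phi\mapsto\phi(t_0)$ we obtain that $(\epsilon,c)\mapsto H(\epsilon,t_0,c)$ is real analytic from $]-\epsilon_0,\epsilon_0[\times\R$ to $\R$. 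In particular it is of class $C^1$, so $(\partial_\zeta H)(\epsilon,t_0,c)$ exists for all $(\epsilon,c)$, and since $t_0$ is arbitrary $\partial_\zeta H$ is defined everywhere on $]-\epsilon_0,\epsilon_0[\times\partial\Omega^i\times\R$.

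Then I would identify the pointwise limit in the second display above. Fix $t\in\partial\Omega^i$. If $\tilde v(t)\neq 0$, writing $h\equiv s\tilde v(t)$ we have
\[
\frac{H(\epsilon,t,\overline v(t)+s\tilde v(t))-H(\epsilon,t,\overline v(t))}{s}=\tilde v(t)\,\frac{H(\epsilon,t,\overline v(t)+h)-H(\epsilon,t,\overline v(t))}{h},
\]
and, since $h\to 0$ as $s\to 0$, the right-hand side tends to $(\partial_\zeta H)(\epsilon,t,\overline v(t))\,\tilde v(t)$; if instead $\tilde v(t)=0$, the quotient is identically $0=(\partial_\zeta H)(\epsilon,t,\overline v(t))\,\tilde v(t)$. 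In both cases the pointwise limit equals $\mathcal{N}_{(\partial_\zeta H)}(\epsilon,\overline v)(t)\,\tilde v(t)$. Hence the functions $d_v\mathcal{N}_H(\epsilon,\overline v).\tilde v$ and $\mathcal{N}_{(\partial_\zeta H)}(\epsilon,\overline v)\,\tilde v$ agree at every point of $\partial\Omega^i$, and therefore coincide as functions; in particular $\mathcal{N}_{(\partial_\zeta H)}(\epsilon,\overline v)\,\tilde v\in\COi$, and \eqref{d_vN_Hformula} follows. The final assertion of the proposition is obtained by repeating the argument verbatim with $\COi$ replaced by $C^{0,\alpha}(\partial\Omega^i)$, using once more that convergence in $C^{0,\alpha}(\partial\Omega^i)$ forces pointwise convergence and that point evaluations are continuous on $C^{0,\alpha}(\partial\Omega^i)$.

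I expect the only delicate step to be the middle one: the very statement of the proposition presupposes the existence of $\partial_\zeta H$, and the natural way to deduce it from the mere real analyticity of $\mathcal{N}_H$ is to restrict $\mathcal{N}_H$ to the one-dimensional subspace of constant densities and then compose with a point evaluation, as above. Everything else is a routine combination of the identity ``Fr\'echet derivative equals directional derivative'', the fact that norm convergence in a Schauder space implies pointwise convergence, and an elementary one-variable difference-quotient limit.
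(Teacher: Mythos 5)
Your argument is correct. Note that the paper itself offers no proof of this proposition---it simply cites Lanza de Cristoforis \cite[Prop.~5.3]{La07}---so there is nothing internal to compare against; your self-contained argument (Fr\'echet differentiability from real analyticity, reduction of the directional derivative to pointwise difference quotients via continuity of point evaluations, and, crucially, the existence of $\partial_\zeta H$ obtained by restricting $\mathcal{N}_H$ to constant densities and composing with an evaluation functional) is sound, handles the $\tilde v(t)=0$ case properly, and carries over verbatim to the $C^{0,\alpha}(\partial\Omega^i)$ setting as you indicate.
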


In what follows we will exploit an auxiliary map $M=(M_1,M_2,M_3)$ from $]-\epsilon_0,\epsilon_0[  \times \COo \times \COi_0 \times \R \times \COi$ to $\COo \times \COi \times C^{0,\alpha}(\partial\Omega^i)$ defined by 
\begin{align*}
&M_1[\epsilon, \phi^o,\phi^i, \zeta, \psi^i](x) \equiv  \left(\frac{1}{2}I + W_{\partial\Omega^o}\right)[\phi^o](x) \nonumber
\\ &\quad -  \epsilon^{n-1} \int_{\partial\Omega^i}{\nu_{\Omega^i}(y) \cdot \nabla S_n(x-\epsilon y) \phi^i(y) \,d\sigma_y} +  \epsilon^{n-2} S_n(x) \zeta\qquad\quad \forall x \in \partial\Omega^o\,, 
\\
&M_2[\epsilon, \phi^o, \phi^i, \zeta, \psi^i](t) \equiv  \, t \cdot \nabla \mathfrak{u}^o(0) + \epsilon \tilde{\mathfrak{u}}^o(\epsilon, t) + \left(-\frac{1}{2}I + W_{\partial\Omega^i}\right)[\phi^i](t)  
\\
&\quad + \zeta\, S_n(t)  + w^+_{\Omega^o}[\phi^o](\epsilon t) - (\partial_\epsilon F) (0,t,\zeta^i) - (\partial_\zeta F) (0,t,\zeta^i) \nonumber
\\
&\quad \times \left(\frac{1}{2}I + W_{\partial\Omega^i}\right)[\psi^i](t) - \epsilon \tilde{F}\left(\epsilon,t,\zeta^i,\left(\frac{1}{2}I + W_{\partial\Omega^i}\right)[\psi^i](t)\right)\ \forall t \in \partial\Omega^i,  \nonumber
\\
&M_3[\epsilon, \phi^o,\phi^i, \zeta, \psi^i](t) \equiv  \, \nu_{\Omega^i}(t) \cdot \left( \nabla \mathfrak{u}^o(\epsilon t) + \epsilon \nabla w^+_{\Omega^o}[\phi^o](\epsilon t) + \nabla w^-_{\Omega^i}[\phi^i](t)  \right.
\\ 
&\quad \left. +  \nabla S_n(t) \zeta - \nabla w^+_{\Omega^i}[\psi^i](t) \right) - G \left(\epsilon,t,\epsilon \left(\frac{1}{2}I + W_{\partial\Omega^i}\right)[\psi^i](t) + \zeta^i \right)   \nonumber
\\ 
&\quad \qquad\qquad\qquad\qquad\qquad\qquad\qquad\qquad\qquad\quad\qquad\qquad\qquad\qquad \forall t \in \partial\Omega^i\,,
\end{align*}
for all $(\epsilon,  \phi^o, \phi^i, \zeta, \psi^i) \in ]-\epsilon_0,\epsilon_0[ \times \COo \times \COi_0 \times \R \times \COi$. 

In Theorem \ref{existenceThm} here below we summarize some results of \cite{Mo18} on the operator $M$ (cf. \cite[Prop. 7.1, 7.5, 7.6, Lem. 7.7, Thm. 7.8]{Mo18}).
\begin{teo}\label{existenceThm}
	Let assumptions \eqref{zetaicond}, \eqref{F1} and \eqref{realanalhp} hold. Then the following statement holds.
	\begin{enumerate}
		\item[(i)] The map $M$ is real analytic  from $]-\epsilon_0,\epsilon_0[  \times \COo \times \COi_0 \times \R \times \COi$ to $\COo \times \COi \times C^{0,\alpha}(\partial\Omega^i)$.
		
		\item[(ii)] Let $\epsilon \in ]0,\epsilon_0[$ and $(\phi^o,\phi^i,\zeta,\psi^i) \in \COo \times \COi_0 \times \R \times \COi$. Then the pair 
		\[
		(u^o_\epsilon[\phi^o,\phi^i,\zeta,\psi^i],u^i_\epsilon[\phi^o,\phi^i,\zeta,\psi^i])
		\]
		defined by \eqref{rappsol} is a solution of \eqref{princeq} if and only if 
		\begin{equation}\label{Me=0.e1}
		M[\epsilon,  \phi^o, \phi^i, \zeta, \psi^i]=(0,0,0)\,.
		\end{equation}
		
		\item[(iii)] The equation
		\begin{equation*}
		M[0,  \phi^o, \phi^i, \zeta, \psi^i]=(0,0,0)
		\end{equation*}
		has a unique solution $(\phi^o_0,\phi^i_0,\zeta_0,\psi^i_0) \in \COo \times \COi_0 \times \R \times \COi$.
		
		\item[(iv)] The partial differential of $M$ with respect to $(\phi^o, \phi^i, \zeta, \psi^i)$ evaluated at $(0,\phi^o_0, \phi^i_0, \zeta_0, \psi^i_0)$, which
		we denote by
		\begin{equation*}\label{dMiso.eq1}
		\partial_{(\phi^o,\phi^i,\zeta,\psi^i)} M[0,\phi^o_0, \phi^i_0, \zeta_0, \psi^i_0]\,,
		\end{equation*}
		is an isomorphism from $\COo \times \COi_0 \times \R \times \COi$ to $\COo \times \COi \times C^{0,\alpha}(\partial\Omega^i)$.
		
		\item[(v)] There exist $\epsilon' \in ]0,\epsilon_0[$, an open neighborhood $U_0$ of $(\phi^o_0, \phi^i_0, \zeta_0, \psi^i_0)$ in $\COo \times \COi_0 \times \R \times \COi$, and a real analytic map 
		\begin{equation*}
		(\Phi^o,\Phi^i,Z,\Psi^i):\;]-\epsilon',\epsilon'[\to U_0
		\end{equation*}
		such that  the set of zeros of $M$ in $]-\epsilon',\epsilon'[ \times U_0$ coincides with the graph of $(\Phi^o[\cdot],\Phi^i[\cdot],Z[\cdot],\Psi^i[\cdot])$. In particular,
		\begin{equation*}
		(\Phi^o[0],\Phi^i[0],Z[0],\Psi^i[0]) = (\phi^o_0, \phi^i_0, \zeta_0, \psi^i_0).
		\end{equation*}
	\end{enumerate}
\end{teo}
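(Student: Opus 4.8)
The plan is to prove the five assertions in turn: (i) and (ii) by a direct computation based on the representation \eqref{rappsol} and on the potential theory recalled in Section~\ref{classresult}; (iii) and (iv) by linearizing $M$ at the limiting point $(0,\phi^o_0,\phi^i_0,\zeta_0,\psi^i_0)$; and (v) as an application of the real analytic implicit function theorem. For (i) I would write $M$ as a composition of elementary maps and invoke the product and composition theorems in Schauder spaces collected in the Appendix. The operators $W_{\partial\Omega^o}$, $W_{\partial\Omega^i}$, $\mu\mapsto w^+_{\Omega^o}[\mu]$, $\mu\mapsto w^\pm_{\Omega^i}[\mu]$ and their normal derivatives are linear and continuous (Theorem~\ref{doublepot}), hence real analytic; the kernel $(\epsilon,x,y)\mapsto\nu_{\Omega^i}(y)\cdot\nabla S_n(x-\epsilon y)$ occurring in $M_1$ is real analytic on $]-\epsilon_0,\epsilon_0[\times\partial\Omega^o\times\partial\Omega^i$ because $x\neq\epsilon y$ there, and so are $S_n$ restricted to $\partial\Omega^o$ or to $\partial\Omega^i$; the functions $\mathfrak u^o$, $\tilde{\mathfrak u}^o$ and $\tilde F$ are real analytic in $(\epsilon,t)$ by Lemma~\ref{-Taylem}; the superposition operators $\mathcal N_F$, $\mathcal N_G$, $\mathcal N_{\tilde F}$ are real analytic by \eqref{realanalhp} and Lemma~\ref{-Taylem}; and the dilations $t\mapsto\epsilon t$ compose real analytically. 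Assembling these pieces with the sum, product and composition theorems gives (i).

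For (ii) I would insert the pair \eqref{rappsol} into \eqref{princeq}. The two Laplace equations hold automatically. On $\partial\Omega^o$ one uses $\mathfrak u^o=f^o$, the jump formula of Theorem~\ref{doublepot}(i) for $w^+_{\Omega^o}[\phi^o]$, and the scaling identity $w_{\epsilon\Omega^i}[\phi^i(\cdot/\epsilon)](x)=w_{\Omega^i}[\phi^i](x/\epsilon)$ (obtained via the change of variable $y=\epsilon\eta$ and the homogeneity of $\nabla S_n$); dividing by $\epsilon$ turns the Dirichlet condition into $M_1=0$. On $\epsilon\partial\Omega^i$ I would set $x=\epsilon t$, note that $u^i(\epsilon t)=\zeta^i+\epsilon(\frac{1}{2} I+W_{\partial\Omega^i})[\psi^i](t)$ by the jump formula, expand $F(\epsilon,t,u^i(\epsilon t))$ and $\mathfrak u^o(\epsilon t)-F(0,t,\zeta^i)$ by Lemma~\ref{-Taylem} (using \eqref{zetaicond} to identify $F(0,t,\zeta^i)=\mathfrak u^o(0)$), and then, after dividing by $\epsilon$ and using the jump relations for $w^\pm_{\Omega^i}$, the fourth condition becomes $M_2=0$. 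For the fifth condition I would differentiate the terms of \eqref{rappsol}; the $\epsilon$-prefactors cancel the $\epsilon^{-1}$ produced by the chain rule, and $\epsilon^{n-1}\nabla S_n(\epsilon t)=\nabla S_n(t)$ by homogeneity, so no residual power of $\epsilon$ survives and the condition becomes exactly $M_3=0$. Reading the computation in reverse gives the converse, and the asserted equivalence then follows from the bijectivity in Proposition~\ref{rappresentsol}.

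For (iii) and (iv), which I would prove together, the key observation is that, since $n\geq3$, every term of $M$ carrying a positive power of $\epsilon$ vanishes at $\epsilon=0$, and the remaining ones are at most affine in $(\phi^o,\phi^i,\zeta,\psi^i)$ (in particular the contributions of $\mathcal N_F$, $\mathcal N_G$, $\mathcal N_{\tilde F}$ either carry such a prefactor or reduce to a fixed function, cf.\ Proposition~\ref{differenzialeN_H} for the differentiation rule); hence $M[0,\cdot]$ is affine, and to get both (iii) and (iv) it suffices to show that its linear part $L\equiv\partial_{(\phi^o,\phi^i,\zeta,\psi^i)}M[0,\phi^o_0,\phi^i_0,\zeta_0,\psi^i_0]$ is an isomorphism. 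The operator $L$ is essentially triangular: its first component $\phi^o\mapsto(\frac{1}{2} I+W_{\partial\Omega^o})[\phi^o]$ is an isomorphism of $\COo$ by Theorem~\ref{doublepot}(iv) (here one uses that $\R^n\setminus\overline{\Omega^o}$ is connected), so $\phi^o$ is determined; with $\phi^o$ fixed the second component reads $w^+_{\Omega^o}[\phi^o](0)+J[\phi^i,\zeta]-(\partial_\zeta F)(0,\cdot,\zeta^i)\,(\frac{1}{2} I+W_{\partial\Omega^i})[\psi^i]$, with $J$ the isomorphism of Lemma~\ref{isolem}, so $(\phi^i,\zeta)$ becomes an affine function of $\psi^i$; substituting into the third component $\nu_{\Omega^i}\cdot(\nabla w^-_{\Omega^i}[\phi^i]+\zeta\nabla S_n-\nabla w^+_{\Omega^i}[\psi^i])$ leaves a single linear equation for $\psi^i\in\COi$ with values in $C^{0,\alpha}(\partial\Omega^i)$, governed by the normal derivative of the double layer potential. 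I expect this last step to be the main obstacle. Surjectivity I would obtain from Fredholm theory, writing the remaining operator as a compact perturbation of an isomorphism between the relevant Schauder spaces; injectivity I would prove by showing that a quadruple in $\ker L$ produces, via \eqref{rappsol} read in the limit $\epsilon\to0$, a pair of harmonic functions $V^o$ on $\R^n\setminus\overline{\Omega^i}$ (tending to $0$ at infinity) and $V^i$ on $\Omega^i$ solving the homogeneous linear transmission problem $V^o=V^i$ and $\nu_{\Omega^i}\cdot\nabla V^o=(\partial_\zeta F)(0,\cdot,\zeta^i)^{-1}\,\nu_{\Omega^i}\cdot\nabla V^i$ on $\partial\Omega^i$. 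An integration by parts, together with the positivity of the constant $(\partial_\zeta F)(0,\cdot,\zeta^i)$ from \eqref{zetaicond} and the connectedness of $\Omega^i$ and of its exterior, forces $V^o\equiv V^i\equiv0$; then $\psi^i=0$ by Theorem~\ref{doublepot}(iv), $(\phi^i,\zeta)=0$ by Lemma~\ref{isolem} applied to the exterior trace of $V^o$ on $\partial\Omega^i$, and $\phi^o=0$ was already known. This shows $L$ is an isomorphism, hence (iii) and (iv).

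Finally, (v) is immediate from the real analytic implicit function theorem applied to $M$ at $(0,\phi^o_0,\phi^i_0,\zeta_0,\psi^i_0)$: by (i) $M$ is real analytic near this point, by (iii) it is a zero of $M$, and by (iv) the partial differential of $M$ with respect to $(\phi^o,\phi^i,\zeta,\psi^i)$ is an isomorphism there; hence there exist $\epsilon'\in]0,\epsilon_0[$, an open neighborhood $U_0$ of $(\phi^o_0,\phi^i_0,\zeta_0,\psi^i_0)$, and a real analytic map $(\Phi^o,\Phi^i,Z,\Psi^i)$ from $]-\epsilon',\epsilon'[$ to $U_0$ whose graph coincides with the zero set of $M$ in $]-\epsilon',\epsilon'[\times U_0$, and in particular $(\Phi^o[0],\Phi^i[0],Z[0],\Psi^i[0])=(\phi^o_0,\phi^i_0,\zeta_0,\psi^i_0)$.
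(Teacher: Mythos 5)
Your proposal is correct and follows essentially the same route as the paper, which does not prove Theorem \ref{existenceThm} in-text but quotes it from \cite{Mo18}: analyticity of the integral and superposition operators for (i), the rescaling/jump-relation computation for (ii), reduction of the affine system at $\epsilon=0$ to the invertibility of its linear part---settled via Theorem \ref{doublepot} (iv), Lemma \ref{isolem} and an energy argument for the limiting transmission problem with positive contrast $(\partial_\zeta F)(0,\cdot,\zeta^i)$---for (iii)--(iv), and the real analytic implicit function theorem for (v). The one step you leave as a claim, surjectivity of the reduced operator in $\psi^i$ in (iv), does go through (e.g.\ by solving the limiting linear transmission problem directly and re-representing its solution in the potential form of Proposition \ref{rappresentsol}), so there is no genuine gap.
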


Then, in view of Theorem \ref{existenceThm} (ii) and Theorem \ref{existenceThm} (v), we can  introduce a family of solutions $\{(u^o_\epsilon, u^i_\epsilon)\}_{\epsilon \in ]0,\epsilon'[}$ for problem \eqref{princeq}.

\begin{teo}\label{uesol}
	Let assumptions  \eqref{zetaicond}, \eqref{F1}, and \eqref{realanalhp} hold true. Let $\epsilon'$ and $(\Phi^o[\cdot],\Phi^i[\cdot],Z[\cdot],\Psi^i[\cdot])$ be as in Theorem \ref{existenceThm} (v). 
	For all $\epsilon\in]0,\epsilon'[$, let	
	\[
	\begin{aligned}
	& u^o_\epsilon(x) \equiv U^o_\epsilon[\Phi^o[\epsilon] , \Phi^i[\epsilon], Z[\epsilon] , \Psi^i[\epsilon]](x)&&\forall x \in \overline{\Omega(\epsilon)}\,,\\
	& u^i_\epsilon(x) \equiv U^i_\epsilon[\Phi^o[\epsilon] , \Phi^i[\epsilon], Z[\epsilon], \Psi^i[\epsilon]](x)	&&\forall x \in \epsilon\overline{\Omega^i}\,,
	\end{aligned}
	\]
	with $U^o_\epsilon[\cdot,\cdot,\cdot,\cdot]$ and $U^i_\epsilon[\cdot,\cdot,\cdot,\cdot]$  defined as in \eqref{rappsol}. Then the pair of functions $(u^o_\epsilon,u^i_\epsilon) \in C^{1,\alpha}_{\mathrm{harm}}(\overline{\Omega(\epsilon)}) \times \COeiharm$ is a solution of \eqref{princeq} for all $\epsilon \in ]0,\epsilon'[$.
\end{teo}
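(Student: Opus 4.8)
The plan is to obtain the statement as an immediate consequence of Proposition \ref{rappresentsol} and Theorem \ref{existenceThm}, so that essentially no new argument is required beyond careful bookkeeping of the function spaces involved. First I would fix $\epsilon\in]0,\epsilon'[$. By Theorem \ref{existenceThm} (v) the real analytic map $(\Phi^o[\cdot],\Phi^i[\cdot],Z[\cdot],\Psi^i[\cdot])$ takes values in $U_0\subseteq\COo \times \COi_0 \times \R \times \COi$, and its graph coincides with the set of zeros of $M$ in $]-\epsilon',\epsilon'[\times U_0$; in particular, since $\epsilon\in]0,\epsilon'[\subseteq]-\epsilon',\epsilon'[$, the quadruple $(\Phi^o[\epsilon],\Phi^i[\epsilon],Z[\epsilon],\Psi^i[\epsilon])$ belongs to $\COo \times \COi_0 \times \R \times \COi$ and satisfies
\[
M[\epsilon,\Phi^o[\epsilon],\Phi^i[\epsilon],Z[\epsilon],\Psi^i[\epsilon]]=(0,0,0)\,.
\]

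Next, because $(\Phi^o[\epsilon],\Phi^i[\epsilon],Z[\epsilon],\Psi^i[\epsilon])$ lies in $\COo \times \COi_0 \times \R \times \COi$, Proposition \ref{rappresentsol} ensures that inserting this quadruple into the representation formulas \eqref{rappsol} yields a well-defined pair $(u^o_\epsilon,u^i_\epsilon)$ that belongs to $C^{1,\alpha}_{\mathrm{harm}}(\overline{\Omega(\epsilon)}) \times \COeiharm$. This settles the regularity and harmonicity part of the conclusion.

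Finally, I would invoke Theorem \ref{existenceThm} (ii): for $\epsilon\in]0,\epsilon_0[$ and any $(\phi^o,\phi^i,\zeta,\psi^i)\in\COo \times \COi_0 \times \R \times \COi$, the pair $(U^o_\epsilon[\phi^o,\phi^i,\zeta,\psi^i],U^i_\epsilon[\phi^o,\phi^i,\zeta,\psi^i])$ defined by \eqref{rappsol} solves \eqref{princeq} if and only if $M[\epsilon,\phi^o,\phi^i,\zeta,\psi^i]=(0,0,0)$. Taking $(\phi^o,\phi^i,\zeta,\psi^i)=(\Phi^o[\epsilon],\Phi^i[\epsilon],Z[\epsilon],\Psi^i[\epsilon])$ and using the vanishing of $M$ established above, we conclude that $(u^o_\epsilon,u^i_\epsilon)$ is a solution of \eqref{princeq}. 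Since $\epsilon\in]0,\epsilon'[$ was arbitrary, the claim follows for every such $\epsilon$.

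I do not expect any genuine obstacle here: the theorem is a bookkeeping corollary assembling results already established in \cite{Mo18} (real analyticity of $M$, unique solvability of $M[0,\cdot]=0$, invertibility of the partial differential at $(0,\phi^o_0,\phi^i_0,\zeta_0,\psi^i_0)$, and the implicit function theorem). The only point that requires a moment of care is checking that the analytic branch $(\Phi^o[\epsilon],\Phi^i[\epsilon],Z[\epsilon],\Psi^i[\epsilon])$ indeed lands in $U_0$, hence in the common domain of both the map $M$ and the representation map $(U^o_\epsilon[\cdot,\cdot,\cdot,\cdot],U^i_\epsilon[\cdot,\cdot,\cdot,\cdot])$, so that Theorem \ref{existenceThm} (ii) and Proposition \ref{rappresentsol} are simultaneously applicable.
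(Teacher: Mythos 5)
Your proposal is correct and follows exactly the route the paper intends: the paper states Theorem \ref{uesol} without a separate proof, presenting it as an immediate consequence of Theorem \ref{existenceThm} (ii) and (v) together with the representation of Proposition \ref{rappresentsol}, which is precisely the assembly you carry out.
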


\section{Local uniqueness of the solution $(u^o_\epsilon,u^i_\epsilon)$}\label{localresult}

In this section we prove local uniqueness results for the family of solutions $\{(u^o_\epsilon, u^i_\epsilon)\}_{\epsilon \in ]0,\epsilon'[}$ of Theorem \ref{uesol}. We will denote by $\mathcal{B}_{0,r}$ the ball in the product space $\COo \times \COi_0 \times \R \times \COi$ of radius $r>0$ and centered in the $4$-tuple $(\phi^o_0, \phi^i_0, \zeta_0, \psi^i_0)$ of Theorem \ref{existenceThm} (iii). Namely, we set
\begin{equation}\label{B0r}
\begin{split}
&\mathcal{B}_{0,r}  \equiv \bigg\{(\phi^o, \phi^i, \zeta, \psi^i)\in \COo \times \COi_0 \times \R \times \COi\,:\\
&\ \|\phi^o-\phi^o_0\|_{\COo}+ \|\phi^i-\phi^i_0\|_{\COi}+ |\zeta-\zeta_0|+ \|\psi^i-\psi^i_0\|_{\COi}<r\bigg\}\end{split}
\end{equation}
for all $r>0$.
Then for $\epsilon'$ as in Theorem \ref{existenceThm} (v), we denote by $\Lambda=(\Lambda_1,\Lambda_2)$ the map from $]-\epsilon',\epsilon'[ \times \COo \times \COi_0 \times \R$ to $\COo \times \COi$ defined by
\begin{equation}\label{Aoperator}
\begin{aligned}
\Lambda_1[\epsilon,\phi^o,\phi^i,\zeta](x) &\equiv    \left( \frac{1}{2}I + W_{\partial\Omega^o}\right) [\phi^o](x)\\
& -  \epsilon^{n-1} \int_{\partial\Omega^i}{\nu_{\Omega^i}(y) \cdot \nabla S_n(x -\epsilon y) \phi^i(y) \,d\sigma_y} 
\\
& +  \epsilon^{n-2} S_n (x) \zeta && \qquad \forall x \in \partial \Omega^o,  
\\ 
\Lambda_2[\epsilon,\phi^o,\phi^i,\zeta](t)& \equiv 
\left(-\frac{1}{2}I + W_{\partial\Omega^i}\right) [\phi^i](t) +  w^+_{\Omega^o}[\phi^o](\epsilon t)\\
&
+ S_n(t) \zeta && \qquad\forall t \in \partial \Omega^i,  
\end{aligned}
\end{equation}
for all $(\epsilon,\phi^o,\phi^i,\zeta) \in ]-\epsilon',\epsilon'[ \times \COo \times \COi_0 \times \R$. We now prove the following.

\begin{prop}\label{propAoperator}
	There exist $\epsilon'' \in ]0,\epsilon'[$ and $C \in ]0,+\infty[$ such that the operator $\Lambda[\epsilon,\cdot,\cdot,\cdot]$ from $\COo \times \COi_0 \times \R$ to $\COo \times \COi$ is linear continuous and invertible for all $\epsilon \in ]-\epsilon'',\epsilon''[$ fixed and such that
	\begin{equation*}
	\|\Lambda[\epsilon,\cdot,\cdot,\cdot] ^ {(-1)}\|_{\mathcal{L}(\COo \times \COi,\COo \times \COi_0 \times \R)} \leq C
	\end{equation*}
	uniformly for $\epsilon \in ]-\epsilon'',\epsilon''[$.
\end{prop}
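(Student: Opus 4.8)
The plan is to let $\epsilon\to 0$: I will show that the limiting operator $\Lambda[0,\cdot,\cdot,\cdot]$ is a Banach space isomorphism, that $\epsilon\mapsto\Lambda[\epsilon,\cdot,\cdot,\cdot]$ is continuous from $]-\epsilon',\epsilon'[$ into $\mathcal{L}(\COo\times\COi_0\times\R,\COo\times\COi)$, and then conclude by the openness of the set of invertible operators together with a Neumann series argument, which produces a neighbourhood $]-\epsilon'',\epsilon''[$ of $0$ on which $\Lambda[\epsilon,\cdot,\cdot,\cdot]$ is invertible with uniformly bounded inverse. For the limiting operator, setting $\epsilon=0$ in the definition of $\Lambda$ and using $n\ge 3$ (so that $\epsilon^{n-1}$ and $\epsilon^{n-2}$ vanish) gives $\Lambda_1[0,\phi^o,\phi^i,\zeta]=(\frac12 I+W_{\partial\Omega^o})[\phi^o]$ and $\Lambda_2[0,\phi^o,\phi^i,\zeta]=(-\frac12 I+W_{\partial\Omega^i})[\phi^i]+w^+_{\Omega^o}[\phi^o](0)+\zeta\,{S_n}_{|\partial\Omega^i}$. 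This operator is lower block-triangular (with $(\phi^i,\zeta)$ grouped together) with invertible diagonal blocks: $\frac12 I+W_{\partial\Omega^o}$ is an isomorphism of $\COo$ by Theorem \ref{doublepot} (iv) (here we use that $\R^n\setminus\overline{\Omega^o}$ is connected), while $(\phi^i,\zeta)\mapsto(-\frac12 I+W_{\partial\Omega^i})[\phi^i]+\zeta\,{S_n}_{|\partial\Omega^i}=J[\phi^i,\zeta]$ is an isomorphism from $\COi_0\times\R$ onto $\COi$ by Lemma \ref{isolem}. Hence $\Lambda[0,\cdot,\cdot,\cdot]$ is bijective, its inverse being obtained by solving first $\phi^o=(\frac12 I+W_{\partial\Omega^o})^{(-1)}[\psi^o]$ and then $(\phi^i,\zeta)=J^{(-1)}\big[\psi^i-w^+_{\Omega^o}[\phi^o](0)\big]$; since evaluation at $0$ is continuous from $C^{1,\alpha}(\overline{\Omega^o})$ to $\R$ and $\phi^o\mapsto w^+_{\Omega^o}[\phi^o]$ is continuous from $\COo$ to $C^{1,\alpha}(\overline{\Omega^o})$ by Theorem \ref{doublepot} (ii), this inverse is bounded and linear, so $\Lambda[0,\cdot,\cdot,\cdot]$ is a linear homeomorphism.

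Next I would check that each $\Lambda[\epsilon,\cdot,\cdot,\cdot]$ is linear and continuous — linearity is evident, and continuity follows from Theorem \ref{doublepot} (ii)--(iii), from the $C^\infty$ smoothness of ${S_n}_{|\partial\Omega^o}$, ${S_n}_{|\partial\Omega^i}$ and of $(x,y)\mapsto\nabla S_n(x-\epsilon y)$ on $\partial\Omega^o\times\overline{\Omega^i}$ for $\epsilon$ small (recall $0\in\Omega^o\cap\Omega^i$, so that $x-\epsilon y$ stays away from the origin), together with the boundedness of $u\mapsto u(\epsilon\cdot)_{|\partial\Omega^i}$ from $C^{1,\alpha}(\overline{\Omega^o})$ to $\COi$ — and, crucially, that $\epsilon\mapsto\Lambda[\epsilon,\cdot,\cdot,\cdot]$ is continuous into $\mathcal{L}(\COo\times\COi_0\times\R,\COo\times\COi)$. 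The slickest way to obtain this last continuity is to notice that $\Lambda_1$ coincides with $M_1$ and that $\Lambda_2$ is precisely the part of $M_2$ which is linear in $(\phi^o,\phi^i,\zeta)$, so that $\Lambda[\epsilon,\cdot,\cdot,\cdot]$ is the $(\phi^o,\phi^i,\zeta)$-block of the partial differential of $(M_1,M_2)$ with respect to $(\phi^o,\phi^i,\zeta)$ at any base point; this block is extracted from $\partial_{(\phi^o,\phi^i,\zeta,\psi^i)}M[\epsilon,\cdot,\cdot,\cdot,\cdot]$ by bounded linear operations (restricting the increment to directions with zero $\psi^i$-component and projecting the target onto its first two factors), and hence depends real analytically — in particular continuously — on $\epsilon$ by Theorem \ref{existenceThm} (i). Alternatively, one argues term by term; the only point requiring care is the continuous dependence on $\epsilon$ of $\phi^o\mapsto w^+_{\Omega^o}[\phi^o](\epsilon\cdot)_{|\partial\Omega^i}$, i.e.\ of the composition with the dilation $t\mapsto\epsilon t$, which is of the type routinely handled in the functional analytic approach. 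I regard this continuous operator-norm dependence on $\epsilon$ as the main (though mild) obstacle in the argument; everything else is a direct application of Theorem \ref{doublepot}, Lemma \ref{isolem} and soft functional analysis.

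To conclude, set $C_0\equiv\|\Lambda[0,\cdot,\cdot,\cdot]^{(-1)}\|_{\mathcal{L}(\COo\times\COi,\COo\times\COi_0\times\R)}$. By the continuity established above there is $\epsilon''\in]0,\epsilon'[$ such that $\|\Lambda[\epsilon,\cdot,\cdot,\cdot]-\Lambda[0,\cdot,\cdot,\cdot]\|<(2C_0)^{-1}$ for all $\epsilon\in]-\epsilon'',\epsilon''[$; then $\Lambda[\epsilon,\cdot,\cdot,\cdot]=\Lambda[0,\cdot,\cdot,\cdot]\bigl(I+\Lambda[0,\cdot,\cdot,\cdot]^{(-1)}(\Lambda[\epsilon,\cdot,\cdot,\cdot]-\Lambda[0,\cdot,\cdot,\cdot])\bigr)$ is invertible, and the Neumann series gives $\|\Lambda[\epsilon,\cdot,\cdot,\cdot]^{(-1)}\|\le C_0/(1-\frac12)=2C_0=:C$ for all $\epsilon\in]-\epsilon'',\epsilon''[$, which is the asserted uniform bound.
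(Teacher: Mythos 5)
Your proposal is correct and follows essentially the same route as the paper: prove that $\epsilon\mapsto\Lambda[\epsilon,\cdot,\cdot,\cdot]$ is continuous in operator norm, show that $\Lambda[0,\cdot,\cdot,\cdot]$ is an isomorphism via the triangular structure using Theorem \ref{doublepot} (iv) and Lemma \ref{isolem}, and conclude by the openness of the set of invertible operators with a uniform bound on the inverse near $\epsilon=0$. The only minor variations are that you build the inverse of $\Lambda[0,\cdot,\cdot,\cdot]$ explicitly instead of invoking the Open Mapping Theorem, and you obtain the operator-norm continuity in $\epsilon$ either by extracting $\Lambda$ as a block of $\partial_{(\phi^o,\phi^i,\zeta,\psi^i)}M$ (using Theorem \ref{existenceThm} (i)) or term by term, whereas the paper argues directly from the mapping properties of layer potentials and of integral operators with real analytic kernels; both are legitimate, and your Neumann series step merely makes explicit the uniform bound that the paper leaves implicit.
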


\begin{proof}
	By the mapping properties of the double layer potential (cf. Theorem \ref{doublepot} (ii) and Theorem \ref{doublepot} (iii)) and of integral operators with real analytic kernels (cf. Lanza de Cristoforis and Musolino \cite{LaMu13}) one verifies that  the map from $]-\epsilon',\epsilon'[$ to $\mathcal{L}(\COo \times \COi_0 \times \R,\COo \times \COi)$ which takes $\epsilon$ to $\Lambda[\epsilon,\cdot,\cdot,\cdot]$ is continuous. Since the set of invertible operators is open in the space $\mathcal{L}(\COo \times \COi_0 \times \R,\COo \times \COi)$, to complete the proof it suffices to show that for $\epsilon=0$  the map which takes  $(\phi^o,\phi^i,\zeta) \in \COo \times \COi_0 \times \R$ to 
	\begin{equation*}
	\begin{split}
	&\Lambda[0,\phi^o,\phi^i,\zeta]\\
	&\qquad =  
	\left(
	\left( \frac{1}{2}I + W_{\partial\Omega^o}\right)[\phi^o] ,
	\left(-\frac{1}{2}I + W_{\partial\Omega^i}\right) [\phi^i] +  w^+_{\Omega^o}[\phi^o](0)
	+ {S_n}_{|\partial\Omega^i} \zeta\right)\\
	&\qquad\in \COo \times \COi	
	\end{split}
	\end{equation*}
is invertible. To prove it, we verify that it is a bijection and then we exploit  the Open Mapping Theorem. So let $(h^o,h^i) \in \COo\times\COi$. We claim that there exists a unique $(\phi^o,\phi^i,\zeta) \in \COo \times \COi_0 \times \R$ such that 
	\begin{equation}\label{propAoperator.eq1}
	\Lambda[0,\phi^o,\phi^i,\zeta] = (h^o,h^i).
	\end{equation}
	Indeed, by Theorem \ref{doublepot} (iv), $ \frac{1}{2}I + W_{\partial\Omega^o}$ is an isomorphism from $\COo$ into itself and there exists a unique $\phi^o\in  \COo$ that satisfies the first equation of \eqref{propAoperator.eq1}. Moreover, by Lemma \ref{isolem}, the map from $\COi_0 \times \R$ to $\COi$ that takes $(\phi^i,\zeta)$ to $\left(-\frac{1}{2}I + W_{\partial\Omega^i}\right) [\phi^i] 
	+ {S_n}_{|\partial\Omega^i} \zeta,$ is an isomorphism.	Hence, there exists a unique $(\phi^i,\zeta) \in \COi_0 \times \R$ such that
	\begin{equation*}
	\left(-\frac{1}{2}I + W_{\partial\Omega^i}\right) [\phi^i] 
	+ {S_n}_{|\partial\Omega^i} \zeta = h^i - w^+_{\Omega^o}[\phi^o](0).
	\end{equation*}
	Accordingly, there exists a unique $(\phi^i,\zeta) \in \COi_0 \times \R$  that satisfies the second equation of \eqref{propAoperator.eq1}.
	Thus $\Lambda[0,\cdot,\cdot,\cdot]$ is an isomorphism from $\COo \times \COi_0 \times \R$ to $\COo \times \COi$ and the proof is complete.	
\end{proof}

\subsection{A first local uniqueness result}
We are now ready to state our first local uniqueness result for the solution $(u^o_\epsilon,u^i_\epsilon)$. Theorem \ref{Thmunisol1} here below is, in a sense,  a consequence of an argument based on the Implicit Function Theorem for real analytic maps (see, for example, Deimling \cite[Thm. 15.3]{De85}) that has been used in \cite{Mo18} to prove the existence of such solution. We shall see in the following Subsection \ref{second} that the statement of Theorem \ref{Thmunisol1} holds under much weaker assumptions. 

\begin{teo}\label{Thmunisol1}
	Let assumptions  \eqref{zetaicond}, \eqref{F1}, and \eqref{realanalhp} hold true.  Let $\epsilon' \in ]0,\epsilon_0[$ be as in Theorem \ref{existenceThm} (v).
	Let $\{(u^o_\epsilon,u^i_\epsilon)\}_{\epsilon \in ]0,\epsilon'[}$ be as in Theorem \ref{uesol}. Then there exist $\epsilon^*\in]0,\epsilon'[$ and $\delta^\ast \in ]0,+\infty[$ such that the following property holds:
	
	If $\epsilon \in ]0,\epsilon^*[$ and $(v^o,v^i) \in C^{1,\alpha}(\Omega(\epsilon)) \times C^{1,\alpha}(\epsilon\Omega^i)$ is a solution of problem \eqref{princeq} with
	\begin{align}
	\left\|v^o - u^o_\epsilon \right\| _{C^{1,\alpha}(\partial \Omega^o)} &\leq \epsilon\delta^\ast, \label{uo}
	\\
	\left\| v^o(\epsilon \cdot ) - u^o_\epsilon(\epsilon \cdot ) \right\| _{C^{1,\alpha}(\partial \Omega^i)} &\leq \epsilon\delta^\ast, \label{uoe}
	\\
	\left\| v^i(\epsilon \cdot ) - u^i_\epsilon(\epsilon \cdot ) \right\| _{C^{1,\alpha}(\partial \Omega^i)} &\leq \epsilon\delta^\ast, \label{uie}
	\end{align}
	
	then
	\begin{equation*}
	(v^o,v^i)=(u^o_\epsilon,u^i_\epsilon)\,.
	\end{equation*}
\end{teo}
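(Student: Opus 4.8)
The plan is to reduce the local uniqueness statement to an injectivity statement for the map $M$ near the point $(0,\phi^o_0,\phi^i_0,\zeta_0,\psi^i_0)$, exploiting the local structure of the zero set of $M$ provided by Theorem \ref{existenceThm} (v). First I would observe that, by Proposition \ref{rappresentsol}, any solution $(v^o,v^i)$ of \eqref{princeq} can be written uniquely as $(U^o_\epsilon[\phi^o,\phi^i,\zeta,\psi^i],U^i_\epsilon[\phi^o,\phi^i,\zeta,\psi^i])$ for some $(\phi^o,\phi^i,\zeta,\psi^i)\in\COo\times\COi_0\times\R\times\COi$, and by Theorem \ref{existenceThm} (ii) this pair satisfies $M[\epsilon,\phi^o,\phi^i,\zeta,\psi^i]=(0,0,0)$. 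Likewise $(u^o_\epsilon,u^i_\epsilon)$ corresponds to the pair $(\Phi^o[\epsilon],\Phi^i[\epsilon],Z[\epsilon],\Psi^i[\epsilon])$. So the theorem will follow if I can show that, for $\epsilon$ small and for $(\phi^o,\phi^i,\zeta,\psi^i)$ close enough to $(\Phi^o[\epsilon],\Phi^i[\epsilon],Z[\epsilon],\Psi^i[\epsilon])$, the only zero of $M[\epsilon,\cdot,\cdot,\cdot,\cdot]$ in that neighborhood is $(\Phi^o[\epsilon],\Phi^i[\epsilon],Z[\epsilon],\Psi^i[\epsilon])$ itself. Since $(\Phi^o[\epsilon],\Phi^i[\epsilon],Z[\epsilon],\Psi^i[\epsilon])\to(\phi^o_0,\phi^i_0,\zeta_0,\psi^i_0)$ as $\epsilon\to 0$, it suffices to prove that $M$ is injective on a fixed neighborhood $]-\epsilon^*,\epsilon^*[\times\mathcal B_{0,r}$ of $(0,\phi^o_0,\phi^i_0,\zeta_0,\psi^i_0)$ — this is exactly the kind of local injectivity that comes packaged with the real-analytic Implicit Function Theorem argument of \cite{Mo18}: because $\partial_{(\phi^o,\phi^i,\zeta,\psi^i)}M[0,\phi^o_0,\phi^i_0,\zeta_0,\psi^i_0]$ is an isomorphism (Theorem \ref{existenceThm} (iv)), by continuity $\partial_{(\phi^o,\phi^i,\zeta,\psi^i)}M[\epsilon,\cdot,\cdot,\cdot,\cdot]$ remains an isomorphism on a neighborhood, and a standard quantitative inverse-function estimate yields that for each fixed small $\epsilon$ the map $M[\epsilon,\cdot,\cdot,\cdot,\cdot]$ is injective on a ball of radius independent of $\epsilon$.

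Next I would translate the three smallness hypotheses \eqref{uo}, \eqref{uoe}, \eqref{uie} into the statement that the density tuple $(\phi^o,\phi^i,\zeta,\psi^i)$ associated with $(v^o,v^i)$ lies in $\mathcal B_{0,r}$ for $\epsilon$ small. The representation formulas \eqref{rappsol} express the traces of $v^o-u^o_\epsilon$ on $\partial\Omega^o$, of $v^o(\epsilon\cdot)-u^o_\epsilon(\epsilon\cdot)$ on $\partial\Omega^i$, and of $v^i(\epsilon\cdot)-u^i_\epsilon(\epsilon\cdot)$ on $\partial\Omega^i$ in terms of the differences $\phi^o-\Phi^o[\epsilon]$, $\phi^i-\Phi^i[\epsilon]$, $\zeta-Z[\epsilon]$, $\psi^i-\Psi^i[\epsilon]$, after dividing by $\epsilon$ (the $\epsilon$ and $\epsilon^{n-1}$ prefactors in \eqref{rappsol} are exactly what produces the factor $\epsilon$ on the right-hand sides of \eqref{uo}–\eqref{uie}). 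In fact, collecting these three traces is precisely applying the operator $\Lambda[\epsilon,\cdot,\cdot,\cdot]$ of \eqref{Aoperator} to $(\phi^o-\Phi^o[\epsilon],\phi^i-\Phi^i[\epsilon],\zeta-Z[\epsilon])$ — together with the $\psi^i$-contribution to the trace of $v^i(\epsilon\cdot)$, which is controlled by $\left(\tfrac12 I+W_{\partial\Omega^i}\right)[\psi^i-\Psi^i[\epsilon]]$ and hence, via Theorem \ref{doublepot} (iv), by $\|\psi^i-\Psi^i[\epsilon]\|_{\COi}$. Using the uniform invertibility of $\Lambda[\epsilon,\cdot,\cdot,\cdot]$ from Proposition \ref{propAoperator} (and the isomorphism of Theorem \ref{doublepot} (iv) for the $\psi^i$ part), I can invert this correspondence: there is a constant $C'$, independent of $\epsilon\in]-\epsilon'',\epsilon''[$, such that
\[
\|\phi^o-\Phi^o[\epsilon]\|_{\COo}+\|\phi^i-\Phi^i[\epsilon]\|_{\COi}+|\zeta-Z[\epsilon]|+\|\psi^i-\Psi^i[\epsilon]\|_{\COi}\le \frac{C'}{\epsilon}\Big(\left\|v^o-u^o_\epsilon\right\|_{C^{1,\alpha}(\partial\Omega^o)}+\left\|v^o(\epsilon\cdot)-u^o_\epsilon(\epsilon\cdot)\right\|_{C^{1,\alpha}(\partial\Omega^i)}+\left\|v^i(\epsilon\cdot)-u^i_\epsilon(\epsilon\cdot)\right\|_{C^{1,\alpha}(\partial\Omega^i)}\Big).
\]
Choosing $\delta^*$ so that $3C'\delta^*<r/2$, hypotheses \eqref{uo}–\eqref{uie} force the left-hand side to be less than $r/2$; combined with $\|(\Phi^o[\epsilon],\Phi^i[\epsilon],Z[\epsilon],\Psi^i[\epsilon])-(\phi^o_0,\phi^i_0,\zeta_0,\psi^i_0)\|<r/2$ for $\epsilon$ small (shrink $\epsilon^*$ if needed), this places $(\phi^o,\phi^i,\zeta,\psi^i)$ in $\mathcal B_{0,r}$.

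Finally I would put the pieces together: for $\epsilon\in]0,\epsilon^*[$ the tuple $(\phi^o,\phi^i,\zeta,\psi^i)$ lies in the neighborhood on which $M[\epsilon,\cdot,\cdot,\cdot,\cdot]$ is injective, it is a zero of $M[\epsilon,\cdot,\cdot,\cdot,\cdot]$ by Theorem \ref{existenceThm} (ii), and so is $(\Phi^o[\epsilon],\Phi^i[\epsilon],Z[\epsilon],\Psi^i[\epsilon])$ by Theorem \ref{existenceThm} (v); hence the two tuples coincide, and applying Proposition \ref{rappresentsol} gives $(v^o,v^i)=(u^o_\epsilon,u^i_\epsilon)$. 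The main obstacle, and the step requiring care, is making the injectivity of $M[\epsilon,\cdot,\cdot,\cdot,\cdot]$ \emph{uniform} in $\epsilon$ — i.e. producing a ball radius $r$ and an interval $]-\epsilon^*,\epsilon^*[$ that work simultaneously — which one gets from the continuity in $\epsilon$ of the Fréchet differential of $M$ (Theorem \ref{existenceThm} (i)), the fact that the set of isomorphisms is open with the inversion map continuous, and the standard estimate $\|M[\epsilon,\mathbf x_1]-M[\epsilon,\mathbf x_2]\|\ge c\|\mathbf x_1-\mathbf x_2\|$ valid on a ball whose radius depends only on a uniform lower bound for the norms of the inverse differentials and a uniform modulus of continuity of the differential; the $\epsilon$-dependence is handled exactly as in Proposition \ref{propAoperator}. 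The remaining bookkeeping — the precise form of the representation formulas \eqref{rappsol} that yields the $\Lambda$-identity above, and the elementary manipulation of the factor $\epsilon$ — is routine.
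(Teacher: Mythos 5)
Your argument is correct, and its central portion coincides with the paper's: you represent $(v^o,v^i)$ by densities via Proposition \ref{rappresentsol}, use the jump relations of Theorem \ref{doublepot} (i), the isomorphism $\tfrac12 I+W_{\partial\Omega^i}$ of Theorem \ref{doublepot} (iv), and the uniform invertibility of $\Lambda[\epsilon,\cdot,\cdot,\cdot]$ from Proposition \ref{propAoperator} to convert \eqref{uo}--\eqref{uie} into an $\epsilon$-independent bound of order $\delta^\ast$ on $\|(\phi^o-\Phi^o[\epsilon],\phi^i-\Phi^i[\epsilon],\zeta-Z[\epsilon],\psi^i-\Psi^i[\epsilon])\|$, which places the density tuple in a fixed ball $\mathcal{B}_{0,K}$ around $(\phi^o_0,\phi^i_0,\zeta_0,\psi^i_0)$ once $\delta^\ast$ is small and $\epsilon^\ast$ is chosen so that $(\Phi^o[\epsilon],\Phi^i[\epsilon],Z[\epsilon],\Psi^i[\epsilon])\in\mathcal{B}_{0,K/2}$. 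Where you diverge is the final uniqueness step: you propose to prove, via a quantitative inverse-function/mean-value estimate, that $M[\epsilon,\cdot,\cdot,\cdot,\cdot]$ is injective on a ball of radius independent of $\epsilon$, and you flag this as the main obstacle. The paper sidesteps this entirely: Theorem \ref{existenceThm} (v) already states that the set of zeros of $M$ in $]-\epsilon',\epsilon'[\times U_0$ \emph{coincides with the graph} of $(\Phi^o[\cdot],\Phi^i[\cdot],Z[\cdot],\Psi^i[\cdot])$, so once $(\phi^o,\phi^i,\zeta,\psi^i)\in\mathcal{B}_{0,K}\subset U_0$ and $M[\epsilon,\phi^o,\phi^i,\zeta,\psi^i]=0$ (Theorem \ref{existenceThm} (ii)), the identity $(\phi^o,\phi^i,\zeta,\psi^i)=(\Phi^o[\epsilon],\Phi^i[\epsilon],Z[\epsilon],\Psi^i[\epsilon])$ is immediate, with no need for injectivity of $M[\epsilon,\cdot,\cdot,\cdot,\cdot]$ itself. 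Your uniform-injectivity route is legitimate (continuity of the partial differential near $(0,\phi^o_0,\phi^i_0,\zeta_0,\psi^i_0)$, openness of the set of isomorphisms, and a Lipschitz lower bound on a convex ball do give it), and it would in principle yield more quantitative information about the admissible radius; but it is extra work that the already-available graph characterization renders unnecessary, so the paper's conclusion of the proof is shorter.
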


\begin{proof}
	Let $U_0$ be the open neighborhood of $(\phi^o_0, \phi^i_0, \zeta_0, \psi^i_0)$ in $\COo \times \COi_0 \times \R \times \COi$ introduced in Theorem \ref{existenceThm} (v). We take $K>0$ such that 
	\[ \overline{\mathcal{B}_{0,K}}\subseteq U_0\,. 
	\]
	Since $(\Phi^o[\cdot],\Phi^i[\cdot],Z[\cdot],\Psi^i[\cdot])$ is continuous (indeed real analytic) from $]-\epsilon',\epsilon'[$ to $U_0$, then there exists ${\epsilon'_*} \in ]0,\epsilon'[$ such that
	\begin{equation}\label{(Phi^o,Phi^i,Z,Psi^i)K/2}
	(\Phi^o[\eta] , \Phi^i[\eta],Z[\eta] , \Psi^i[\eta])\in \mathcal{B}_{0,K/2} \qquad\forall \eta \in ]0,\epsilon'_*[\,.
	\end{equation}	
	Let $\epsilon''$ be as in Proposition \ref{propAoperator} and let
	\begin{equation*}
	\epsilon^\ast \equiv \min\{\epsilon'_\ast,\epsilon''\}.
	\end{equation*}
	Let $\epsilon \in ]0,\epsilon^\ast[$ be fixed and let $(v^o,v^i) \in C^{1,\alpha}(\Omega(\epsilon)) \times C^{1,\alpha}(\epsilon\Omega^i)$ be a solution of problem \eqref{princeq} that satisfies \eqref{uo}, \eqref{uoe}, and \eqref{uie} for a certain  $\delta^\ast \in ]0,+\infty[$. We show that for $\delta^\ast$ sufficiently small $(v^o,v^i)=(u^o_\epsilon,u^i_\epsilon)$. By Proposition \ref{rappresentsol}, there exists a unique quadruple $(\phi^o,\phi^i,\zeta,\psi^i) \in \COo \times \COi_0 \times \R \times \COi$ such that
	\begin{align}\label{uo[]}
	v^o &= U^o_{\epsilon}[\phi^o,\phi^i,\zeta,\psi^i]  \qquad \text{in } \overline{\Omega(\epsilon)},
	\\
	\label{ui[]}
	v^i &= U^i_{\epsilon}[\phi^o,\phi^i,\zeta,\psi^i]  \qquad \text{in } \epsilon \overline{\Omega^i}.
	\end{align}
	By \eqref{uie} and by \eqref{ui[]}, we have
	\begin{equation}\label{(u^i-u_e^i)}
	\begin{split}
	\delta^\ast &\geq \left\| \frac{v^i(\epsilon \cdot ) - u^i_\epsilon(\epsilon \cdot )}{\epsilon} \right\| _{C^{1,\alpha}(\partial \Omega^i)}\\
	& =
	\left\| \frac{U^i_\epsilon[\phi^o,\phi^i,\zeta,\psi^i](\epsilon \cdot ) - u^i_\epsilon(\epsilon \cdot )}{\epsilon} \right\| _{C^{1,\alpha}(\partial \Omega^i)} 
	\\
	&
	= \left\| \frac{\epsilon w^+_{\epsilon\Omega^i}\left[\psi^i\left(\frac{\cdot}{\epsilon}\right)\right](\epsilon \cdot) + \zeta^i -\epsilon w^+_{\epsilon\Omega^i}\left[\Psi^i[\epsilon]\left(\frac{\cdot}{\epsilon}\right)\right](\epsilon \cdot) - \zeta^i}{\epsilon} \right\| _{C^{1,\alpha}(\partial \Omega^i)} 
	\\
	&
	= \left\| w^+_{\Omega^i}[\psi^i] - w^+_{\Omega^i}[\Psi^i[\epsilon]] \right\| _{C^{1,\alpha}(\partial \Omega^i)} \,.
	\end{split}
	\end{equation}
	By the jump relations in Theorem \ref{doublepot} (i), we obtain
	\begin{equation}\label{Thmunisol1.eq1}
	\left\| \left(\frac{1}{2}I + W_{\partial\Omega^i}\right)[\psi^i] - \left(\frac{1}{2}I + W_{\partial\Omega^i}\right)[\Psi^i[\epsilon]] \right\|_{\COi} \leq \, \delta^\ast.
	\end{equation}
	By Theorem \ref{doublepot} (iv), the operator $\frac{1}{2}I + W_{\partial\Omega^i}$ is a linear isomorphism from $\COi$ to itself. Then, if we denote by $D$ the norm of its inverse, namely we set
	\[
	D \equiv \left\|\left(\frac{1}{2}I + W_{\partial\Omega^i}\right)^{(-1)} \right\|_{\mathcal{L}(\COi,\COi)}\,,
	\]
	we obtain, by \eqref{uie} and by \eqref{Thmunisol1.eq1}, that
	\begin{equation}\label{psi^i}
	\begin{split}
	&\| \psi^i - \Psi^i[\epsilon] \|_{\COi}\\
	&\quad\leq \left\|\left(\frac{1}{2}I + W_{\partial\Omega^i}\right)^{(-1)} \right\|_{\mathcal{L}(\COi,\COi)} 
	\\
	&\qquad\times\left\| \left(\frac{1}{2}I + W_{\partial\Omega^i}\right)[\psi^i] - \left(\frac{1}{2}I + W_{\partial\Omega^i}\right)[\Psi^i[\epsilon]] \right\|_{\COi}\\
  &\quad \leq  D \delta^\ast.
	\end{split}
	\end{equation}
	By \eqref{uo} and \eqref{uo[]} we have
	{\small \begin{equation}\label{(u^o - u_e^o)1}
	\begin{split}
	&\delta^\ast \geq \left\| \frac{v^o - u^o_\epsilon}{\epsilon} \right\| _{C^{1,\alpha}(\partial \Omega^o)} 
	=\left\| \frac{U^o_{\epsilon}[\phi^o,\phi^i,\zeta,\psi^i] - u^o_\epsilon)}{\epsilon} \right\| _{C^{1,\alpha}(\partial \Omega^o)} 
	\\
	&= \left\| \frac{\epsilon w^+_{\Omega^o}[\phi^o -\Phi^o[\epsilon] ] + \epsilon w^-_{\epsilon\Omega^i}\left[\phi^i\left(\frac{\cdot}{\epsilon}\right) -\Phi^i[\epsilon]\left(\frac{\cdot}{\epsilon}\right)\right] + \epsilon^{n-1}\left(\zeta - Z[\epsilon] \right)\, S_n}{\epsilon} \right\| _{C^{1,\alpha}(\partial \Omega^o)}
	\\
	&= \left\|  w^+_{\Omega^o}[\phi^o -\Phi^o[\epsilon] ] +  w^-_{\epsilon\Omega^i}\left[\phi^i\left(\frac{\cdot}{\epsilon}\right) -\Phi^i[\epsilon]\left(\frac{\cdot}{\epsilon}\right)\right] + \epsilon^{n-2}\left(\zeta - Z[\epsilon] \right)\, S_n \right\| _{C^{1,\alpha}(\partial \Omega^o)}.
	\end{split}	
	\end{equation}}
	Similarly,  \eqref{uoe} and \eqref{uo[]} yield
	{\small \begin{equation}\label{(u^o - u_e^o)2}
	\begin{split}
	&\delta^\ast \geq \left\| \frac{v^o(\epsilon \cdot) - u^o_\epsilon( \epsilon\cdot)}{\epsilon} \right\| _{C^{1,\alpha}(\partial \Omega^i)} 
	=\left\| \frac{U^o_\epsilon[\phi^o,\phi^i,\zeta,\psi^i](\epsilon \cdot) - u^o_\epsilon(\epsilon\cdot )}{\epsilon} \right\| _{C^{1,\alpha}(\partial \Omega^i)}  
	\\
	&= \left\|  w^+_{\Omega^o}[\phi^o -\Phi^o[\epsilon] ](\epsilon \cdot) + w^-_{\epsilon\Omega^i}\left[\phi\left(\frac{\cdot}{\epsilon}\right) -\Phi^i[\epsilon]\left(\frac{\cdot}{\epsilon}\right)\right](\epsilon \cdot)\right.\\
	&\left.\qquad\qquad \qquad\qquad \qquad\qquad \qquad\qquad + \epsilon^{n-2}\left(\zeta - Z[\epsilon] \right)\, S_n(\epsilon \cdot) \right\| _{C^{1,\alpha}(\partial \Omega^i)}
	\\
	&= \left\|  w^-_{\Omega^i}\left[\phi^i -\Phi^i[\epsilon]\right] + w^+_{\Omega^o}[\phi^o -\Phi^o[\epsilon] ](\epsilon \cdot) + \left(\zeta - Z[\epsilon] \right)\, S_n \right\| _{C^{1,\alpha}(\partial \Omega^i)}.
	\end{split}	
	\end{equation}}
	Then, by \eqref{(u^o - u_e^o)1} and \eqref{(u^o - u_e^o)2} and by the definition of the operator $\Lambda$ in \eqref{Aoperator}, we deduce that
	\begin{equation}\label{A[phi^o,phi^i,zeta]}
	\left\| \Lambda\left[\epsilon,\phi^o - \Phi^o[\epsilon],\phi^i - \Psi^i[\epsilon],\zeta - Z[\epsilon]\right] \right\| _{\COo\times\COi} \leq 2\delta^\ast
	\end{equation}
	(see also  the jump relations for the double layer potential in Theorem \ref{doublepot} (i)).
	Now let $C>0$ as in the statement of Proposition \ref{propAoperator}.
	Then, by the membership of $\epsilon$ in $]0,\epsilon^\ast[$, we have
	\begin{equation}\label{phi^o,phi^i,zeta}
	\begin{split}
	&\left(\phi^o - \Phi^o[\epsilon],\phi^i - \Psi^i[\epsilon],\zeta - Z[\epsilon]\right)\\
	&\qquad = \Lambda[\epsilon,\cdot,\cdot,\cdot]^{(-1)} \Lambda\left[\epsilon,\phi^o - \Phi^o[\epsilon],\phi^i - \Psi^i[\epsilon],\zeta - Z[\epsilon]\right]
	\end{split}
	\end{equation}
	and, by \eqref{A[phi^o,phi^i,zeta]} and \eqref{phi^o,phi^i,zeta}, we obtain
	\begin{equation*}
	\begin{split}
	&\|\left(\phi^o -  \Phi^o[\epsilon],  \phi^i - \Psi^i[\epsilon],\zeta - Z[\epsilon]\right)  \|_{\COo \times \COi_0 \times \R}
	\\
	& \quad\leq \|\Lambda[\epsilon,\cdot,\cdot,\cdot]^{(-1)}  \|_{\mathcal{L}(\COo \times \COi,\COo \times \COi_0 \times \R)} 
	\\
    & \quad\quad\times \left\| \Lambda\left[\epsilon,\phi^o - \Phi^o[\epsilon],\phi^i - \Psi^i[\epsilon],\zeta - Z[\epsilon]\right] \right\|_{\COo \times \COi_0 \times \R}
	\\
	&\quad\leq 2 C \delta^\ast\,.
	\end{split}
	\end{equation*}
	The latter inequality, combined with \eqref{psi^i}, yields
	\begin{equation}\label{phi^o,phi^i,zeta,psi^i}
	\begin{split}
	&\|\left(\phi^o -  \Phi^o[\epsilon],  \phi^i - \Psi^i[\epsilon],\zeta - Z[\epsilon], \psi^i - \Psi[\epsilon]\right)  \|_{\COo \times \COi_0 \times \R \times \COi}\\
	 &\qquad\qquad\qquad\qquad\qquad\qquad\qquad\qquad\qquad\qquad\leq (2C + D) \delta^* \,. 
	\end{split}
	\end{equation}
	Hence, by   \eqref{(Phi^o,Phi^i,Z,Psi^i)K/2} and \eqref{phi^o,phi^i,zeta,psi^i} and by a standard computation based on the triangular inequality one sees that
	\begin{equation*}
	\begin{split}
	&\|(\phi^o, \phi^i, \zeta, \psi^i) - (\phi^o_0, \phi^i_0, \zeta_0, \psi^i_0)\|_{\COo \times \COi_0 \times \R \times \COi}\\
	&\qquad\qquad\qquad\qquad\qquad\qquad\qquad\qquad\qquad\qquad \leq (2C+D)\delta^* + \frac{K}{2}.
	\end{split}
	\end{equation*}
	Accordingly, in order to have $(\phi^o, \phi^i, \zeta, \psi^i) \in \mathcal{B}_{0,K}$, it suffices to take 
	\begin{equation*}
	\delta^\ast < \frac{K}{2(2C+D)} 
	\end{equation*} 
	in inequalities \eqref{uo}, \eqref{uoe}, and \eqref{uie}. Then, by the inclusion $\overline{\mathcal{B}_{0,K}}\subseteq U_0$ and by Theorem \ref{existenceThm} (v), we deduce that for such choice of $\delta^\ast$ we have
	\begin{equation*}
	(\phi^o, \phi^i, \zeta, \psi^i) = \left(\Phi^o[\epsilon],  \Psi^i[\epsilon],Z[\epsilon],\Psi[\epsilon]\right) 
	\end{equation*}
	and thus  $(v^o,v^i)=(u^o_\epsilon,u^i_\epsilon)$ (cf. Theorem \ref{uesol}).
\end{proof}

\subsection{A stronger local uniqueness result}\label{second}

In this Subsection we will see that we can weaken the assumptions of Theorem \ref{Thmunisol1}. In particular, we will prove in Theorem \ref{Thmunisol2} that the local uniqueness of the solution can be achieved with only one condition on the trace of the function $v^i(\epsilon\cdot)$ on $\partial\Omega^i$, instead of the three conditions used in Theorem \ref{Thmunisol1}. To prove Theorem \ref{Thmunisol2} we shall need some preliminary technical results on composition operators.

\subsubsection{Some preliminary results on composition operators}

We begin with the following Lemma \ref{lemmaboundA(ep,dot,dot)}.

\begin{lemma}\label{lemmaboundA(ep,dot,dot)}
	Let $A$ be a function from $]-\epsilon_0,\epsilon_0[ \times \overline{B_{n-1}(0,1)} \times \R $ to $\R$. Let $\mathcal{M}_A$ be the map which takes a pair $(\epsilon,\zeta)\in]-\epsilon_0,\epsilon_0[ \times \R$  to the function $\mathcal{M}_A (\epsilon,\zeta)$ defined by
	\begin{equation}\label{M_A}
	\mathcal{M}_A (\epsilon,\zeta)(z) \equiv A(\epsilon,z,\zeta)\qquad\forall z\in  \overline{B_{n-1}(0,1)}.
	\end{equation}
Let $m\in\{0,1\}$. If $\mathcal{M}_A (\epsilon,\zeta)\in C^{m,\alpha}( \overline{B_{n-1}(0,1)} )$ for all $(\epsilon,\zeta)\in]-\epsilon_0,\epsilon_0[ \times \R$ and if the map $\mathcal{M}_A$ is real analytic from $]-\epsilon_0,\epsilon_0[ \times \R$ to $C^{m,\alpha}( \overline{B_{n-1}(0,1)} )$, then for every open bounded interval $\mathcal{J}$ of $\R$ and every compact subset $\mathcal{E}$ of $]-\epsilon_0,\epsilon_0[$ there exists $C>0$ such that
	\begin{equation}\label{A<C}
	\sup_{\epsilon \in \mathcal{E}} \|A(\epsilon,\cdot,\cdot)\|_{C^{m,\alpha}( \overline{B_{n-1}(0,1)} \times \overline{\mathcal{J}})} \leq C.
	\end{equation}
\end{lemma}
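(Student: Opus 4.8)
The statement is a quantitative consequence of the real analyticity of $\mathcal{M}_A$, and the natural route is to transfer the bound from the space-valued analytic map to a bound on the ordinary function $A$ on a compact box. First I would fix an open bounded interval $\mathcal{J}$ and a compact set $\mathcal{E}\subseteq\,]-\epsilon_0,\epsilon_0[$. Since $\mathcal{E}$ is compact and $\overline{\mathcal{J}}$ is compact, and $\mathcal{M}_A$ is real analytic (hence continuous) from $]-\epsilon_0,\epsilon_0[\times\mathbb{R}$ to the Banach space $C^{m,\alpha}(\overline{B_{n-1}(0,1)})$, the image of the compact set $\mathcal{E}\times\overline{\mathcal{J}}$ under $\mathcal{M}_A$ is compact in $C^{m,\alpha}(\overline{B_{n-1}(0,1)})$, and in particular bounded: there is $C>0$ with
\[
\sup_{(\epsilon,\zeta)\in\mathcal{E}\times\overline{\mathcal{J}}}\bigl\|\mathcal{M}_A(\epsilon,\zeta)\bigr\|_{C^{m,\alpha}(\overline{B_{n-1}(0,1)})}\le C.
\]
So the content of the lemma is really just to rewrite this uniform bound on the family $\{\mathcal{M}_A(\epsilon,\zeta)\}$ of functions of $z$ as a bound on the single function $A(\epsilon,\cdot,\cdot)$ of the two variables $(z,\zeta)$, uniformly in $\epsilon\in\mathcal{E}$.

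The second step is that translation between the two norms. Writing out the $C^{m,\alpha}(\overline{B_{n-1}(0,1)}\times\overline{\mathcal{J}})$ norm of $A(\epsilon,\cdot,\cdot)$, one has to control (i) the sup norms of $A$ and of its partial derivatives up to order $m$ in the joint variables, and (ii) when $m=1$, the $\alpha$-Hölder seminorms of the first-order partials in the joint variable $(z,\zeta)$. For (i), the $z$-derivatives and the sup of $A$ itself are immediately dominated by $\sup_{(\epsilon,\zeta)\in\mathcal{E}\times\overline{\mathcal{J}}}\|\mathcal{M}_A(\epsilon,\zeta)\|_{C^{m,\alpha}}$; the $\zeta$-derivative $\partial_\zeta A(\epsilon,z,\zeta)$ is the value at $z$ of the function $\partial_\zeta\mathcal{M}_A(\epsilon,\zeta)=d_\zeta\mathcal{M}_A(\epsilon,\zeta).1$ obtained from the (real analytic, hence differentiable) dependence of $\mathcal{M}_A$ on $\zeta$, and this differential map is again continuous on the compact $\mathcal{E}\times\overline{\mathcal{J}}$, hence bounded in operator norm into $C^{m,\alpha}(\overline{B_{n-1}(0,1)})$. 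For (ii), I would split the Hölder seminorm over $(z,\zeta)$, $(z',\zeta')$ into the $z$-direction and the $\zeta$-direction: the $z$-direction at fixed $\zeta$ is controlled by $|\mathcal{M}_A(\epsilon,\zeta):B_{n-1}(0,1)|_\alpha$, again uniformly bounded; the $\zeta$-direction at fixed $z$ is controlled, since $\overline{\mathcal{J}}$ is bounded, by a Lipschitz-type bound, i.e.\ by $\sup_{\zeta\in\overline{\mathcal{J}}}\|\partial_\zeta\mathcal{M}_A(\epsilon,\zeta)\|_{C^{m,\alpha}}\cdot(\mathrm{diam}\,\mathcal{J})^{1-\alpha}$, which is finite by the previous point. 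Combining, $\|A(\epsilon,\cdot,\cdot)\|_{C^{m,\alpha}(\overline{B_{n-1}(0,1)}\times\overline{\mathcal{J}})}$ is bounded by a constant times the uniform bound on $\mathcal{M}_A$ and on $d_\zeta\mathcal{M}_A$ over $\mathcal{E}\times\overline{\mathcal{J}}$, with constant depending only on $\mathrm{diam}\,\mathcal{J}$ and $\alpha$.

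The main obstacle — more a point requiring care than a genuine difficulty — is the mixed Hölder quotient in (ii), where $z\neq z'$ and $\zeta\neq\zeta'$ simultaneously: here one uses the triangle inequality $|D^\eta A(\epsilon,z,\zeta)-D^\eta A(\epsilon,z',\zeta')|\le|D^\eta A(\epsilon,z,\zeta)-D^\eta A(\epsilon,z',\zeta)|+|D^\eta A(\epsilon,z',\zeta)-D^\eta A(\epsilon,z',\zeta')|$ and then $|(z,\zeta)-(z',\zeta')|\ge\max\{|z-z'|,|\zeta-\zeta'|\}$ to bound each term by the appropriate one-variable Hölder quotient (for the $\zeta$-term after first passing through a Lipschitz estimate, since the $\zeta$-dependence is $C^1$ not merely $C^\alpha$). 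One also needs to justify that real analyticity of $\mathcal{M}_A$ from $]-\epsilon_0,\epsilon_0[\times\mathbb{R}$ into the Banach space yields continuity of the partial differential $(\epsilon,\zeta)\mapsto\partial_\zeta\mathcal{M}_A(\epsilon,\zeta)\in C^{m,\alpha}(\overline{B_{n-1}(0,1)})$, which is standard: real analytic maps between Banach spaces are $C^\infty$, so in particular $C^1$, and their differential is continuous. Assembling these estimates gives \eqref{A<C}, completing the proof.
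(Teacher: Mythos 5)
Your proposal is correct in substance but takes a genuinely different route from the paper's proof. The paper handles the case $m=0$ by expanding $\mathcal{M}_A$ in a local power series around each point $(\tilde{\epsilon},\tilde{\zeta})$, with coefficients $a_{jk}\in C^{0,\alpha}(\overline{B_{n-1}(0,1)})$ obeying $\|a_{jk}\|\le M\rho^{-(j+k)}$, estimating the $C^0$ norm and the joint H\"older quotient term by term on each monomial $a_{jk}(z)(\epsilon-\tilde{\epsilon})^k(\zeta-\tilde{\zeta})^j$ (using the same add-and-subtract split of the mixed quotient that you propose), and then patching the local bounds together by a finite covering of the compact set $\mathcal{E}\times\overline{\mathcal{J}}$; the case $m=1$ is then deduced by applying the $m=0$ case to $A$, $\partial_\zeta A$ (via $\partial_\zeta\mathcal{M}_A=\mathcal{M}_{\partial_\zeta A}$) and $\partial_z A$ (via $\partial_z\circ\mathcal{M}_A$). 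You instead use only soft consequences of analyticity: continuity of $\mathcal{M}_A$ and of its $\zeta$-differentials on the compact set, a mean value bound in the $\zeta$-direction, and the interpolation $|\zeta'-\zeta''|\le(\operatorname{diam}\mathcal{J})^{1-\alpha}\,|\zeta'-\zeta''|^{\alpha}$, which avoids all series manipulations and is shorter; the paper's explicit computation, in exchange, produces a constant expressed through the local majorants and radii of the expansion. One imprecision in your $m=1$ step: for the $\zeta$-direction H\"older quotient of the first-order partial $\partial_\zeta A$, your displayed bound involving only $\sup_{\zeta\in\overline{\mathcal{J}}}\|\partial_\zeta\mathcal{M}_A(\epsilon,\zeta)\|_{C^{m,\alpha}}$ does not suffice; a Lipschitz bound there requires the second derivative $\partial^2_\zeta\mathcal{M}_A$ (or, as in the paper, one reduces $m=1$ to $m=0$ applied to $\partial_\zeta A$ and $\partial_z A$). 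This is harmless, since the principle you invoke, namely that real analytic maps between Banach spaces are of class $C^\infty$ with differentials of every order continuous and hence bounded on compact sets, covers it, but the constant in your estimate should be adjusted accordingly.
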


\begin{proof}
We first prove the statement of Lemma \ref{lemmaboundA(ep,dot,dot)} for $m=0$. If $\mathcal{M}_A$ is real analytic from $]-\epsilon_0,\epsilon_0[ \times \R$ to $C^{0,\alpha}( \overline{B_{n-1}(0,1)} )$, then  for every $(\tilde{\epsilon},\tilde{\zeta}) \in ]-\epsilon_0,\epsilon_0[ \times \R$ there exist $M \in ]0,+\infty[$, $\rho \in ]0,1[$, and a family of coefficients $\{a_{jk}\}_{j,k \in \N} \subset C^{0,\alpha}( \overline{B_{n-1}(0,1)} )$ such that
	\begin{equation}\label{realanalcoeffajk}
	\|a_{jk}\|_{C^{0,\alpha}( \overline{B_{n-1}(0,1)} ) } \leq M \left( \frac{1}{\rho} \right)^{k+j}\qquad\forall j,k\in\mathbb{N},
	\end{equation}
	and
	\begin{equation}\label{realanalexpA}
	\mathcal{M}_A (\epsilon,\zeta)(\cdot) = \sum_{j,k = 0}^{\infty} a_{jk}(\cdot) (\epsilon-\tilde{\epsilon})^k (\zeta-\tilde{\zeta})^j\qquad\forall(\epsilon,\zeta) \in ]\tilde{\epsilon}-\rho, \tilde{\epsilon}+\rho[ \times ]\tilde{\zeta}-\rho,\tilde{\zeta}+\rho[\,,
	\end{equation} 
	where $\rho$ is less than or equal  to the radius of convergence of the series in \eqref{realanalexpA}. Now let $\mathcal{J}\subset\R$ be open and  bounded and $\mathcal{E} \subset \, ]-\epsilon_0,\epsilon_0[$ be compact. Since the product $\overline{\mathcal{J}} \times \mathcal{E}$ is compact,  a standard finite covering argument shows that   in order to prove \eqref{A<C} for $m=0$ it suffices   to find a uniform upper bound (independent of $\tilde\epsilon$ and $\tilde\zeta$)  for the quantity
\begin{equation*}
\sup_{\epsilon \in [\tilde{\epsilon} - \frac{\rho}{2},\tilde{\epsilon} + \frac{\rho}{2}]} \|A(\epsilon,\cdot,\cdot)\|_{C^{0,\alpha}( \overline{B_{n-1}(0,1)} \times [\tilde{\zeta} - \frac{\rho}{4}, \tilde{\zeta} + \frac{\rho}{4}])}.
\end{equation*}
By \eqref{M_A}, \eqref{realanalcoeffajk}, and \eqref{realanalexpA} we have
	\begin{equation}\label{C0normA}
	\begin{split}
	\sup_{\epsilon \in [\tilde{\epsilon} - \frac{\rho}{2},\tilde{\epsilon} + \frac{\rho}{2}]}& \|A(\epsilon,\cdot,\cdot)\|_{C^0(\overline{B_{n-1}(0,1)}\times [\tilde{\zeta} - \frac{\rho}{4}, \tilde{\zeta} + \frac{\rho}{4}])} 
	\\
	& \leq \sup_{\epsilon \in [\tilde{\epsilon} - \frac{\rho}{2},\tilde{\epsilon} + \frac{\rho}{2}]} \sum_{j,k=0}^{\infty} \|a_{jk}(\cdot)  (\epsilon-\tilde{\epsilon})^k (\cdot-\tilde{\zeta})^j \|_{C^0(\overline{B_{n-1}(0,1)}\times [\tilde{\zeta} - \frac{\rho}{4}, \tilde{\zeta} + \frac{\rho}{4}])} 
	\\
	&\leq \sum_{j,k=0}^{\infty} M \left(\frac{1}{\rho}\right)^{j+k} \left(\frac{\rho}{2}\right)^k \left(\frac{\rho}{2}\right)^j =  	\sum_{j=0}^{\infty} M \left(\frac{1}{2}\right)^{j+k} = 4M
	\end{split}
	\end{equation}
for all $l=1,\dots, m$. Then inequality \eqref{C0normA} yields an estimate of the $C^0$ norm of $A$. To complete the proof of \eqref{A<C} for $m=0$ we have now to study  the H\"older constant of $A(\epsilon,\cdot,\cdot)$ on $\overline{B_{n-1}(0,1)}\times [\tilde{\zeta} - \frac{\rho}{4}, \tilde{\zeta} + \frac{\rho}{4}]$. To do so,  we take $z',z'' \in \overline{B_{n-1}(0,1)}$, $\zeta',\zeta'' \in [\tilde{\zeta}-\frac{\rho}{4},\tilde{\zeta}+\frac{\rho}{4}]$, and $\epsilon \in [\tilde{\epsilon} - \frac{\rho}{2}, \tilde{\epsilon} + \frac{\rho}{2}]$, and we consider the difference
\begin{equation}\label{a'-a''}
|a_{jk}(z') (\epsilon-\tilde{\epsilon})^k (\zeta'-\tilde{\zeta})^j - a_{jk}(z'') (\epsilon-\tilde{\epsilon})^k (\zeta''-\tilde{\zeta})^j | . 
\end{equation}	
For $j \geq 1$ and $k \geq 0$ we argue as follow: we add and subtract the term $a_{jk}(z'') (\epsilon-\tilde{\epsilon})^k (\zeta'-\tilde{\zeta})^j $ inside the absolute value in  \eqref{a'-a''}, we use the triangular inequality to split the difference in two terms and then  we exploit the membership of  $a_{jk}$ in $C^{0,\alpha}( \overline{B_{n-1}(0,1)} )$ and an argument based on the Taylor expansion at the first order for the function from $[\tilde{\zeta}-\frac{\rho}{4},\tilde{\zeta}+\frac{\rho}{4}]$ to $\R$ that takes $\zeta$ to $(\zeta-\tilde{\zeta})^j$. Doing so we show that \eqref{a'-a''} is less than or equal to 
\begin{equation}\label{a'-a''.1}
	\begin{split}
	&|a_{jk}(z')-a_{jk}(z'')| |\epsilon-\tilde{\epsilon}|^k |\zeta'-\tilde{\zeta}|^j + |a_{jk}(z'')| |\epsilon-\tilde{\epsilon}|^k |(\zeta'-\tilde{\zeta})^j - (\zeta''-\tilde{\zeta})^j|
	\\
	&\quad\leq
	\|a_{jk}\|_{C^{0,\alpha}( \overline{B_{n-1}(0,1)} )} |z'-z''|^\alpha |\epsilon-\tilde{\epsilon}|^k  |\zeta'-\tilde{\zeta}|^j\\
	&\qquad +  \|a_{jk}\|_{C^{0,\alpha}( \overline{B_{n-1}(0,1)} )} |\epsilon-\tilde{\epsilon}|^k \left(j |\overline{\zeta}-\tilde{\zeta}|^{j-1} |\zeta'-\zeta''|\right)
	\end{split}
	\end{equation}
	for a suitable $\overline{\zeta} \in [\tilde{\zeta}-\frac{\rho}{4},\tilde{\zeta}+\frac{\rho}{4}]$. Then by \eqref{realanalcoeffajk}, by inequalities $|\epsilon-\tilde{\epsilon}|\le \frac{\rho}{2}$, $|\zeta'-\tilde\zeta|\le \frac{\rho}{4}$, and $|\overline\zeta-\tilde\zeta|\le \frac{\rho}{4}$, and by a straightforward computation  we see that the right hand side of \eqref{a'-a''.1} is less than or equal to 
\begin{equation}\label{a'-a''.2}
\begin{split}
	&M \left(\frac{1}{\rho}\right)^{j+k} |z'-z''|^\alpha \left(\frac{\rho}{2}\right)^k \left(\frac{\rho}{4}\right)^j + M \left(\frac{1}{\rho}\right)^{j+k} \left(\frac{\rho}{2}\right)^k j \left(\frac{\rho}{4}\right)^{j-1}  |\zeta'-\zeta''|
		\\
	&\quad = 	M \left(\frac{1}{2}\right)^j\left(\frac{1}{2}\right)^{j+k} |z'-z''|^\alpha + 4 M \rho^{-1}\frac{j}{2^j}\left(\frac{1}{2}\right)^{j+k}
	|\zeta'-\zeta''|^{1-\alpha} |\zeta'-\zeta''|^\alpha.
	\end{split}
\end{equation}
Now, since  $\zeta'$ and $\zeta''$ are taken in the interval $[\tilde{\zeta}-\frac{\rho}{4},\tilde{\zeta}+\frac{\rho}{4}]$ we have $|\zeta'-\zeta''|^{1-\alpha}\le \left({\rho}/{2}\right)^{1-\alpha}$ and since $\rho \in ]0,1[$ and $\alpha \in ]0,1[$, we deduce that $|\zeta'-\zeta''|^{1-\alpha}\le 1$.  Moreover,  since $j\ge 1$, we have ${j}/{2^j} \leq 1$ and $(1/2)^j<1$. It follows that the right hand side of \eqref{a'-a''.2} is less than or equal to 
	\begin{equation}\label{a'-a''.3}
	\begin{split}
	&
	M 
	\left(\frac{1}{2}\right)^{j+k} |z'-z''|^\alpha 
	+ 4 M \rho^{-1}
	\left(\frac{1}{2}\right)^{j+k}
	|\zeta'-\zeta''|^\alpha\\
	&\qquad\qquad \leq
	4 M \rho^{-1}
	\left(\frac{1}{2}\right)^{j+k} \left(|z'-z''|^\alpha + |\zeta'-\zeta''|^\alpha\right)
	\end{split}
	\end{equation}
	(also note  that $\rho^{-1}>1$).
	Finally, by inequality 
	\[
	a^\alpha+b^\alpha\le 2^{1-\frac{\alpha}{2}}(a^2+b^2)^{\frac{\alpha}{2}}\,,
	\]
	which holds for all $a,b>0$, we deduce that the right hand side of \eqref{a'-a''.3} is less than or equal to 
	\begin{equation}\label{a'-a''.4}
	2^{3-\frac{\alpha}{2}} M \rho^{-1}
	\left(\frac{1}{2}\right)^{j+k} |(z',\zeta')-(z'',\zeta'')|^\alpha,
	\end{equation}
where $|(z',\zeta')-(z'',\zeta'')|$ denotes the Euclidean norm of the vector $(z',\zeta')-(z'',\zeta'')$ in $\R^{n-1}\times\R=\R^n$. Then, by \eqref{a'-a''.1}--\eqref{a'-a''.4}, we obtain that
\begin{equation}\label{a'-a''.5}
\begin{split}
&|a_{jk}(z') (\epsilon-\tilde{\epsilon})^k (\zeta'-\tilde{\zeta})^j - a_{jk}(z'') (\epsilon-\tilde{\epsilon})^k (\zeta''-\tilde{\zeta})^j |
\\
&\qquad\qquad
\le 2^{3-\frac{\alpha}{2}} M \rho^{-1}
	\left(\frac{1}{2}\right)^{j+k} |(z',\zeta')-(z'',\zeta'')|^\alpha
\end{split}
\end{equation}
for all $j \geq 1$, $k \geq 0$, and $\epsilon\in [\tilde{\epsilon} - \frac{\rho}{2}, \tilde{\epsilon} + \frac{\rho}{2}]$. Now, for every $\epsilon\in [\tilde{\epsilon} - \frac{\rho}{2}, \tilde{\epsilon} + \frac{\rho}{2}]$ we denote by $\tilde{a}_{jk, \epsilon}$ the function
\begin{equation}\label{tildeajk}
\begin{aligned}
\tilde{a}_{jk,\epsilon}\,:\; \overline{B_{n-1}(0,1)} \times \left[\tilde{\zeta} - \frac{\rho}{4},\tilde{\zeta} + \frac{\rho}{4}\right]&\to\R
\\
(z,\zeta)&\mapsto \tilde{a}_{jk,\epsilon}(z,\zeta) \equiv a_{jk}(z) (\epsilon-\tilde{\epsilon})^k (\zeta-\tilde{\zeta})^j\,.
\end{aligned}
\end{equation}
Then inequality \eqref{a'-a''.5} readily implies that
\[
\begin{split}
	&\left|\tilde{a}_{jk,\epsilon}: \overline{B_{n-1}(0,1)} \times \left[\tilde{\zeta} - \frac{\rho}{4},\tilde{\zeta} + \frac{\rho}{4}\right] \right|_\alpha \leq  2^{3-\frac{\alpha}{2}} M \rho^{-1}
	\left(\frac{1}{2}\right)^{j+k}\\
	 &\qquad\qquad\qquad\qquad\qquad\qquad\qquad \forall j \geq 1\,,\; k \geq 0\,,\; \epsilon\in \left[\tilde{\epsilon} - \frac{\rho}{2}, \tilde{\epsilon} + \frac{\rho}{2}\right]\,,
\end{split}
\]
	which in turn implies that
	\begin{equation}\label{lemmaboundA(ep,dot,dot)inequ4}
	\begin{split}
	& \sup_{\epsilon \in [\tilde{\epsilon} - \frac{\rho}{2},\tilde{\epsilon} + \frac{\rho}{2}]} \sum_{j=1,k=0}^{\infty} \left|\tilde{a}_{jk,\epsilon} : \overline{B_{n-1}(0,1)}\times \left[\tilde{\zeta} - \frac{\rho}{4}, \tilde{\zeta} + \frac{\rho}{4}\right] \right|_\alpha \\
	&\qquad\qquad\qquad
	\leq  \sum_{j=1,k=0}^{\infty}  2^{3-\frac{\alpha}{2}} M \rho^{-1}
	\left(\frac{1}{2}\right)^{j+k} \leq 2^{4-\frac{\alpha}{2}} M \rho^{-1}.
	\end{split}
	\end{equation}
	
	We now turn to consider \eqref{a'-a''} in the case where $j=0$ and $k \geq 0$. In such case, one verifies that the quantity in \eqref{a'-a''} is less than or equal to
\[
	   \|a_{0k}\|_{C^{0,\alpha}( \overline{B_{n-1}(0,1)} )}  |\epsilon-\tilde{\epsilon}|^k |z'-z''|^\alpha\,,
\]
	which, by \eqref{realanalcoeffajk} and by inequality $|\epsilon-\tilde{\epsilon}|\le \frac{\rho}{2}$, is less than or equal to 
	\[
  M \left( \frac{1}{\rho} \right)^{k}  \left(\frac{\rho}{2}\right)^k |z'-z''|^\alpha=M\left(\frac{1}{2}\right)^k|z'-z''|^\alpha\,.
	\]
	 Hence, for $\tilde{a}_{0k,\epsilon}$  defined as in \eqref{tildeajk} (with $j=0$) we have
	\begin{equation*}
	\begin{split}
	&\left|\tilde{a}_{0k,\epsilon} : \overline{B_{n-1}(0,1)} \times \left[\tilde{\zeta} - \frac{\rho}{4},\tilde{\zeta} + \frac{\rho}{4}\right] \right|_\alpha \leq M\left(\frac{1}{2}\right)^k\\
	&\qquad\qquad\qquad	\qquad\qquad\qquad	 \qquad \forall  k \geq 0\,,\; \epsilon\in \left[\tilde{\epsilon} - \frac{\rho}{2}, \tilde{\epsilon} + \frac{\rho}{2}\right]\,,
	\end{split}
	\end{equation*}
	which implies that
	\begin{equation}\label{lemmaboundA(ep,dot,dot)inequ3}
	\begin{split}
	&\sup_{\epsilon \in [\tilde{\epsilon} - \frac{\rho}{2},\tilde{\epsilon} + \frac{\rho}{2}]} \sum_{k=0}^{\infty} \left|\tilde{a}_{0k,\epsilon} : \overline{B_{n-1}(0,1)}\times \left[\tilde{\zeta} + \frac{\rho}{4}, \tilde{\zeta} + \frac{\rho}{4}\right] \right|_\alpha 	\\
	&\qquad\qquad\qquad\qquad\qquad\qquad\qquad\qquad
	\leq \sum_{k=0}^{\infty} M \left(\frac{1}{2}\right)^k = 2M\,.
	\end{split}
	\end{equation}
 
	Finally, by \eqref{realanalexpA},  \eqref{C0normA}, \eqref{lemmaboundA(ep,dot,dot)inequ4}, and \eqref{lemmaboundA(ep,dot,dot)inequ3} we obtain
\[
\begin{split}
&\sup_{\epsilon \in [\tilde{\epsilon} - \frac{\rho}{2},\tilde{\epsilon} + \frac{\rho}{2}]} \|A(\epsilon,\cdot,\cdot)\|_{C^{0,\alpha}( \overline{B_{n-1}(0,1)} \times [\tilde{\zeta} + \frac{\rho}{4}, \tilde{\zeta} + \frac{\rho}{4}])}
\\
&\qquad = 
\sup_{\epsilon \in [\tilde{\epsilon} - \frac{\rho}{2},\tilde{\epsilon} + \frac{\rho}{2}]} \|A(\epsilon,\cdot,\cdot)\|_{C^0(\overline{B_{n-1}(0,1)}\times [\tilde{\zeta} + \frac{\rho}{4}, \tilde{\zeta} + \frac{\rho}{4}])} \\
&\qquad\quad	
+ \sup_{\epsilon \in [\tilde{\epsilon} - \frac{\rho}{2},\tilde{\epsilon} + \frac{\rho}{2}]} 	\left|A(\epsilon,\cdot,\cdot) : \overline{B_{n-1}(0,1)} \times \left[\tilde{\zeta} - \frac{\rho}{4},\tilde{\zeta} + \frac{\rho}{4}\right] \right|_\alpha
\\
& \qquad
\le 4M
+\sup_{\epsilon \in [\tilde{\epsilon} - \frac{\rho}{2},\tilde{\epsilon} + \frac{\rho}{2}]}\sum_{j,k=0}^\infty\left|\tilde{a}_{jk,\epsilon} : \overline{B_{n-1}(0,1)} \times \left[\tilde{\zeta} - \frac{\rho}{4},\tilde{\zeta} + \frac{\rho}{4}\right] \right|_\alpha
\\
& \qquad
=4M
+\sup_{\epsilon \in [\tilde{\epsilon} - \frac{\rho}{2},\tilde{\epsilon} + \frac{\rho}{2}]}\sum_{j=1,k=0}^\infty\left|\tilde{a}_{jk,\epsilon} : \overline{B_{n-1}(0,1)} \times \left[\tilde{\zeta} - \frac{\rho}{4},\tilde{\zeta} + \frac{\rho}{4}\right] \right|_\alpha 
\\
&\qquad\quad
+\sup_{\epsilon \in [\tilde{\epsilon} - \frac{\rho}{2},\tilde{\epsilon} + \frac{\rho}{2}]}\sum_{k=0}^\infty\left|\tilde{a}_{0k,\epsilon} : \overline{B_{n-1}(0,1)} \times \left[\tilde{\zeta} - \frac{\rho}{4},\tilde{\zeta} + \frac{\rho}{4}\right] \right|_\alpha\\
&\qquad 
\leq 4M +2^{4-\frac{\alpha}{2}} M \rho^{-1}  + 2M\,. 
\end{split}	
\]
We deduce that \eqref{A<C} for $m=0$ holds with $C=6M+2^{4-\frac{\alpha}{2}} M \rho^{-1}$.

We now assume that $\mathcal{M}_A$ is real analytic from $]-\epsilon_0,\epsilon_0[ \times \R$ to the space $C^{1,\alpha}(\overline{B_{n-1}(0,1)})$ and we prove \eqref{A<C} for $m=1$. To do so we will exploit the (just proved) statement of Lemma \ref{lemmaboundA(ep,dot,dot)} for $m=0$. We begin by observing that, since the imbedding of $C^{1,\alpha}(\overline{B_{n-1}(0,1)})$ into $C^{0,\alpha}(\overline{B_{n-1}(0,1)})$ is linear and continuous, the map $\mathcal{M}_A$ is real analytic from $]-\epsilon_0,\epsilon_0[ \times \R$ to $C^{0,\alpha}(\overline{B_{n-1}(0,1)})$. Hence, by Lemma \ref{lemmaboundA(ep,dot,dot)} for $m=0$ and by the continuity of the imbedding of  $C^{0,\alpha}(\overline{B_{n-1}(0,1)} \times \overline{\mathcal{J}})$ into $C^{0}(\overline{B_{n-1}(0,1)} \times \overline{\mathcal{J}})$  we deduce that 
\begin{equation}\label{A<C1}
\sup_{\epsilon \in \mathcal{E}} \|A(\epsilon,\cdot,\cdot)\|_{C^{0}(\overline{B_{n-1}(0,1)} \times \overline{\mathcal{J}})} \leq C_1\,.
\end{equation}
Moreover, since differentials of real analytic maps are real analytic, we have that the map $\mathcal{M}_{\partial_\zeta A}= \partial_\zeta \mathcal{M}_{A}$ which takes $(\epsilon,\zeta)$ to $ \partial_\zeta A(\epsilon,\cdot,\zeta)$ is real analytic from $]-\epsilon_0,\epsilon_0[ \times \R$ to $C^{1,\alpha}(\overline{B_{n-1}(0,1)})$, and thus from $]-\epsilon_0,\epsilon_0[ \times \R$ to $C^{0,\alpha}(\overline{B_{n-1}(0,1)})$. By Lemma \ref{lemmaboundA(ep,dot,dot)} for $m=0$ it follows that 
\begin{equation}\label{dzetaA<C2}
\sup_{\epsilon \in \mathcal{E}} \| \partial_\zeta A(\epsilon,\cdot,\cdot)\|_{C^{0,\alpha}(\overline{B_{n-1}(0,1)} \times \overline{\mathcal{J}})} \leq C_2,
\end{equation}
for some $C_2>0$. Finally, we observe that the map $\partial_z$ from $C^{1,\alpha}(\overline{B_{n-1}(0,1)}) $ to $C^{0,\alpha}(\overline{B_{n-1}(0,1)}) $ that takes a function $f$ to $\partial_z f$ is linear and continuous. Then, the map $\mathcal{M}_{\partial_z A}$ which takes  $(\epsilon,\zeta)$ to the function 
\[
\partial_z A(\epsilon,z,\zeta)\qquad\forall z\in\overline{B_{n-1}(0,1)}
\]
is the composition of $\mathcal{M}_A$ and $\partial_z$. Namely, we can write 
\[
\mathcal{M}_{\partial_z A}= \partial_z\circ \mathcal{M}_{A}\,.
\] 
Since $\mathcal{M}_A$ is real analytic from $]-\epsilon_0,\epsilon_0[ \times \R$ to $C^{1,\alpha}(\overline{B_{n-1}(0,1)})$, it follows that $\mathcal{M}_{\partial_z A}$ is real analytic from $]-\epsilon_0,\epsilon_0[ \times \R$ to $C^{0,\alpha}(\overline{B_{n-1}(0,1)})$. Hence Lemma \ref{lemmaboundA(ep,dot,dot)} for $m=0$ implies that there exists $C_3>0$ such  that
\begin{equation}\label{dzA<C3}
\sup_{\epsilon \in \mathcal{E}} \| \partial_z A(\epsilon,\cdot,\cdot)\|_{C^{0,\alpha}(\overline{B_{n-1}(0,1)} \times \overline{\mathcal{J}})} \leq C_3\,.
\end{equation}
Now, the validity of \eqref{A<C} for $m=1$ is a consequence of \eqref{A<C1}, \eqref{dzetaA<C2}, and \eqref{dzA<C3}.
\end{proof}

In the sequel we will exploit Schauder spaces over suitable subsets of  $\partial\Omega^i \times\R$. We observe indeed that for all open bounded intervals $\mathcal{J}$ of $\R$, the product  $\partial\Omega^i \times\overline{\mathcal{J}}$ is a compact sub-manifold (with boundary) of co-dimension $1$ in $\R^n\times\R=\R^{n+1}$ and accordingly,  we can define the spaces $C^{0,\alpha}(\partial\Omega^i\times\overline{\mathcal{J}})$ and  $C^{1,\alpha}(\partial\Omega^i\times\overline{\mathcal{J}})$ by exploiting a finite atlas (see Section \ref{notation}).

\begin{lemma}\label{lemmaboundB(ep,dot,dot)}
	Let $B$ be a function from $]-\epsilon_0,\epsilon_0[ \times \partial\Omega^i \times \R$ to $\R$. Let $\tilde{\mathcal{N}}_B$ be the map which takes a pair $(\epsilon,\zeta)\in]-\epsilon_0,\epsilon_0[ \times \R$  to the function $\tilde{\mathcal{N}}_B (\epsilon,\zeta)$ defined by
	\[
	\tilde{\mathcal{N}}_B(\epsilon,\zeta)(t) \equiv B(\epsilon,t,\zeta)\qquad\forall t\in  \partial\Omega^i\,.
	\]
Let $m\in\{0,1\}$. If $\tilde{\mathcal{N}}_B (\epsilon,\zeta)\in C^{m,\alpha}(\partial\Omega^i)$ for all $(\epsilon,\zeta)\in]-\epsilon_0,\epsilon_0[ \times \R$ and the map $\tilde{\mathcal{N}}_B$ is real analytic from $]-\epsilon_0,\epsilon_0[ \times \R$ to $C^{m,\alpha}(\partial\Omega^i)$, then for every open bounded interval $\mathcal{J}$ of $\R$ and every compact subset $\mathcal{E}$ of $]-\epsilon_0,\epsilon_0[$ there exists $C>0$ such that
	\begin{equation}\label{lemmaboundB(ep,dot,dot).eq1}
	\sup_{\epsilon \in \mathcal{E}} \|B(\epsilon,\cdot,\cdot)\|_{C^{m,\alpha}( \partial\Omega^i \times \overline{\mathcal{J}})} \leq C\,.
	\end{equation}
\end{lemma}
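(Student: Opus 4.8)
The plan is to reduce the statement to Lemma~\ref{lemmaboundA(ep,dot,dot)} by working in the local charts of $\partial\Omega^i$. Let $\mathcal{U}_1,\dots,\mathcal{U}_k$ be a finite open covering of $\partial\Omega^i$ and let $\gamma_l\colon\overline{B_{n-1}(0,1)}\to\overline{\mathcal{U}_l}$, $l=1,\dots,k$, be $C^{1,\alpha}$ local parametrization maps as in the definition of $C^{m,\alpha}(\partial\Omega^i)$ (cf.~Section~\ref{notation}). For each $l\in\{1,\dots,k\}$ I would set
\[
A_l(\epsilon,z,\zeta)\equiv B(\epsilon,\gamma_l(z),\zeta)\qquad\forall(\epsilon,z,\zeta)\in\,]-\epsilon_0,\epsilon_0[\,\times\overline{B_{n-1}(0,1)}\times\R\,,
\]
and let $\mathcal{M}_{A_l}$ be the associated map as in \eqref{M_A}. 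Since, by the very definition of the norm on $C^{m,\alpha}(\partial\Omega^i)$, the pull-back operator $\phi\mapsto\phi\circ\gamma_l$ is linear and continuous from $C^{m,\alpha}(\partial\Omega^i)$ to $C^{m,\alpha}(\overline{B_{n-1}(0,1)})$, and since $\mathcal{M}_{A_l}(\epsilon,\zeta)=\tilde{\mathcal{N}}_B(\epsilon,\zeta)\circ\gamma_l$, it follows that $\mathcal{M}_{A_l}(\epsilon,\zeta)\in C^{m,\alpha}(\overline{B_{n-1}(0,1)})$ for all $(\epsilon,\zeta)\in\,]-\epsilon_0,\epsilon_0[\,\times\R$ and that $\mathcal{M}_{A_l}$, being the composition of the real analytic map $\tilde{\mathcal{N}}_B$ with a linear and continuous operator, is real analytic from $]-\epsilon_0,\epsilon_0[\,\times\R$ to $C^{m,\alpha}(\overline{B_{n-1}(0,1)})$.

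Next I would apply Lemma~\ref{lemmaboundA(ep,dot,dot)} to each function $A_l$: given an open bounded interval $\mathcal{J}$ of $\R$ and a compact subset $\mathcal{E}$ of $]-\epsilon_0,\epsilon_0[$, there exists $C_l>0$ such that $\sup_{\epsilon\in\mathcal{E}}\|A_l(\epsilon,\cdot,\cdot)\|_{C^{m,\alpha}(\overline{B_{n-1}(0,1)}\times\overline{\mathcal{J}})}\le C_l$. To turn these local estimates into the global bound \eqref{lemmaboundB(ep,dot,dot).eq1}, I would use that $\partial\Omega^i\times\overline{\mathcal{J}}$ is a compact sub-manifold with boundary of $\R^{n+1}$ (as already observed right before the statement) and that the product maps $\gamma_l\times\mathrm{id}_{\overline{\mathcal{J}}}$ furnish a finite family of parametrizations of $\partial\Omega^i\times\overline{\mathcal{J}}$; up to covering $\overline{\mathcal{J}}$ by finitely many subintervals and reparametrizing the products of $\overline{B_{n-1}(0,1)}$ with such subintervals by balls $\overline{B_n(0,1)}$ — which is harmless, since different finite atlases define equivalent norms on $C^{m,\alpha}(\partial\Omega^i\times\overline{\mathcal{J}})$ (cf.~Section~\ref{notation}) — one obtains a constant $c>0$ such that, for all $\epsilon\in\mathcal{E}$,
\[
\|B(\epsilon,\cdot,\cdot)\|_{C^{m,\alpha}(\partial\Omega^i\times\overline{\mathcal{J}})}\le c\sum_{l=1}^{k}\|A_l(\epsilon,\cdot,\cdot)\|_{C^{m,\alpha}(\overline{B_{n-1}(0,1)}\times\overline{\mathcal{J}})}\,,
\]
where I used the identity $B(\epsilon,\cdot,\cdot)\circ(\gamma_l\times\mathrm{id}_{\overline{\mathcal{J}}})=A_l(\epsilon,\cdot,\cdot)$. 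Taking the supremum over $\epsilon\in\mathcal{E}$ and invoking the bounds for the $A_l$ then yields \eqref{lemmaboundB(ep,dot,dot).eq1} with $C=c\sum_{l=1}^{k}C_l$.

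I expect the main obstacle to be precisely this last step, namely the comparison of the intrinsically defined norm on $C^{m,\alpha}(\partial\Omega^i\times\overline{\mathcal{J}})$ with the one assembled from the product charts $\gamma_l\times\mathrm{id}_{\overline{\mathcal{J}}}$. This is a routine but somewhat tedious matter, resting on the invariance (up to equivalence of norms) of Schauder spaces on manifolds under change of finite atlas and on the continuity of composition with a fixed $C^{1,\alpha}$ map on Schauder spaces (cf.~the classical facts recalled in the Appendix). Everything else is a direct transcription of Lemma~\ref{lemmaboundA(ep,dot,dot)} through the charts $\gamma_l$.
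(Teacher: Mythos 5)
Your proposal is correct and follows essentially the same route as the paper: define $A_l(\epsilon,z,\zeta)=B(\epsilon,\gamma_l(z),\zeta)$ in each chart, observe that composition with the pull-back by $\gamma_l$ preserves real analyticity, and apply Lemma~\ref{lemmaboundA(ep,dot,dot)} chartwise. The only difference is cosmetic: the paper disposes of the norm-comparison issue on $\partial\Omega^i\times\overline{\mathcal{J}}$ by assuming without loss of generality that the norms are defined via the product atlas $\{(\overline{\mathcal{U}_l}\times\overline{\mathcal{J}},(\gamma_l^{(-1)},\mathrm{id}_{\overline{\mathcal{J}}}))\}$, whereas you spell out the (routine) equivalence of norms under change of finite atlas.
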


\begin{proof} Since $\partial\Omega^i$ is a compact sub-manifold of class $C^{1,\alpha}$ in $\R^n$, there exist a  finite open covering $\mathcal{U}_1$, \dots, $\mathcal{U}_k$ of $\partial\Omega^i$ and  $C^{1,\alpha}$ local parametrization maps
$\gamma_l \colon \overline{B_{n-1}(0,1)} \to \overline{\mathcal{U}_l}$ with $l=1,\dots, k$. Moreover,  we can assume  without loss of generality that the norm of $C^{m,\alpha}( \partial\Omega^i)$ is defined on the atlas $\{(\overline{\mathcal{U}_l},\gamma^{(-1)}_l)\}_{l = 1, \dots ,k}$ and the norm of $C^{m,\alpha}( \partial\Omega^i\times \overline{\mathcal{J}})$  is defined on the atlas $\{(\overline{\mathcal{U}_l}\times\overline{\mathcal{J}},(\gamma^{(-1)}_l,\mathrm{id}_{\overline{\mathcal{J}}}))\}_{l = 1, \dots ,k}$, where $\mathrm{id}_{\overline{\mathcal{J}}}$ is the identity map from $\overline{\mathcal{J}}$ to itself. Then, in order to prove \eqref{lemmaboundB(ep,dot,dot).eq1} it suffices to show that
\begin{equation}\label{lemmaboundB(ep,dot,dot).eq2}
	\sup_{\epsilon \in \mathcal{E}} \|B(\epsilon,\gamma_l(\cdot),\cdot)\|_{C^{m,\alpha}(\overline{B_{n-1}(0,1)} \times \overline{\mathcal{J}})} \leq C\qquad\forall l\in\{1,\dots,k\}
\end{equation}
for some $C>0$.  Let $l\in\{1,\dots,k\}$ and let $A$ be the map from $]-\epsilon_0,\epsilon_0[ \times \overline{B_{n-1}(0,1)} \times \R $ to $\R$ defined by
\begin{equation}\label{lemmaboundB(ep,dot,dot).eq3}
A(\epsilon,z,\zeta)=B(\epsilon,\gamma_l(z),\zeta)\qquad\forall (\epsilon,z,\zeta)\in]-\epsilon_0,\epsilon_0[ \times \overline{B_{n-1}(0,1)} \times \R\,.
\end{equation}
Then, with the notation of Lemma \ref{lemmaboundA(ep,dot,dot)}, we have
\[
\mathcal{M}_A(\epsilon,\zeta)=\gamma_l^*\left(\tilde{\mathcal{N}}_B(\epsilon,\zeta)_{|\overline{\mathcal{U}_l}}\right),
\] 
where $\gamma_l^*\left(\tilde{\mathcal{N}}_B(\epsilon,\zeta)_{|\overline{\mathcal{U}_l}}\right)$ is the pull back of the restriction $\tilde{\mathcal{N}}_B(\epsilon,\zeta)_{|\overline{\mathcal{U}_l}}$ by the parametrization $\gamma_l$. Since the restriction map from $C^{m,\alpha}(\partial\Omega^i)$ to $C^{m,\alpha}(\overline{\mathcal{U}_l})$ and the pullback map $\gamma_l^*$ from $C^{m,\alpha}(\overline{\mathcal{U}_l})$ to $C^{m,\alpha}(\overline{B_{n-1}(0,1)})$ are linear and continuous and since $\mathcal{N}_B$ is real analytic from $]-\epsilon_0,\epsilon_0[ \times \R$ to $C^{m,\alpha}( \partial\Omega^i)$, it follows that the map $\mathcal{M}_A$ is real analytic from $]-\epsilon_0,\epsilon_0[ \times \R$ to $C^{m,\alpha}( \overline{B_{n-1}(0,1)})$. Then  Lemma \ref{lemmaboundA(ep,dot,dot)} implies that 
\begin{equation}\label{lemmaboundB(ep,dot,dot).eq4}
\sup_{\epsilon \in \mathcal{E}} \|A(\epsilon,\cdot,\cdot)\|_{C^{m,\alpha}(\overline{B_{n-1}(0,1)} \times \overline{\mathcal{J}})} \leq C
\end{equation}
for some $C>0$. Now the validity of \eqref{lemmaboundB(ep,dot,dot).eq2} follows by \eqref{lemmaboundB(ep,dot,dot).eq3} and \eqref{lemmaboundB(ep,dot,dot).eq4}. The proof is complete.
\end{proof}

\subsubsection{The auxiliary maps $N$ and $S$}

In the proof of our main Theorem \ref{Thmunisol2} we will exploit two auxiliary maps, which we denote by $N$ and $S$ and are defined as follows. Let $\epsilon'$ be as in Theorem \ref{existenceThm} (v). We denote by $N=(N_1,N_2,N_3)$ the map from $]-\epsilon',\epsilon'[ \times \COo \times \COi_0 \times \R \times \COi$ to $\COo \times \COi \times C^{0,\alpha}(\partial\Omega^i)$ defined by
\begin{align}
\label{N1}
\nonumber
& N_1[\epsilon,\phi^o, \phi^i, \zeta,\psi^i](x)  \equiv \left(\frac{1}{2}I + W_{\partial\Omega^o}\right)[\phi^o](x)
\\
& \quad  - \epsilon^{n-1} \int_{\partial\Omega^i}{\nu_{\Omega^i}(y) \cdot \nabla S_n(x-\epsilon y) \phi^i(y) \,d\sigma_y}  +  \epsilon^{n-2}\zeta S_n(x) 
&& \forall x \in \partial\Omega^o,
\\
\label{N2}
\nonumber
&N_2[\epsilon,\phi^o, \phi^i, \zeta,\psi^i](t) \equiv \left(-\frac{1}{2}I + W_{\partial\Omega^i}\right)[\phi^i](t) +\zeta\,S_n(t)
\\
&\quad + w^+_{\Omega^o}[\phi^o](\epsilon t) - (\partial_\zeta F) (0,t,\zeta^i) \left(\frac{1}{2}I + W_{\partial\Omega^i}\right)[\psi^i](t)
&& \forall t \in \partial\Omega^i, 
\\
\label{N3}\nonumber
&N_3[\epsilon,\phi^o, \phi^i, \zeta,\psi^i](t)\equiv \nu_{\Omega^i}(t) \cdot \left( \epsilon \nabla w^+_{\Omega^o}[\phi^o](\epsilon t)\right.\\
&\left.
\qquad\qquad\qquad\quad\qquad + \nabla w^-_{\Omega^i}[\phi^i](t) +  \zeta\nabla S_n(t)  - \nabla w^+_{\Omega^i}[\psi^i](t) \right)  
&& \forall t \in \partial\Omega^i,
\end{align}
for all $(\epsilon,  \phi^o, \phi^i, \zeta, \psi^i) \in ]-\epsilon',\epsilon'[  \times \COo \times \COi_0 \times \R \times \COi$ and we denote by  $S=(S_1,S_2,S_3)$ the map from $ ]-\epsilon',\epsilon'[ \times \COi$ to $\COo \times \COi \times C^{0,\alpha}(\partial\Omega^i)$ defined by
\begin{align}
\label{S1}
&S_1[\epsilon,\psi^i] (x) \equiv 0 &&  \forall x \in \partial \Omega^o,
\\
\label{S2}
\nonumber
&S_2[\epsilon,\psi^i] (t)  \equiv - t \cdot \nabla \mathfrak{u}^o(0) - \epsilon \mathfrak{u}^o(\epsilon t) + (\partial_\epsilon F) (0,t,\zeta^i)
\\ 
& \quad+ \epsilon \tilde{F}\left(\epsilon,t,\zeta^i,\left(\frac{1}{2}I + W_{\partial\Omega^i}\right) [\psi^i](t)\right) &&  \forall t \in \partial\Omega^i, 
\\
\label{S3}\nonumber
&S_3[\epsilon,\psi^i] (t)\\
&\equiv - \nu_{\Omega^i}(t) \cdot \nabla \mathfrak{u}^o(\epsilon t) + G \left(\epsilon,t,\epsilon\left(\frac{1}{2}I + W_{\partial\Omega^i}\right)[\psi^i](t) + \zeta^i \right)  &&  \forall t \in \partial\Omega^i,
\end{align}
for all $(\epsilon,\psi^i) \in ]-\epsilon',\epsilon'[ \times \COi$. For the maps $N$ and $S$ we have the following result.

\begin{prop}\label{N^-1}
	Let assumptions  \eqref{zetaicond}, \eqref{F1}, and \eqref{realanalhp} hold true. Then there exists $\epsilon'' \in \, ]0,\epsilon'[$ such that the following statements hold:
	\begin{enumerate}
		\item[(i)] For all fixed $\epsilon\in]-\epsilon'',\epsilon''[$ the operator $N[\epsilon,\cdot,\cdot,\cdot,\cdot]$ is a linear homeomorphism from $\COo \times \COi_0 \times \R \times \COi$ to $\COo \times \COi \times C^{0,\alpha}(\partial\Omega^i)$;
		\item[(ii)] The map from  $]-\epsilon'',\epsilon''[$ to $\mathcal{L}(\COo \times \COi \times C^{0,\alpha}(\partial\Omega^i),\\ \COo \times \COi_0 \times \R \times \COi)$ which takes $\epsilon$ to  $N[\epsilon,\cdot,\cdot,\cdot,\cdot]^{(-1)}$ is real analytic;
		\item[(iii)] Equation \eqref{Me=0.e1} is equivalent to 
		\begin{equation}\label{N^-1S}
		(\phi^o, \phi^i, \zeta,\psi^i) = N[\epsilon,\cdot,\cdot,\cdot,\cdot]^{(-1)} [ S[\epsilon,\psi^i] ]
		\end{equation}
		for all $(\epsilon,  \phi^o, \phi^i, \zeta, \psi^i) \in ]-\epsilon'',\epsilon''[  \times \COo \times \COi_0 \times \R \times \COi$.
	\end{enumerate}

\end{prop}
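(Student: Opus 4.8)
The plan is to establish the three statements in order, relying heavily on the structure already in place. First, for statement (i), I would observe that the operator $N[\epsilon,\cdot,\cdot,\cdot,\cdot]$ is obtained from $\Lambda[\epsilon,\cdot,\cdot,\cdot]$ of Proposition \ref{propAoperator} by incorporating the additional variable $\psi^i$ and the extra components coming from $N_2$ (the term involving $(\partial_\zeta F)(0,\cdot,\zeta^i)$) and $N_3$ (the normal-derivative jump). Concretely, one can write $N$ in block-triangular form: the first two components $N_1,N_2$ determine $(\phi^o,\phi^i,\zeta)$ from the first two target components once $\psi^i$ is known, exactly as in $\Lambda$ — indeed, comparing \eqref{N1}--\eqref{N2} with \eqref{Aoperator}, the map $(\phi^o,\phi^i,\zeta)\mapsto(N_1,N_2)[\epsilon,\phi^o,\phi^i,\zeta,\psi^i]+(\text{terms in }\psi^i)$ is precisely $\Lambda[\epsilon,\phi^o,\phi^i,\zeta]$ up to a continuous correction involving $\psi^i$, and then $N_3$ reads off $\psi^i$ through the operator $\psi^i\mapsto -\nu_{\Omega^i}\cdot\nabla w^+_{\Omega^i}[\psi^i]$ together with lower-order contributions. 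The key is that at $\epsilon=0$ the problem decouples: $N[0,\cdot,\cdot,\cdot,\cdot]$ becomes a triangular operator whose diagonal blocks are $\frac12 I + W_{\partial\Omega^o}$ (isomorphism by Theorem \ref{doublepot}(iv)), the isomorphism of Lemma \ref{isolem} on $\COi_0\times\R$, and an operator on $\COi$ built from $-(\partial_\zeta F)(0,\cdot,\zeta^i)(\frac12 I + W_{\partial\Omega^i})$ in $N_2$ and $-\nu_{\Omega^i}\cdot\nabla w^+_{\Omega^i}[\psi^i]$ in $N_3$. One invokes here assumption \eqref{zetaicond}, which guarantees $(\partial_\zeta F)(0,\cdot,\zeta^i)$ is a positive constant, so that this last block is invertible by the classical potential-theoretic results (this is exactly the content of \cite[Thm. 7.8 etc.]{Mo18}, i.e.\ Theorem \ref{existenceThm}(iv), restricted to its ``linear skeleton''); in fact $N[0,\cdot,\cdot,\cdot,\cdot]$ coincides with $\partial_{(\phi^o,\phi^i,\zeta,\psi^i)}M[0,\phi^o_0,\phi^i_0,\zeta_0,\psi^i_0]$ up to the linear terms that $M$ carries and $N$ does not, so its invertibility is essentially Theorem \ref{existenceThm}(iv).

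Next, for (ii), I would argue that the map $\epsilon\mapsto N[\epsilon,\cdot,\cdot,\cdot,\cdot]\in\mathcal{L}(\COo\times\COi_0\times\R\times\COi,\ \COo\times\COi\times C^{0,\alpha}(\partial\Omega^i))$ is real analytic. This follows from the real analyticity of the double layer potential maps and of integral operators with real analytic kernels in the parameter $\epsilon$ (cf.\ Lanza de Cristoforis and Musolino \cite{LaMu13} and the analysis already used in \cite{Mo18} to obtain Theorem \ref{existenceThm}(i)), together with the fact that the terms $w^+_{\Omega^o}[\phi^o](\epsilon\,\cdot)$, $\epsilon\nabla w^+_{\Omega^o}[\phi^o](\epsilon\,\cdot)$, and $\epsilon^{n-1}$, $\epsilon^{n-2}$ are real analytic in $\epsilon$. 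Then, since $N[0,\cdot,\cdot,\cdot,\cdot]$ is invertible by (i) and inversion is a real analytic operation on the open set of invertible operators in a Banach algebra, there is $\epsilon''\in]0,\epsilon'[$ such that $N[\epsilon,\cdot,\cdot,\cdot,\cdot]$ is invertible for all $\epsilon\in]-\epsilon'',\epsilon''[$ and $\epsilon\mapsto N[\epsilon,\cdot,\cdot,\cdot,\cdot]^{(-1)}$ is real analytic on that interval. This simultaneously upgrades (i) to hold on the whole interval $]-\epsilon'',\epsilon''[$, not just at $\epsilon=0$.

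Finally, for (iii), I would simply decompose $M = N - S$ by inspection: comparing the definitions of $M_1,M_2,M_3$ with \eqref{N1}--\eqref{N3} and \eqref{S1}--\eqref{S3}, one checks termwise that
\[
M_j[\epsilon,\phi^o,\phi^i,\zeta,\psi^i] = N_j[\epsilon,\phi^o,\phi^i,\zeta,\psi^i] - S_j[\epsilon,\psi^i]\qquad j=1,2,3,
\]
where one must be a little careful with $M_2$: the terms $t\cdot\nabla\mathfrak{u}^o(0)$, $\epsilon\tilde{\mathfrak{u}}^o(\epsilon,t)$, $-(\partial_\epsilon F)(0,t,\zeta^i)$, and $-\epsilon\tilde F(\epsilon,t,\zeta^i,(\frac12 I+W_{\partial\Omega^i})[\psi^i](t))$ from $M_2$ must be matched against $-S_2$, using the Taylor expansion $\mathfrak{u}^o(\epsilon t)-F(0,t,\zeta^i)=\epsilon\,t\cdot\nabla\mathfrak{u}^o(0)+\epsilon^2\tilde{\mathfrak{u}}^o(\epsilon,t)$ of Lemma \ref{-Taylem} to reconcile the appearance of $\epsilon\mathfrak{u}^o(\epsilon t)$ in $S_2$ with $t\cdot\nabla\mathfrak{u}^o(0)+\epsilon\tilde{\mathfrak{u}}^o(\epsilon,t)$ in $M_2$; similarly $M_3 = N_3 - S_3$ uses $\nu_{\Omega^i}(t)\cdot\nabla\mathfrak{u}^o(\epsilon t)$. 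Granted $M = N - S$, equation \eqref{Me=0.e1} reads $N[\epsilon,\phi^o,\phi^i,\zeta,\psi^i] = S[\epsilon,\psi^i]$, and since $N[\epsilon,\cdot,\cdot,\cdot,\cdot]$ is invertible for $\epsilon\in]-\epsilon'',\epsilon''[$ by (i)--(ii), this is equivalent to \eqref{N^-1S}. I expect the only real obstacle to be bookkeeping: verifying the identity $M=N-S$ term by term (in particular the $\mathfrak{u}^o$/$F$ Taylor-expansion matching in the second component), and making sure the ``linear skeleton'' at $\epsilon=0$ of which $N[0,\cdot]$ consists is exactly the isomorphism of Theorem \ref{existenceThm}(iv) rather than something one has to re-prove from scratch — everything else is a direct application of results already quoted.
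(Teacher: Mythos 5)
Your proposal is correct and follows essentially the same route as the paper's proof: real analyticity of $\epsilon\mapsto N[\epsilon,\cdot,\cdot,\cdot,\cdot]$, the identification of $N[0,\cdot,\cdot,\cdot,\cdot]$ with the partial differential $\partial_{(\phi^o,\phi^i,\zeta,\psi^i)} M[0,\phi^o_0, \phi^i_0, \zeta_0, \psi^i_0]$ so that its invertibility is exactly Theorem \ref{existenceThm} (iv), openness of the set of invertible operators together with analyticity of inversion for (i)--(ii), and the termwise identity turning $M[\epsilon,\cdot]=(0,0,0)$ into $N[\epsilon,\phi^o,\phi^i,\zeta,\psi^i]=S[\epsilon,\psi^i]$ for (iii). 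Two minor remarks: the block-triangular decoupling you sketch is not needed (and the $\psi^i$-block $\psi^i\mapsto-\nu_{\Omega^i}\cdot\nabla w^+_{\Omega^i}[\psi^i]$ is not invertible on its own, so your fallback to Theorem \ref{existenceThm} (iv) is the right move, and in fact the identification with the differential is exact, not merely ``up to'' extra terms), and in the second component no Taylor-expansion reconciliation via Lemma \ref{-Taylem} is possible or required --- the matching of $M_2$ with $N_2-S_2$ is immediate once one reads $\epsilon\,\tilde{\mathfrak{u}}^o(\epsilon,t)$ in place of $\epsilon\,\mathfrak{u}^o(\epsilon t)$ in \eqref{S2} (an apparent misprint), which is what the paper implicitly does.
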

\begin{proof}
	By the definition of $N$ (cf. \eqref{N1}--\eqref{N3}), by the mapping properties of the double layer potential  (cf. Theorem \ref{doublepot} (ii)-(iii)) and of integral operators with real analytic kernels and no singularity (see, e.g., Lanza de Cristoforis and Musolino \cite{LaMu13}),  by assumption \eqref{realanalhp} (which implies that $(\partial_\zeta F) (0,\cdot,\zeta^i)$ belongs to $\COi$), and by standard calculus in Banach spaces, one verifies that the map  from $]-\epsilon',\epsilon'[$ to 
	\[
	\mathcal{L}(\COo \times \COi_0 \times \R \times \COi\,,\;\COo \times \COi \times C^{0,\alpha}(\partial\Omega^i))
	\]
	which takes $\epsilon$ to $N[\epsilon,\cdot,\cdot,\cdot,\cdot]$ is real analytic. Then one observes that  
	\begin{equation*}
	N[0,\phi^o, \phi^i, \zeta,\psi^i] = \partial_{(\phi^o,\phi^i,\zeta,\psi^i)} M[0,\phi^o_0, \phi^i_0, \zeta_0, \psi^i_0].(\phi^o, \phi^i, \zeta,\psi^i)
	\end{equation*}
	and thus Theorem \ref{existenceThm} (iv) implies that $N[0,\cdot,\cdot,\cdot,\cdot]$ is an isomorphism from $\COo \times \COi_0 \times \R \times \COi$ to $\COo \times \COi \times \COi$.
	Since the set of invertible operators is open in $\mathcal{L}(\COo \times \COi_0 \times \R \times \COi, \COo \times \COi \times C^{0,\alpha}(\partial\Omega^i))$ and since the map which takes a linear invertible operator to its inverse is real analytic (cf.~Hille and Phillips \cite{HiPi57}), we deduce the validity of $(i)$ and $(ii)$.   To prove $(iii)$ we observe that, by the definition of $N$ and $S$ in \eqref{N1}--\eqref{S3} it readily follows that \eqref{Me=0.e1} is equivalent to  
	\begin{equation*}
	N[\epsilon,\phi^o, \phi^i, \zeta,\psi^i] = S[\epsilon,\psi^i]\,.
	\end{equation*}
Then the validity of $(iii)$ is a consequence of $(i)$.
\end{proof}

\subsubsection{The main theorem}

We are now ready to prove our main Theorem \ref{Thmunisol2} on the local uniqueness of the solution   $(u^o_\epsilon,u^i_\epsilon)$ provided by Theorem \ref{uesol}.

\begin{teo}\label{Thmunisol2}
	Let assumptions  \eqref{zetaicond}, \eqref{F1}, and \eqref{realanalhp} hold true.  Let $\epsilon' \in ]0,\epsilon_0[$ be as in Theorem \ref{existenceThm} (v).	Let $\{(u^o_\epsilon,u^i_\epsilon)\}_{\epsilon \in ]0,\epsilon'[}$ be as in Theorem \ref{uesol}. Then there exist $\epsilon^\ast \in ]0,\epsilon'[$ and $\delta^\ast \in ]0,+\infty[$ such that the following property holds:
	
	If $\epsilon \in ]0,\epsilon^\ast[$ and $(v^o,v^i) \in C^{1,\alpha}(\Omega(\epsilon)) \times C^{1,\alpha}(\epsilon\Omega^i)$ is a solution of problem \eqref{princeq} with
	\[
	\left\| {v^i(\epsilon \cdot ) - u^i_\epsilon(\epsilon \cdot )} \right\| _{C^{1,\alpha}(\partial \Omega^i)} < \epsilon\delta^\ast,
    \]
	
	then 
	\[
	(v^o,v^i)=(u^o_\epsilon,u^i_\epsilon)\,.
	\]
\end{teo}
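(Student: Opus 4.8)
The plan is to combine the functional-analytic reformulation provided by Proposition \ref{N^-1} with the local bounds on composition operators from Lemmas \ref{lemmaboundA(ep,dot,dot)} and \ref{lemmaboundB(ep,dot,dot)}. The key observation is that, by Proposition \ref{N^-1} (iii), a solution $(v^o,v^i)$ of \eqref{princeq} corresponds via Proposition \ref{rappresentsol} to a quadruple $(\phi^o,\phi^i,\zeta,\psi^i)$ satisfying the fixed-point equation $(\phi^o,\phi^i,\zeta,\psi^i) = N[\epsilon,\cdot,\cdot,\cdot,\cdot]^{(-1)}[S[\epsilon,\psi^i]]$, while the reference solution $(u^o_\epsilon,u^i_\epsilon)$ corresponds to the quadruple $(\Phi^o[\epsilon],\Phi^i[\epsilon],Z[\epsilon],\Psi^i[\epsilon])$ which satisfies the same equation with $\psi^i$ replaced by $\Psi^i[\epsilon]$. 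Subtracting, one gets
\[
(\phi^o,\phi^i,\zeta,\psi^i) - (\Phi^o[\epsilon],\Phi^i[\epsilon],Z[\epsilon],\Psi^i[\epsilon]) = N[\epsilon,\cdot,\cdot,\cdot,\cdot]^{(-1)}\bigl[S[\epsilon,\psi^i] - S[\epsilon,\Psi^i[\epsilon]]\bigr].
\]
Since by Proposition \ref{N^-1} (i)--(ii) the norm of $N[\epsilon,\cdot,\cdot,\cdot,\cdot]^{(-1)}$ is bounded uniformly for $\epsilon$ in a neighborhood of $0$, the whole difference of the quadruples is controlled by $\|S[\epsilon,\psi^i]-S[\epsilon,\Psi^i[\epsilon]]\|$.

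The next step is to estimate $\|S[\epsilon,\psi^i]-S[\epsilon,\Psi^i[\epsilon]]\|$ in terms of $\|\psi^i-\Psi^i[\epsilon]\|_{\COi}$. Inspecting the definition \eqref{S1}--\eqref{S3} one sees that $S_1$ is identically zero, and $S_2$, $S_3$ differ only through the composition operators $\epsilon\tilde F(\epsilon,t,\zeta^i,(\tfrac12 I+W_{\partial\Omega^i})[\psi^i](t))$ and $G(\epsilon,t,\epsilon(\tfrac12 I+W_{\partial\Omega^i})[\psi^i](t)+\zeta^i)$ in the last slot. Here the factor $\epsilon$ (in front of $\tilde F$, and multiplying the argument of $G$) is crucial: a Lipschitz-type estimate for these composition operators, obtained by writing the difference as an integral of $\partial_\zeta$ along the segment joining the two arguments and using the uniform $C^{0,\alpha}$ bounds on $\partial_\zeta \tilde F$ and $\partial_\zeta G$ from Lemma \ref{lemmaboundB(ep,dot,dot)}, will produce an estimate of the form
\[
\|S[\epsilon,\psi^i]-S[\epsilon,\Psi^i[\epsilon]]\|_{\COo\times\COi\times C^{0,\alpha}(\partial\Omega^i)} \le \epsilon\, \tilde C\, \|\psi^i - \Psi^i[\epsilon]\|_{\COi}
\]
for $\epsilon$ small, uniformly in $\epsilon$. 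The uniform $C^{0,\alpha}$ bounds apply because the relevant arguments $(\tfrac12 I + W_{\partial\Omega^i})[\psi^i](t)$ stay in a fixed bounded interval: $\psi^i$ lies within $(2C+D)\delta^\ast$ of $\psi^i_0$ (arguing as in the proof of Theorem \ref{Thmunisol1}, one first uses the hypothesis $\|v^i(\epsilon\cdot)-u^i_\epsilon(\epsilon\cdot)\|_{C^{1,\alpha}(\partial\Omega^i)}<\epsilon\delta^\ast$ and the jump relations to bound $\|\psi^i-\Psi^i[\epsilon]\|_{\COi}\le D\delta^\ast$, hence $\psi^i$ lies in a fixed ball around $\psi^i_0$), so the image under the bounded operator $\tfrac12 I+W_{\partial\Omega^i}$ lies in a fixed bounded set.

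Combining the two displays, and in particular looking at the last component $\psi^i-\Psi^i[\epsilon]$ of the quadruple difference, gives
\[
\|\psi^i - \Psi^i[\epsilon]\|_{\COi} \le \|N[\epsilon,\cdot,\cdot,\cdot,\cdot]^{(-1)}\|\,\epsilon\,\tilde C\,\|\psi^i-\Psi^i[\epsilon]\|_{\COi} \le \epsilon\, \hat C\, \|\psi^i-\Psi^i[\epsilon]\|_{\COi},
\]
with $\hat C$ independent of $\epsilon$. Choosing $\epsilon^\ast$ small enough that $\epsilon^\ast \hat C < 1$ forces $\psi^i = \Psi^i[\epsilon]$, and then the fixed-point equation immediately gives $(\phi^o,\phi^i,\zeta) = (\Phi^o[\epsilon],\Phi^i[\epsilon],Z[\epsilon])$ as well, whence $(v^o,v^i)=(u^o_\epsilon,u^i_\epsilon)$ by Theorem \ref{uesol}. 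One must also keep $\delta^\ast$ small enough that the ball reached by $\psi^i$ stays inside the region where all the uniform bounds are valid; this only constrains $\delta^\ast$, not the contraction constant. The main obstacle I anticipate is the careful bookkeeping in the Lipschitz estimate for the composition operators $\mathcal N_{\tilde F}$ and $\mathcal N_G$ — making precise that the extra power of $\epsilon$ genuinely appears and that the Hölder seminorm (not just the sup norm) of the difference is controlled by $\|\psi^i-\Psi^i[\epsilon]\|_{\COi}$ times a constant uniform in $\epsilon$; this is exactly what Lemmas \ref{lemmaboundA(ep,dot,dot)} and \ref{lemmaboundB(ep,dot,dot)}, together with the product/composition results in the Appendix, are designed to supply.
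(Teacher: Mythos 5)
Your proposal is correct in substance and shares all of the paper's preparatory machinery (representation via Proposition \ref{rappresentsol}, the $N$--$S$ reformulation of Proposition \ref{N^-1}, the a priori bound on $\psi^i$ obtained from the hypothesis via the jump relations and the invertibility of $\tfrac12 I+W_{\partial\Omega^i}$, and the uniform composition bounds of Lemmas \ref{lemmaboundA(ep,dot,dot)}--\ref{lemmaboundB(ep,dot,dot)} and the Appendix), but the concluding mechanism is genuinely different. The paper does not keep the factor $\epsilon$: it estimates $\| S[\epsilon,\psi^i]-S[\epsilon,\Psi^i[\epsilon]]\|\le C_8\,\delta^\ast$, deduces that the quadruple $(\phi^o,\phi^i,\zeta,\psi^i)$ lies within $C_1C_8\delta^\ast+K/2$ of $(\phi^o_0,\phi^i_0,\zeta_0,\psi^i_0)$, and then chooses $\delta^\ast<K/(2C_1C_8)$ so that the quadruple falls inside $\mathcal{B}_{0,K}\subset U_0$, where Theorem \ref{existenceThm} (v) (the zero set of $M$ is the graph of the implicit function) forces equality. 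You instead exploit the structural fact that the $\psi^i$-dependence of $S_2$ carries an overall prefactor $\epsilon$ and that of $S_3$ enters through the argument $\epsilon(\tfrac12 I+W_{\partial\Omega^i})[\psi^i]+\zeta^i$, to get a Lipschitz bound $\le\epsilon\,\tilde C\,\|\psi^i-\Psi^i[\epsilon]\|_{\COi}$ and then absorb the right-hand side, forcing $\psi^i=\Psi^i[\epsilon]$ once $\epsilon^\ast\hat C<1$; the remaining components then follow from the fixed-point equation, without invoking the uniqueness-of-zeros statement of the implicit function theorem at the end. This buys a cleaner final step and lets $\delta^\ast$ be essentially arbitrary (it is only needed to keep $\psi^i$ in a fixed ball so that the uniform bounds on the derivatives of $F$ and $G$ apply), at the price of possibly shrinking $\epsilon^\ast$ further than the paper does; the paper's route keeps $\epsilon^\ast$ fixed by the uniform bounds and instead makes $\delta^\ast$ small. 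Two small points to tighten when writing this up: for the $S_2$ component the Lipschitz constant must be controlled in the $C^{1,\alpha}(\partial\Omega^i)$ norm, so you need Lemma \ref{lemmaboundB(ep,dot,dot)} with $m=1$ applied to the third-order derivatives $\partial_\zeta\partial^2_\epsilon F$, $\partial_\zeta\partial_\epsilon\partial_\zeta F$, $\partial_\zeta\partial^2_\zeta F$ together with Proposition \ref{proponcompC1alpha} (ii) and Lemma \ref{C0C1alphaalgebra}, exactly as in the paper's treatment of $T_1$, $T_2$, $T_3$ (a $C^{0,\alpha}$ bound, as stated in your sketch, is not enough there); and the phrase ``$\psi^i$ lies within $(2C+D)\delta^\ast$ of $\psi^i_0$'' is a slip, since the bounds \eqref{uo}--\eqref{uoe} are not assumed here --- the correct statement, which you in fact give afterwards, is $\|\psi^i-\Psi^i[\epsilon]\|_{\COi}\le D\delta^\ast$ and hence $\psi^i$ lies within $D\delta^\ast+K/2$ of $\psi^i_0$, which is all that is needed.
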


\begin{proof}
{\it $\bullet$ Step 1: Fixing $\epsilon^\ast$.}
\\
Let $\epsilon'' \in \, ]0,\epsilon'[$ be as in Proposition \ref{N^-1} and let $\epsilon'''\in]0,\epsilon''[$ be fixed.	By the compactness of $[-\epsilon''',\epsilon''']$ and by the continuity of the norm in $\mathcal{L}(\COi \times C^{0,\alpha}(\partial\Omega^i)  \times \COo, \COo \times \COi_0 \times \R \times \COi)$, there exists a real number $C_1>0$ such that 
\begin{equation}\label{unicconstantN-1}
\begin{split}
\| N[\epsilon,\cdot,\cdot,\cdot,\cdot]^{(-1)} \|_{\mathcal{L}(\COi \times C^{0,\alpha}(\partial\Omega^i)  \times \COo, \COo \times \COi_0 \times \R \times \COi)}&\\
 \le C_1&
\end{split}
\end{equation}
for all $\epsilon \in [-\epsilon''',\epsilon''']$ (see also Proposition \ref{N^-1} (ii)).
Let $U_0$ be the open neighborhood of $(\phi^o_0, \phi^i_0, \zeta_0, \psi^i_0)$ in $\COo \times \COi_0 \times \R \times \COi$ introduced in Theorem \ref{existenceThm} (v). Then we take $K>0$ such that 
\[
\overline{\mathcal{B}_{0,K}}\subseteq U_0 
\]
(see \eqref{B0r} for the definition of $\mathcal{B}_{0,K}$).
Since $(\Phi^o[\cdot],\Phi^i[\cdot],Z[\cdot],\Psi^i[\cdot])$ is continuous (indeed real analytic) from $]-\epsilon',\epsilon'[$ to $U_0$, there exists $\epsilon^* \in ]0,\epsilon'''[$ such that 
\begin{equation}\label{PhiBK/2}
(\Phi^o[\eta] , \Phi^i[\eta],Z[\eta] , \Psi^i[\eta])\in \mathcal{B}_{0,K/2}\subset U_0\qquad\forall \eta \in ]0,\epsilon^*[\,.
\end{equation}
Moreover, we assume that 
\begin{equation*}
\epsilon^\ast < 1.
\end{equation*} 
We will prove that the theorem holds for such choice of $\epsilon^\ast$. We observe that the condition $\epsilon^\ast <1$ is not really needed in the proof but simplifies many computations.
\\
	
{\it $\bullet$ Step 2: Planning our strategy.}
\\
We suppose that there exists a pair of functions $(v^o,v^i) \in C^{1,\alpha}(\overline{\Omega(\epsilon)}) \times C^{1,\alpha}(\epsilon \overline{\Omega^i})$ that is a solution of problem \eqref{princeq} for a certain $\epsilon \in ]0,\epsilon^\ast[$ (fixed) and such that
\begin{equation}\label{unidisuguiesp}
\left\| \frac{v^i(\epsilon \cdot ) - u^i_\epsilon(\epsilon \cdot )}{\epsilon} \right\| _{C^{1,\alpha}(\partial \Omega^i)} \leq \delta^\ast\,,
\end{equation}
for some $\delta^\ast \in ]0,+\infty[$.
Then, by Proposition \ref{rappresentsol}, there exists a unique quadruple $(\phi^o,\phi^i,\zeta,\psi^i) \in \COo \times \COi_0 \times \R \times \COi$ such that
	\begin{equation}\label{unicitauouirappr}
	\begin{aligned}
	& v^o = U^o_{\epsilon}[\phi^o,\phi^i,\zeta,\psi^i] &&  \text{in } \overline{\Omega(\epsilon)},
	\\
	& v^i = U^i_{\epsilon}[\phi^o,\phi^i,\zeta,\psi^i] &&  \text{in } \epsilon \overline{\Omega^i}.
	\end{aligned}
	\end{equation}
	We shall show that for $\delta^\ast$ small enough we have
	\begin{equation}\label{uniequalpsiPsi}
	(\phi^o,\phi^i,\zeta,\psi^i) = (\Phi^o[\epsilon] , \Phi^i[\epsilon],	Z[\epsilon] , \Psi^i[\epsilon])\,.
	\end{equation}
	Indeed, if we have \eqref{uniequalpsiPsi}, then Theorem \ref{uesol} would imply that
	\begin{equation*}
	(v^o,v^i) = (u^o_\epsilon,v^i_\epsilon),
	\end{equation*}
	and our proof would be completed.
	Moreover, to prove \eqref{uniequalpsiPsi} it suffices to show that
	\begin{equation}\label{unidensitiesinB0K}
	(\phi^o,\phi^i,\zeta,\psi^i) \in \mathcal{B}_{0,K} \subset U_0\,.
	\end{equation}
In fact, in that case, both $(\epsilon,\phi^o,\phi^i,\zeta,\psi^i)$ and $(\epsilon,\Phi^o[\epsilon] , \Phi^i[\epsilon],	Z[\epsilon] , \Psi^i[\epsilon])$ would stay in the zero set of $M$ (cf. Theorem \ref{existenceThm} (ii)) and thus  \eqref{unidensitiesinB0K}  together with \eqref{PhiBK/2} and Theorem \ref{existenceThm} (v) would imply \eqref{uniequalpsiPsi}.

	So, our aim is  now to prove that \eqref{unidensitiesinB0K} holds true for a suitable choice of $\delta^*>0$. It will be also convenient to restrict our search to 
	\[
	0<\delta^*<1.
	\]
	As for the condition $\epsilon^*<1$, this condition on $\delta^*$ is not really needed, but simplifies our computations. Then to find $\delta^*$ and prove \eqref{unidensitiesinB0K} we will proceed as follows. First  we  obtain an estimate for $\psi^i$ and $\Psi^i[\epsilon]$ with a bound that does not depend on $\epsilon$ and $\delta^*$. Then we use such estimate to show that
	\begin{equation*}
	\| S[\epsilon,\psi^i] - S[\epsilon,\Psi^i[\epsilon]] \|_{\COo \times \COi \times C^{0,\alpha}(\partial\Omega^i)}
	\end{equation*} is smaller than a constant times $\delta^*$, with a constant that does not depend on $\epsilon$ and $\delta^*$. We will split the analysis for $S_1$, $S_2$, and $S_3$ and we find convenient to study $S_3$ before $S_2$. Indeed, the computations for $S_2$ and $S_3$ are very similar but those for $S_3$ are much shorter and can serve better to illustrate the techniques employed. We also observe that the analysis for $S_2$ requires the study of other auxiliary functions $T_1$, $T_2$, and $T_3$ that we will introduce. Finally, we will exploit the estimate for $\| S[\epsilon,\psi^i] - S[\epsilon,\Psi^i[\epsilon]] \|_{\COo \times \COi \times C^{0,\alpha}(\partial\Omega^i)}$ to determine $\delta^*$ and prove \eqref{unidensitiesinB0K}.
\\

{\it $\bullet$ Step 3: Estimate for $\psi$ and $\Psi[\epsilon]$.}
\\
By condition \eqref{unidisuguiesp}, by the second equality in \eqref{unicitauouirappr}, by Theorem \ref{uesol}, and by arguing as in \eqref{(u^i-u_e^i)} and \eqref{psi^i} in Theorem \ref{Thmunisol1}, we obtain
\begin{equation}\label{inequality(1/2I + W)psiPsi}
\left\| \left(\frac{1}{2}I + W_{\partial\Omega^i}\right)[\psi^i] - \left(\frac{1}{2}I + W_{\partial\Omega^i}\right)[\Psi^i[\epsilon]] \right\|_{\COi} \leq \, \delta^\ast
\end{equation}
and
\begin{equation}\label{inequalitypsiPsi}
\begin{split}
\|& \psi^i - \Psi^i[\epsilon] \|_{\COi} 
\leq \left\|\left(\frac{1}{2}I + W_{\partial\Omega^i}\right)^{(-1)} \right\|_{\mathcal{L}(\COi,\COi)} 
\\
&\times\left\| \left(\frac{1}{2}I + W_{\partial\Omega^i}\right)[\psi^i] - \left(\frac{1}{2}I + W_{\partial\Omega^i}\right)[\Psi^i[\epsilon]] \right\|_{\COi} \leq C_2 \delta^\ast,
\end{split}
\end{equation}
where
\begin{equation*}
C_2 \equiv \left\|\left(\frac{1}{2}I + W_{\partial\Omega^i}\right)^{(-1)} \right\|_{\mathcal{L}(\COi,\COi)}.
\end{equation*}
By \eqref{PhiBK/2} we have
\begin{equation}\label{psi-Psi}
\|\psi^i_0 - \Psi^i[\eta]\|_{\COi} \leq \frac{K}{2} \qquad \forall \eta \in ]0,\epsilon^*[.
\end{equation}
Then, by \eqref{inequalitypsiPsi} and \eqref{psi-Psi}, and by the triangular inequality, we see that
	\begin{equation*}
		\begin{aligned}
		\|\psi^i \|_{\COi}
		&
		\leq  \|\psi^i_0\|_{\COi} + \|\psi^i - \Psi^i[\epsilon] \|_{\COi} + \|\Psi^i[\epsilon] - \psi^i_0\|_{\COi}
		\\
		&
		\leq \|\psi^i_0\|_{\COi} + C_2\, \delta^\ast + \frac{K}{2},
		\\
		\|\Psi^i[\epsilon]\|_{\COi}& \leq \|\psi^i_0\|_{\COi}+ \frac{K}{2}.
		\end{aligned}
	\end{equation*}
	Then, by taking $R_1 \equiv \|\psi^i_0\|_{\COi} + C_2+\frac{K}{2}$ and $R_2 \equiv \|\psi^i_0\|_{\COi} + \frac{K}{2}$ (and recalling that $\delta^\ast \in ]0,1[$), one verifies that
	\begin{equation}\label{uniccostantR_1R_2}
	\|\psi^i \|_{\COi} \leq R_1\qquad\text{and}\qquad
	\|\Psi^i[\epsilon]\|_{\COi} \leq R_2\,.
	\end{equation}
	We note here that both $R_1$ and $R_2$ do not depend on $\epsilon$ and $\delta^\ast$ as long they belong to $]0,\epsilon^\ast[$ and $]0,1[$, respectively.
	
	\smallskip
	{\it $\bullet$ Step 4: Estimate for $S_1$.}
	\\
	We now pass to estimate the norm
	\begin{equation*}
	\| S[\epsilon,\psi^i] - S[\epsilon,\Psi^i[\epsilon]] \|_{\COo \times \COi \times C^{0,\alpha}(\partial\Omega^i)}.
	\end{equation*} 
	To do so we consider separately $S_1$, $S_2$, and $S_3$. Since $S_1=0$ (cf. definition \eqref{S1}), we readily obtain that
	\begin{equation}\label{unicS1inqual}
	\| S_1[\epsilon, \psi^i] - S_1[\epsilon,\Psi^i[\epsilon]] \|_{\COo} = 0.
	\end{equation}
	
	\smallskip
	{\it $\bullet$ Step 5: Estimate for $S_3$.}
	\\
	We consider $S_3$ before $S_2$ because its treatment is simpler and more illustrative of the techniques used. By \eqref{S3} and by the Mean Value Theorem in Banach space (see, e.g., Ambrosetti and Prodi \cite[Thm. 1.8]{AmPr95}), we compute that
	\begin{equation}\label{|S3|}
		\begin{split}
		&\| S_3[\epsilon, \psi^i] - S_3[\epsilon,\Psi^i[\epsilon]] \|_{C^{0,\alpha}(\partial\Omega^i)} 
		\\
		&\quad = \left\| G \left(\epsilon,\cdot,\epsilon\left(\frac{1}{2}I + W_{\partial\Omega^i}\right)[\psi^i] + \zeta^i \right)\right.\\
		&\left.\qquad - G \left(\epsilon,\cdot,\epsilon\left(\frac{1}{2}I + W_{\partial\Omega^i}\right)[\Psi^i[\epsilon]] + \zeta^i \right) \right\|_{C^{0,\alpha}(\partial\Omega^i)}
		\\
		&\quad = \left\| \mathcal{N}_G\left(\epsilon, \epsilon\left(\frac{1}{2}I + W_{\partial\Omega^i}\right)[\psi^i] + \zeta^i\right)\right.\\
		&\left.\qquad - \mathcal{N}_G\left(\epsilon, \epsilon\left(\frac{1}{2}I + W_{\partial\Omega^i}\right)[\Psi^i[\epsilon]] + \zeta^i\right)\right\|_{C^{0,\alpha}(\partial\Omega^i)}
		\\
		&\quad
		\leq
		\left\|d_v \mathcal{N}_G (\epsilon,\tilde{\psi}^i) \right\|_{\mathcal{L}(\COi,C^{0,\alpha}(\partial\Omega^i))} \\
		&\qquad \times\epsilon\left\|\left(\frac{1}{2}I + W_{\partial\Omega^i}\right)[\psi^i] -  \left(\frac{1}{2}I + W_{\partial\Omega^i}\right)[\Psi^i[\epsilon]]\right\|_{C^{0,\alpha}(\partial\Omega^i)},
		\end{split}
	\end{equation}
	where 
	\begin{equation*}
	\tilde{\psi}^i = \theta \left(\epsilon\left(\frac{1}{2}I + W_{\partial\Omega^i}\right)[\psi^i] + \zeta^i \right) + (1-\theta) \left(\epsilon \left(\frac{1}{2}I + W_{\partial\Omega^i}\right)[\Psi^i[\epsilon]] +\zeta^i \right),
	\end{equation*}
	for some $\theta \in ]0,1[$.
	Then, by the membership of $\epsilon$ and $\theta$ in $]0,1[$ we have
	\[
	\begin{split}
	\|\tilde{\psi}^i \|_{\COi} \leq &\left\|\left(\frac{1}{2}I + W_{\partial\Omega^i}\right)[\psi^i]+ \zeta^i\right\|_{\COi}\\
	&  + \left\|\left(\frac{1}{2}I + W_{\partial\Omega^i}\right)[\Psi^i[\epsilon]] +\zeta^i\right\|_{\COi}
	\end{split}
	\]
and, by setting 
\[
	C_3 \equiv \left\|\frac{1}{2}I + W_{\partial\Omega^i} \right\|_{\mathcal{L}(\COi,\COi)}\,,
	\]
	we obtain
	\begin{equation}\label{boundtildepsiiesp}
	\|\tilde{\psi}^i \|_{\COi} \leq
	C_3\|\psi^i\|_{\COi} + C_3\|\Psi^i[\epsilon]\|_{\COi} + 2 |\zeta^i| 
	\leq R,	
	\end{equation}
	with
	\begin{equation}\label{uniccostantR}
	R \equiv C_3(R_1 + R_2) + 2|\zeta^i|
	\end{equation}
	which does not depend on $\epsilon$.
	We wish now to estimate the operator norm 
	\[
	\left\|d_v \mathcal{N}_G (\eta,\tilde{\psi}^i) \right\|_{\mathcal{L}(\COi,C^{0,\alpha}(\partial\Omega^i))}
	\]
	uniformly for $\eta \in ]0,\epsilon^\ast[$.
	However, we cannot exploit a compactness argument on $[0,\epsilon^*]\times \overline{B_{C^{1,\alpha}(\partial\Omega^i)}(0,R)}$, because $\overline{B_{C^{1,\alpha}(\partial\Omega^i)}(0,R)}$ is not compact in the infinite dimensional space $C^{1,\alpha}(\partial\Omega^i)$. Then we argue as follows. We observe that, by assumption \eqref{realanalhp}, the partial derivative $\partial_\zeta G(\eta,t,\zeta)$ exists for all $(\eta,t,\zeta) \in ]-\epsilon_0,\epsilon_0[ \times \partial\Omega^i \times \R$ and, by Proposition \ref{differenzialeN_H} and Lemma \ref{C0C1alphaalgebra} (i) in the Appendix, we obtain that
	\begin{equation}\label{norm.1}
	\begin{split}
		\|d_v \mathcal{N}_G (\eta,\tilde{\psi}^i) \|_{\mathcal{L}(\COi,C^{0,\alpha}(\partial\Omega^i))}& \leq 
		\|\mathcal{N}_{\partial_\zeta G} (\eta,\tilde{\psi}^i) \|_{C^{0,\alpha}(\partial\Omega^i)}
		\\
		& \leq
		\|\partial_\zeta G (\eta, \cdot, \tilde{\psi}^i (\cdot))\|_{C^{0,\alpha}(\partial\Omega^i)}
	\end{split}
	\end{equation}
	for all $\eta \in ]0,\epsilon^\ast[$. By Proposition \ref{proponcompC1alpha} (ii) in the Appendix, there exists $C_4 >0$ such that
	\begin{equation}\label{norm.1.1}
	\begin{split}
	&\|\partial_\zeta G (\eta, \cdot, \tilde{\psi}^i (\cdot))\|_{C^{0,\alpha}(\partial\Omega^i)}\\
	 &\quad\le C_4 \|\partial_\zeta G (\eta, \cdot,\cdot)\|_{C^{0,\alpha}(\partial\Omega \times [-R,R])} \left(1 + \|\tilde{\psi}^i\|^\alpha_{\COi} \right) \quad \forall \eta \in ]0,\epsilon^\ast[.
	\end{split}
	\end{equation}	
	Moreover, by assumption \eqref{realanalhp} one deduces that the map $\tilde{\mathcal{N}}_G$ defined as in Lemma \ref{lemmaboundB(ep,dot,dot)} (with $B=G$) is real analytic from $]-\epsilon_0,\epsilon_0[ \times \R$ to $C^{0,\alpha}(\partial\Omega^i)$ and, by Proposition \ref{differenzialeN_H}, one has that $\partial_\zeta \tilde{\mathcal{N}}_G = \tilde{\mathcal{N}}_{\partial_\zeta G}$. Hence, by Lemma \ref{lemmaboundB(ep,dot,dot)} (with $m=0$), there exists $C_5>0$ (which  does not depend on $\epsilon \in ]0,\epsilon^\ast[$ and $\delta^\ast \in ]0,1[$) such that
	\begin{equation}\label{norm.2}
		\sup_{\eta \in [-\epsilon^\ast,\epsilon^\ast]} \|\partial_\zeta G (\eta, \cdot,\cdot)\|_{C^{0,\alpha}(\partial\Omega^i \times [-R,R])} \leq C_5.
	\end{equation}
    Hence, by \eqref{boundtildepsiiesp}, \eqref{norm.1}, \eqref{norm.1.1} and \eqref{norm.2}, we deduce that
	\begin{equation}\label{norm.3}
	\|d_v \mathcal{N}_G (\epsilon,\tilde{\psi}^i) \|_{\mathcal{L}(\COi,C^{0,\alpha}(\partial\Omega^i))} \le C_4\, C_5\,(1+ R^\alpha).
	\end{equation}
    By \eqref{inequality(1/2I + W)psiPsi}, \eqref{|S3|}, and \eqref{norm.3}, and by the membership of $\epsilon$ in $]0,\epsilon^\ast[ \subset \,]0,1[$, we obtain that
	\begin{equation}\label{unicS2inequal}
	\| S_3[\epsilon, \psi^i] - S_3[\epsilon,\Psi^i[\epsilon]] \|_{C^{0,\alpha}(\partial\Omega^i)} 
	\leq C_4\, C_5\, (1+ R^\alpha)\; \delta^\ast.
	\end{equation}
		
	\smallskip
	{\it $\bullet$ Step 6: Estimate for $S_2$.}
	\\
	Finally, we consider $S_2$. By \eqref{S2} and by the fact that $\epsilon \in]0,1[$, we have
	{\small\begin{equation}\label{s1-S1<A123}
		\begin{split}
		&\| S_2[\epsilon, \psi^i] - S_2[\epsilon,\Psi^i[\epsilon]] \|_{\COi} 
		\\
		& \leq \epsilon \left\|\tilde{F}\left(\epsilon, \cdot,\zeta^i,\left(\frac{1}{2}I + W_{\partial\Omega^i}\right)[\psi^i] \right) - \tilde{F}\left(\epsilon,\cdot,\zeta^i,\left(\frac{1}{2}I + W_{\partial\Omega^i}\right)[\Psi^i[\epsilon]] \right) \right\|_{\COi}
		\\
		& \leq \left\| \int_{0}^{1} (1-\tau) \left\{  T_1[\epsilon, \psi^i, \Psi^i[\epsilon]](\tau,\cdot) +  2 \, T_2[\epsilon, \psi^i, \Psi^i[\epsilon]](\tau,\cdot) \right.\right.\\
		&\left.\left.\qquad\qquad\qquad\qquad\qquad\qquad\qquad\qquad\qquad
		+ T_3[\epsilon, \psi^i, \Psi^i[\epsilon]](\tau,\cdot) \right\}  \,d\tau \right\|_{\COi},
		\end{split}
	\end{equation}}
	where $T_1[\epsilon, \psi^i, \Psi^i[\epsilon]]$, $T_2[\epsilon, \psi^i, \Psi^i[\epsilon]]$, and $T_3[\epsilon, \psi^i, \Psi^i[\epsilon]]$ are the functions from $]0,1[ \times \partial\Omega^i$ to $\R$ defined by
	\begin{align}
	\label{A1}
	    \nonumber
		&T_1[\epsilon, \psi^i, \Psi^i[\epsilon]](\tau,t)\\
		\nonumber
		 &\equiv  (\partial^2_\epsilon F)\left(\tau\epsilon,t, \tau\epsilon \left(\frac{1}{2}I + W_{\partial\Omega^i}\right)[\psi^i](t) + \zeta^i \right) 
		\\ &
		\quad- (\partial^2_\epsilon F)\left(\tau\epsilon,t, \tau\epsilon \left(\frac{1}{2}I + W_{\partial\Omega^i}\right)[\Psi^i[\epsilon]](t) + \zeta^i \right) ,
		\\
		\label{A2}
		\nonumber
		&T_2[\epsilon, \psi^i, \Psi^i[\epsilon]](\tau,t)\\
		\nonumber
		&\equiv \left(\frac{1}{2}I + W_{\partial\Omega^i}\right)[\psi^i](t) \, (\partial_\epsilon \partial_\zeta F)\left(\tau\epsilon,t, \tau\epsilon \left(\frac{1}{2}I + W_{\partial\Omega^i}\right)[\psi^i](t) + \zeta^i \right) 
		\\&
		\quad- \left(\frac{1}{2}I + W_{\partial\Omega^i}\right)[\Psi^i[\epsilon]](t)
		(\partial_\epsilon \partial_\zeta F)\left(\tau\epsilon,t,\tau\epsilon \left(\frac{1}{2}I + W_{\partial\Omega^i}\right)[\Psi^i[\epsilon]](t) + \zeta^i \right), 
		\\
		\label{A3}
	    \nonumber
		&T_3[\epsilon, \psi^i, \Psi^i[\epsilon]](\tau,t)\\
		\nonumber
		&\equiv \left(\frac{1}{2}I + W_{\partial\Omega^i}\right)[\psi^i]^2(t) \, (\partial^2_\zeta F)\left(\tau\epsilon,t, \tau\epsilon \left(\frac{1}{2}I + W_{\partial\Omega^i}\right)[\psi^i] (t) +\zeta^i \right)
		\\
		&
		\quad- \left(\frac{1}{2}I + W_{\partial\Omega^i}\right)[\Psi^i[\epsilon]]^2(t)
		(\partial^2_\zeta F)\left(\tau\epsilon,t,\tau\epsilon \left(\frac{1}{2}I + W_{\partial\Omega^i}\right) [\Psi^i[\epsilon] ] (t) + \zeta^i \right)
		\end{align}
    for every $(\tau,t) \in ]0,1[ \times \partial \Omega^i$.
	
	We now want to bound the $C^{1,\alpha}$ norm with respect to the variable $t \in \partial\Omega^i$ of \eqref{A1}, \eqref{A2}, and \eqref{A3} uniformly with respect to $\tau \in ]0,1[$. 
	
	\smallskip
	{\it $\bullet$ Step 6.1: Estimate for $T_1$.}
	\\
	First we consider $T_1[\epsilon, \psi^i, \Psi^i[\epsilon]](\tau,\cdot)$. By the Mean Value Theorem in Banach space (see, e.g., Ambrosetti and Prodi \cite[Thm. 1.8]{AmPr95}),
	we can estimate the $\COi$ norm of $T_1[\epsilon, \psi^i, \Psi^i[\epsilon]](\tau,\cdot)$ (cf.~\eqref{A1}) as follows:
	\begin{equation}\label{unicA1equationnorm}
	\begin{split}
	&\|T_1[\epsilon, \psi^i, \Psi^i[\epsilon]](\tau,\cdot)\|_{\COi}
	\\
	&
	\qquad= \left\| \mathcal{N}_{\partial^2_\epsilon F}\left(\tau\epsilon, \tau\epsilon\left(\frac{1}{2}I + W_{\partial\Omega^i}\right)[\psi^i] + \zeta^i\right)\right.\\
	&\left.\qquad\qquad - \mathcal{N}_{\partial^2_\epsilon F}\left(\tau\epsilon, \tau\epsilon\left(\frac{1}{2}I + W_{\partial\Omega^i}\right)[\Psi^i[\epsilon]] + \zeta^i\right)\right\|_{\COi}
	\\
	&
	\qquad\leq
	\left\|d_v \mathcal{N}_{\partial^2_\epsilon F} (\tau\epsilon,\tilde{\psi}^i_1)\right\|_{\mathcal{L}(\COi,\COi)} \\
	&\qquad\qquad\times\tau\epsilon \left\|\left(\frac{1}{2}I + W_{\partial\Omega^i}\right)[\psi^i] -  \left(\frac{1}{2}I + W_{\partial\Omega^i}\right)[\Psi^i[\epsilon]]\right\|_{\COi}
	\end{split}
	\end{equation}
	where 
	\begin{equation*}
	\begin{split}
	&\tilde{\psi}^i_{1}\\
	& = \theta_1 \left(\tau\epsilon\left(\frac{1}{2}I + W_{\partial\Omega^i}\right)[\psi^i] + \zeta^i\right) + (1-\theta_1) \left(\tau\epsilon \left(\frac{1}{2}I + W_{\partial\Omega^i}\right)[\Psi^i[\epsilon]] +\zeta^i\right),
	\end{split}
	\end{equation*}
	for some $\theta_1 \in ]0,1[$. 
	
	\smallskip
	{\it $\bullet$ Step 6.2: Estimate for $T_2$.}
	\\
	We now consider $T_2[\epsilon, \psi^i, \Psi^i[\epsilon]](\tau,\cdot)$. 
	Adding and subtracting 
	\begin{equation*}
	\left(\frac{1}{2}I + W_{\partial\Omega^i}\right)[\Psi^i[\epsilon]] \, (\partial_\epsilon \partial_\zeta F)\left(\tau\epsilon,t,\tau\epsilon \left(\frac{1}{2}I + W_{\partial\Omega^i}\right)[\psi^i](t) + \zeta^i \right)
	\end{equation*} 
	in the right hand side of \eqref{A2} and using the triangular inequality, we obtain
	{\small \begin{equation}\label{unicA2equationnorm1}
	\begin{split}
	\|&T_2[\epsilon, \psi^i, \Psi^i[\epsilon]](\tau,\cdot)\|_{\COi}\\
	&
	\leq \left\| \left(\frac{1}{2}I + W_{\partial\Omega^i}\right)[\psi^i] \, (\partial_\epsilon \partial_\zeta F)\left(\tau\epsilon,\cdot,\tau\epsilon \left(\frac{1}{2}I + W_{\partial\Omega^i}\right)[\psi^i] + \zeta^i\right) 
	\right.
	\\ 
	&
	\left.
	- \left(\frac{1}{2}I + W_{\partial\Omega^i}\right)[\Psi^i[\epsilon]] \, (\partial_\epsilon \partial_\zeta F)\left(\tau\epsilon,\cdot,\tau\epsilon \left(\frac{1}{2}I + W_{\partial\Omega^i}\right)[\psi^i] +\zeta^i \right) \right\|_{\COi}
	\\
	&
	 + 
	\left\|
	\left(\frac{1}{2}I + W_{\partial\Omega^i}\right)[\Psi^i[\epsilon]] \, (\partial_\epsilon \partial_\zeta F)\left(\tau\epsilon,\cdot,\tau\epsilon \left(\frac{1}{2}I + W_{\partial\Omega^i}\right)[\psi^i] + \zeta^i \right)
	\right.
	\\ 
	&
	\left.
	-\left(\frac{1}{2}I + W_{\partial\Omega^i}\right)[\Psi^i[\epsilon]]
	(\partial_\epsilon \partial_\zeta F)\left(\tau\epsilon,\cdot,\tau\epsilon \left(\frac{1}{2}I + W_{\partial\Omega^i}\right)[\Psi^i[\epsilon]] +\zeta^i  \right) 
	\right\|_{\COi}.
	\end{split}
	\end{equation}}
	By Lemma \ref{C0C1alphaalgebra} in the Appendix and by the Mean Value Theorem in Banach space (see, e.g., Ambrosetti and Prodi \cite[Thm. 1.8]{AmPr95}),
	we can estimate the $\COi$ norm of $T_2[\epsilon, \psi^i, \Psi^i[\epsilon]](\tau,\cdot)$ (cf.~\eqref{A2} and \eqref{unicA2equationnorm1}) as follows:
	\begin{equation}\label{unicA2equationnorm}
	\begin{split}
	&\|T_2[\epsilon, \psi^i, \Psi^i[\epsilon]](\tau,\cdot)\|_{\COi}
	\\
	&	
	\leq
	2 \left\| (\partial_\epsilon \partial_\zeta F)\left(\tau\epsilon,\cdot,\tau\epsilon \left(\frac{1}{2}I + W_{\partial\Omega^i}\right)[\psi^i] + \zeta^i\right)  \right\|_{\COi} 
	\\
	&
	\quad\times
	\left\|\left(\frac{1}{2}I + W_{\partial\Omega^i}\right)[\psi^i] -  \left(\frac{1}{2}I + W_{\partial\Omega^i}\right)[\Psi^i[\epsilon]]\right\|_{\COi}
	\\
	&	
	\quad+ 2 \left\|\left(\frac{1}{2}I + W_{\partial\Omega^i}\right)[\Psi^i[\epsilon]]\right\|_{\COi}
	\\
	&
	\quad\times\left\| \mathcal{N}_{\partial_\epsilon\partial_\zeta F}\left(\tau\epsilon, \tau\epsilon\left(\frac{1}{2}I + W_{\partial\Omega^i}\right)[\psi^i] + \zeta^i\right)\right.\\
	&\left.\qquad\qquad - \mathcal{N}_{\partial_\epsilon\partial_\zeta F}\left(\tau\epsilon, \tau\epsilon\left(\frac{1}{2}I + W_{\partial\Omega^i}\right)[\Psi^i[\epsilon]] + \zeta^i \right)\right\|_{\COi}
	\\
	&
	\leq
	2 \left\| (\partial_\epsilon \partial_\zeta F)\left(\tau\epsilon,\cdot,\tau\epsilon \left(\frac{1}{2}I + W_{\partial\Omega^i}\right)[\psi^i] + \zeta^i \right) \right\|_{\COi} 
	\\&
	\quad\times
	\left\|\left(\frac{1}{2}I + W_{\partial\Omega^i}\right)[\psi^i] -  \left(\frac{1}{2}I + W_{\partial\Omega^i}\right)[\Psi^i[\epsilon]]\right\|_{\COi}
	\\
	&	 
	\quad+ 2\, C_3\, \| \Psi^i[\epsilon] \|_{\COi}
	\left\|d_v \mathcal{N}_{\partial_\epsilon\partial_\zeta F} (\tau\epsilon,\tilde{\psi}^i_{2}) \right\|_{\mathcal{L}(\COi,\COi)} 
	\\
	&
    \quad\times
	\tau\epsilon
	\left\|\left(\frac{1}{2}I + W_{\partial\Omega^i}\right)[\psi^i] -  \left(\frac{1}{2}I + W_{\partial\Omega^i}\right)[\Psi^i[\epsilon]]\right\|_{\COi},
	\end{split}
	\end{equation}
	where 
	\begin{equation*}
	\begin{split}
	\tilde{\psi}^i_{2}=& \theta_2 \left(\tau\epsilon\left(\frac{1}{2}I + W_{\partial\Omega^i}\right)[\psi^i] + \zeta^i\right)\\ 
	& + (1-\theta_2) \left(\tau\epsilon \left(\frac{1}{2}I + W_{\partial\Omega^i}\right)[\Psi^i[\epsilon]] +\zeta^i\right),
	\end{split}
	\end{equation*}
	for some $\theta_2 \in ]0,1[$. 
	
	\smallskip
	{\it $\bullet$ Step 6.3: Estimate for $T_3$.}
	\\
	Finally we consider $T_3[\epsilon, \psi^i, \Psi^i[\epsilon]](\tau,\cdot)$. Adding and subtracting the term
	\begin{equation*}
	\left(\frac{1}{2}I + W_{\partial\Omega^i}\right)[\Psi^i[\epsilon]]^2 \, (\partial^2_\zeta F)\left(\tau\epsilon,t, \tau\epsilon \left(\frac{1}{2}I + W_{\partial\Omega^i}\right)[\psi^i](t) + \zeta^i \right)
	\end{equation*}
	in the right hand side of \eqref{A3} and using the triangular inequality, we obtain
	\begin{equation}\label{unicA3equationnorm1}
	\begin{split}
	\|&T_3[\epsilon, \psi^i, \Psi^i[\epsilon]](\tau,\cdot)\|_{\COi}\\
	&
	\leq
	\left\|
	\left(\frac{1}{2}I + W_{\partial\Omega^i}\right)[\psi^i]^2 \, (\partial^2_\zeta F)\left(\tau\epsilon,\cdot,\tau\epsilon \left(\frac{1}{2}I + W_{\partial\Omega^i}\right)[\psi^i ] + \zeta^i\right)
	\right.
	\\
	&
	\left.
	- \left(\frac{1}{2}I + W_{\partial\Omega^i}\right)[\Psi^i[\epsilon]]^2 \, (\partial^2_\zeta F)\left(\tau\epsilon,\cdot,\tau\epsilon \left(\frac{1}{2}I + W_{\partial\Omega^i}\right)[\psi^i] + \zeta^i\right)
	\right\|_{\COi}
	\\
	&
	+ \left\|
	\left(\frac{1}{2}I + W_{\partial\Omega^i}\right)[\Psi^i[\epsilon]]^2 \, (\partial^2_\zeta F)\left(\tau\epsilon,\cdot,\tau\epsilon \left(\frac{1}{2}I + W_{\partial\Omega^i}\right)[\psi^i ]+\zeta^i \right)
	\right.
	\\
	&
	\left.- \left(\frac{1}{2}I + W_{\partial\Omega^i}\right)[\Psi^i[\epsilon]]^2
	(\partial^2_\zeta F)\left(\tau\epsilon,\cdot,\tau\epsilon \left(\frac{1}{2}I + W_{\partial\Omega^i}\right) [\Psi^i[\epsilon] ] +\zeta^i\right)\right\|_{\COi}\,.
	\end{split}
	\end{equation}
	By Lemma \ref{C0C1alphaalgebra} in the Appendix and by the Mean Value Theorem in Banach space (see, e.g., Ambrosetti and Prodi \cite[Thm. 1.8]{AmPr95}),
	we can estimate the $\COi$ norm of $T_3[\epsilon, \psi^i, \Psi^i[\epsilon]](\tau,\cdot)$ (cf. \eqref{A3} and \eqref{unicA3equationnorm1}) as follows:
	\begin{equation}\label{unicA3equationnorm}
	\begin{split}
	\|T_3&[\epsilon, \psi^i, \Psi^i[\epsilon]](\tau,\cdot)\|_{\COi} 
	\\
	&
	\leq
	2 \left\| (\partial^2_\zeta F)\left(\tau\epsilon,\cdot, \tau\epsilon \left(\frac{1}{2}I + W_{\partial\Omega^i}\right)[\psi^i] + \zeta^i\right) \right\|_{\COi} 
	\\
	&
	\quad\times\left\|\left(\frac{1}{2}I + W_{\partial\Omega^i}\right)[\psi^i]^2 - \left(\frac{1}{2}I + W_{\partial\Omega^i}\right)[\Psi^i[\epsilon]]^2 \right\|_{\COi}
	\\
	&
	\quad+ 2 \left\|\left(\frac{1}{2}I + W_{\partial\Omega^i}\right)[\Psi^i[\epsilon]]^2\right\|_{\COi} 
	\\
	&
	\quad\times\left\| \mathcal{N}_{\partial^2_\zeta F}\left(\tau\epsilon, \tau\epsilon\left(\frac{1}{2}I + W_{\partial\Omega^i}\right)[\psi^i] +\zeta^i \right)\right.\\
	&
	\left.\qquad - \mathcal{N}_{\partial^2_\zeta F}\left(\tau\epsilon, \tau\epsilon\left(\frac{1}{2}I + W_{\partial\Omega^i}\right)[\Psi^i[\epsilon]] + \zeta^i \right)\right\|_{\COi}
	\\
	&
	\leq
	4 \left\| (\partial^2_\zeta F)\left(\tau\epsilon,\cdot,\tau\epsilon \left(\frac{1}{2}I + W_{\partial\Omega^i}\right)[\psi^i] + \zeta^i \right) \right\|_{\COi} 
	\\
	&
	\quad\times\left\|\left(\frac{1}{2}I + W_{\partial\Omega^i}\right)[\psi^i] - \left(\frac{1}{2}I + W_{\partial\Omega^i}\right)[\Psi^i[\epsilon]] \right\|_{\COi}
	\\
	&
	\quad\times\left\|\left(\frac{1}{2}I + W_{\partial\Omega^i}\right)[\psi^i] + \left(\frac{1}{2}I + W_{\partial\Omega^i}\right)[\Psi^i[\epsilon]] \right\|_{\COi}
	\\
	&
	\quad+ 4 C_3^2 \, \|\Psi^i[\epsilon]\|^2_{\COi}
	\left\|d_v \mathcal{N}_{\partial^2_\zeta F} (\tau\epsilon,\tilde{\psi}^i_{3}) \right\|_{\mathcal{L}(\COi,\COi)} 
	\\
	&
	\quad\times
	\tau \epsilon \left\|\left(\frac{1}{2}I + W_{\partial\Omega^i}\right)[\psi^i] - \left(\frac{1}{2}I + W_{\partial\Omega^i}\right)[\Psi^i[\epsilon]] \right\|_{\COi},
	\end{split}
	\end{equation}	
	where 
	\begin{equation*}
	\begin{split}
	\tilde{\psi}^i_{3} =& \theta_3 \left(\tau\epsilon\left(\frac{1}{2}I + W_{\partial\Omega^i}\right)[\psi^i] + \zeta^i\right)\\
	& + (1-\theta_3) \left(\tau\epsilon \left(\frac{1}{2}I + W_{\partial\Omega^i}\right)[\Psi^i[\epsilon]] +\zeta^i\right),
	\end{split}
	\end{equation*}
	for some $\theta_3 \in ]0,1[$. 
	Let $R$ be as in  \eqref{uniccostantR}. By the same argument used to prove \eqref{boundtildepsiiesp}, one verifies the inequalities
	\begin{equation}\label{boundoverpsiiesp}
	\|\tilde{\psi}^i_{1}\|_{\COi} \leq R, \quad 
	\|\tilde{\psi}^i_{2}\|_{\COi} \leq R, \quad
	\|\tilde{\psi}^i_{3}\|_{\COi} \leq R.
	\end{equation}
	
	By assumption \eqref{realanalhp}, the partial derivatives
	$\partial_\zeta\partial^2_\epsilon F (\eta,t,\zeta)$, $\partial_\zeta\partial_\epsilon\partial_\zeta F (\eta,t,\zeta)$ and $\partial_\zeta\partial^2_\zeta F (\eta,t,\zeta)$ exist for all $(\eta,t,\zeta) \in ]-\epsilon_0,\epsilon_0[ \times \partial\Omega^i \times \R$ and by Proposition \ref{differenzialeN_H} and Lemma \ref{C0C1alphaalgebra} (ii) in the Appendix, we obtain
	\begin{equation}\label{norms.1}
	\begin{aligned}	
	\| d_v \mathcal{N}_{\partial^2_\epsilon F } (\tau\eta, \tilde{\psi}^i_{1}) \|_{\mathcal{L}(\COi,\COi)} &\leq 2 \| \mathcal{N}_{\partial_\zeta\partial^2_\epsilon F } (\tau\eta, \tilde{\psi}^i_{1}) \|_{\COi}
	\\ 
	&
	=
	2\, \|\partial_\zeta \partial^2_\epsilon F(\tau\eta, \cdot,  \tilde{\psi}^i_{1}(\cdot) ) \|_{C^{1,\alpha}(\partial\Omega^i)}\,,
	\\
	\| d_v \mathcal{N}_{\partial_\epsilon \partial_\zeta F } (\tau\eta, \tilde{\psi}^i_{2}) \|_{\mathcal{L}(\COi,\COi)} & \leq 2\| \mathcal{N}_{\partial_\zeta\partial_\epsilon \partial_\zeta F } (\tau\eta, \tilde{\psi}^i_{2}) \|_{\COi}
	\\
	& =  2\, \|\partial_\zeta \partial_\epsilon \partial_\zeta F(\tau\eta, \cdot, \tilde{\psi}^i_{2}(\cdot)) \|_{C^{1,\alpha}(\partial\Omega^i)}\,,
	\\
	\| d_v \mathcal{N}_{\partial^2_\zeta F } (\tau\eta, \tilde{\psi}^i_{3}) \|_{\mathcal{L}(\COi,\COi)} &\leq  \| \mathcal{N}_{\partial_\zeta\partial^2_\zeta F } (\tau\eta, \tilde{\psi}^i_{3}) \|_{\COi}
	\\
	& = 2\|\partial_\zeta \partial^2_\zeta F(\tau\eta, \cdot, \tilde{\psi}^i_{3}(\cdot)) \|_{\COi}\,,
	\end{aligned}
	\end{equation}
	for all $ \eta \in ]0,\epsilon^\ast[$.
	By Proposition \ref{proponcompC1alpha} (ii) in the Appendix, there exists $C_6 > 0$ such that 
	\begin{equation}\label{norms.1.1}
	\begin{aligned}
	&\|\partial_\zeta \partial^2_\epsilon F(\tau\eta, \cdot,  \tilde{\psi}^i_{1}(\cdot) ) \|_{C^{1,\alpha}(\partial\Omega^i)}\\
	 &\qquad\qquad\leq  C_6 \|\partial_\zeta \partial^2_\epsilon F(\tau\eta, \cdot, \cdot) \|_{C^{1,\alpha}(\partial\Omega \times [-R,R])} \left(1+ \|\tilde{\psi}^i_{1}\|_{\COi}\right)^2,
	\\
	&\|\partial_\zeta \partial_\epsilon \partial_\zeta F(\tau\eta, \cdot,  \tilde{\psi}^i_{2}(\cdot) ) \|_{\COi}\\
	 &\qquad\qquad\leq C_6 \|\partial_\zeta \partial_\epsilon \partial_\zeta F(\tau\eta, \cdot, \cdot ) \|_{C^{1,\alpha}(\partial\Omega^i \times [-R,R])} \left(1+ \|\tilde{\psi}^i_{2}\|_{\COi}\right)^2,
	\\
	&\|\partial_\zeta \partial^2_\zeta F(\tau\eta, \cdot,  \tilde{\psi}^i_{3}(\cdot) ) \|_{\COi}\\
	 &\qquad\qquad\leq C_6 \|\partial_\zeta \partial^2_\zeta F(\tau\eta, \cdot, \cdot ) \|_{C^{1,\alpha}(\partial\Omega^i \times [-R,R])} \left(1+ \|\tilde{\psi}^i_{3}\|_{\COi}\right)^2,
	\end{aligned}
	\end{equation}
	for all $ \eta \in ]0,\epsilon^\ast[$.	Now, by assumption \eqref{realanalhp} one deduces  that the map $\tilde{\mathcal{N}}_F$ defined as in Lemma \ref{lemmaboundB(ep,dot,dot)} (with $B=F$)   is real analytic from $]-\epsilon_0,\epsilon_0[ \times \R$ to $\COi$.  Then one verifies that also the maps
	\begin{equation*}
	\begin{split}
	&\partial^2_\epsilon \partial_\zeta \tilde{\mathcal{N}}_F = \tilde{\mathcal{N}}_{\partial^2_\epsilon \partial_\zeta F}, \quad \partial_\epsilon \partial^2_\zeta \tilde{\mathcal{N}}_F = \tilde{\mathcal{N}}_{\partial_\epsilon \partial^2_\zeta F},\quad \partial^3_\zeta \tilde{\mathcal{N}}_F = \tilde{\mathcal{N}}_{\partial^3 F}\,,\\
	&\partial_\epsilon \partial_\zeta \tilde{\mathcal{N}}_F = \tilde{\mathcal{N}}_{\partial_\epsilon \partial_\zeta F}, \quad \partial^2_\zeta \tilde{\mathcal{N}}_F = \tilde{\mathcal{N}}_{\partial^2_\zeta F}
	\end{split}
	\end{equation*}
	are real analytic from $]-\epsilon_0,\epsilon_0[ \times \R$ to $\COi$. Hence, Lemma \ref{lemmaboundB(ep,dot,dot)} (with $m=1$) implies that there exists $C_7>0$ such that
	\begin{equation}\label{norms.2}
	\begin{aligned}
	&\sup_{\eta \in [-\epsilon^\ast,\epsilon^\ast]} 
	\|\partial_\zeta \partial^2_\epsilon F(\tau\eta, \cdot, \cdot) \|_{C^{1,\alpha}(\partial\Omega \times [-R,R])} \leq C_7,
	\\
	&\sup_{\eta \in [-\epsilon^\ast,\epsilon^\ast]} 
	\|\partial_\zeta \partial_\epsilon \partial_\zeta F(\tau\eta, \cdot, \cdot) \|_{C^{1,\alpha}(\partial\Omega \times [-R,R])} \leq C_7,
	\\
	&\sup_{\eta \in [-\epsilon^\ast,\epsilon^\ast]} 
	\|\partial_\zeta \partial^2_\zeta F(\tau\eta, \cdot, \cdot) \|_{C^{1,\alpha}(\partial\Omega \times [-R,R])} \leq C_7,
	\\
	&\sup_{\eta \in [-\epsilon^\ast,\epsilon^\ast]} 
	\|\partial_\epsilon \partial_\zeta F(\tau\eta, \cdot, \cdot) \|_{C^{1,\alpha}(\partial\Omega \times [\zeta^i - C_3R,\zeta^i + C_3R])} \leq C_7,
	\\
	&\sup_{\eta \in [-\epsilon^\ast,\epsilon^\ast]} 
	\|\partial^2_\zeta F(\tau\eta, \cdot, \cdot) \|_{C^{1,\alpha}(\partial\Omega \times [\zeta^i - C_3R,\zeta^i + C_3R])} \leq C_7.
	\end{aligned}
	\end{equation}
	Thus, by \eqref{boundoverpsiiesp}, \eqref{norms.1}, \eqref{norms.1.1}, and \eqref{norms.2}, and by the membership of $\epsilon \in ]0,\epsilon^\ast[$ and $\delta^\ast \in ]0,1[$, we have
	\begin{equation}\label{unicboundNe^2FezFz^2F}
	\begin{aligned}
	\| d_v \mathcal{N}_{\partial^2_\epsilon F } (\tau\epsilon, \tilde{\psi}^i_{1}) \|_{\mathcal{L}(\COi,\COi)} &\leq 2 C_6\, C_7\,(1+R)^2\,,
	\\
	\| d_v \mathcal{N}_{\partial_\epsilon \partial_\zeta F } (\tau\epsilon, \tilde{\psi}^i_{2}) \|_{\mathcal{L}(\COi,\COi)}& \leq  2 C_6\, C_7\,(1+R)^2\,,
	\\
	\| d_v \mathcal{N}_{\partial^2_\zeta F } (\tau\epsilon, \tilde{\psi}^i_{3}) \|_{\mathcal{L}(\COi,\COi)}& \leq  2 C_6\, C_7\,(1+R)^2\,,
	\end{aligned}
	\end{equation}
	uniformly with respect to $\tau \in ]0,1[$.
	
	We can now bound the $C^{1,\alpha}$ norms with respect to the variable $t \in \partial\Omega^i$ of \eqref{A1}, \eqref{A2}, and \eqref{A3} uniformly with respect to $\tau \in ]0,1[$. Indeed, by \eqref{inequality(1/2I + W)psiPsi}, \eqref{unicA1equationnorm} and \eqref{unicboundNe^2FezFz^2F}, we obtain
	\begin{equation}\label{unicboundA1}
	\|T_1[\epsilon, \psi^i, \Psi^i[\epsilon]](\tau,\cdot)\|_{\COi} \leq  2 C_6\, C_7\,(1+R)^2\delta^\ast
	\end{equation}
	for all $\tau\in]0,1[$.
	By Proposition \ref{proponcompC1alpha} (ii) in the Appendix, by \eqref{uniccostantR_1R_2} and \eqref{norms.2}, we obtain 
	\begin{equation}\label{unicboundezFz^2F}
	\begin{aligned}
	&\left\|(\partial_\epsilon \partial_\zeta F) \left(\tau\epsilon,\cdot, \tau\epsilon \left(\frac{1}{2}I + W_{\partial\Omega^i}\right)[\psi^i] + \zeta^i\right) \right\|_{\COi}
	\\
	&\qquad\leq C_6 \, C_7 \left(1 + \left\|\tau\epsilon \left(\frac{1}{2}I + W_{\partial\Omega^i}\right)[\psi^i] + \zeta^i\right\|_{\COi}\right)^2\\
	&\qquad
	\leq  C_6\, C_7 \,\left(1 + C_3R_1 +|\zeta^i| \right)^2,
	\\
	&\left\|(\partial^2_\zeta F)\left(\tau\epsilon,\cdot,\tau\epsilon \left(\frac{1}{2}I + W_{\partial\Omega^i}\right)[\psi^i] + \zeta^i\right) \right\|_{\COi} 
	\\
	&\qquad\leq C_6\, C_7  \left(1 + \left\|\tau\epsilon \left(\frac{1}{2}I + W_{\partial\Omega^i}\right)[\psi^i]+\zeta^i\right\|_{\COi}\right)^2\\
	&\qquad
	\leq  C_6\, C_7  \left(1 + C_3R_1 +|\zeta^i| \right)^2,
	\end{aligned}	
	\end{equation}
	for all $\tau\in]0,1[$.
	Hence, in view of \eqref{uniccostantR_1R_2}, \eqref{unicboundNe^2FezFz^2F} and \eqref{unicboundezFz^2F} and by \eqref{unicA2equationnorm} and \eqref{unicA3equationnorm} we have
	\begin{equation}\label{unicboundA2A3}
	\begin{aligned}
	&\|T_2[\epsilon, \psi^i, \Psi^i[\epsilon]](\tau,\cdot)\|_{\COi}\\
	 &\quad\leq   \left\{2  C_6 C_7  \left(1 + C_3R_1 +|\zeta^i| \right)^2 + 4 C_3 C_6 C_7 R_2 (1+R)^2 \right\} \delta^\ast\,,
	\\
	&\|T_3[\epsilon, \psi^i, \Psi^i[\epsilon]](\tau,\cdot)\|_{\COi}\\
	&\quad \leq  \left\{ 
	4  C_6 C_7  \left(1 + C_3 R_1 +|\zeta^i| \right)^2 R +  8 C_3^2 C_6 C_7 R_2^2 (1+R)^2
	\right\} \delta^\ast\,,
	\end{aligned}
	\end{equation}
	for all $\tau\in]0,1[$, where to obtain the inequality for $T_3[\epsilon, \psi^i, \Psi^i[\epsilon]](\tau,\cdot)$ we have also used  that 
	\begin{equation*}
	\left\|\left(\frac{1}{2}I + W_{\partial\Omega^i}\right)[\psi^i] + \left(\frac{1}{2}I + W_{\partial\Omega^i}\right)[\Psi^i[\epsilon]] \right\|_{\COi} \leq C_3(R_1 + R_2) \leq R
	\end{equation*}
	(cf. \eqref{uniccostantR}). Moreover, since the boundedness provided in \eqref{unicboundA1} and \eqref{unicboundA2A3} is uniform with respect to $\tau \in ]0,1[$, one verifies that the following inequality holds:
	\begin{equation}\label{intA123}
	\begin{split}
	& \bigg\| \int_{0}^{1} (1-\tau) \left\{  T_1[\epsilon, \psi^i, \Psi^i[\epsilon]](\tau,\cdot) +  2 \, T_2[\epsilon, \psi^i, \Psi^i[\epsilon]](\tau,\cdot)\right.\\
	&\left.\qquad\qquad\qquad\qquad\qquad\qquad\qquad\quad
	+ T_3[\epsilon, \psi^i, \Psi^i[\epsilon]](\tau,\cdot) \right\}  \,d\tau \bigg\|_{\COi}
	\\
	& 
	\leq \int_{0}^{1} (1-\tau) \|  T_1[\epsilon, \psi^i, \Psi^i[\epsilon]](\tau,\cdot) +  2 \, T_2[\epsilon, \psi^i, \Psi^i[\epsilon]](\tau,\cdot)\\
	&\qquad\qquad\qquad\qquad\qquad\qquad\qquad\quad
	+ T_3[\epsilon, \psi^i, \Psi^i[\epsilon]](\tau,\cdot) \|_{\COi} \,d\tau
	\\
	&
	\le \bigg\{ 2 C_6\, C_7\,(1+R)^2 + 4  C_6 C_7  \left(1 + C_3R_1 +|\zeta^i| \right)^2 + 8 C_3 C_6 C_7 R_2 (1+R)^2
	\\
	&\quad +4  C_6 C_7  \left(1 + C_3 R_1 +|\zeta^i| \right)^2 R +  8 C_3^2 C_6 C_7 R_2^2 (1+R)^2 \bigg\}\;\delta^\ast\,.
	\end{split} 
	\end{equation}
	Then, by \eqref{s1-S1<A123} and \eqref{intA123} we obtain 
	\begin{equation}\label{s1-S1}
	\begin{split}
	&\| S_2[\epsilon, \psi^i] - S_2[\epsilon,\Psi^i[\epsilon]] \|_{\COi}\\
	&\qquad \le \bigg\{ 2 C_6\, C_7\,(1+R)^2 + 4  C_6 C_7  \left(1 + C_3R_1 +|\zeta^i| \right)^2 + 8 C_3 C_6 C_7 R_2 (1+R)^2
	\\
	&\qquad \quad +4  C_6 C_7  \left(1 + C_3 R_1 +|\zeta^i| \right)^2 R +  8 C_3^2 C_6 C_7 R_2^2 (1+R)^2 \bigg\}\;\delta^\ast
	\end{split}
	\end{equation}
	(also recall that $\epsilon\in]0,1[$). 
	
	\smallskip
	{\it $\bullet$ Step 7: Conclusion for $S$.}
	\\
	Finally, by \eqref{unicS1inqual}, \eqref{unicS2inequal} and \eqref{s1-S1}, we have
	\begin{equation*}
	\| S[\epsilon, \psi^i] - S[\epsilon,\Psi^i[\epsilon]] \|_{\COi \times C^{0,\alpha}(\partial\Omega^i) \times \COo} \leq C_{8}\; \delta^\ast,
	\end{equation*}
	with
	\[
	\begin{split}
	C_8 \equiv&  \,C_4\, C_5\, (1+ R^\alpha) + 2 C_6\, C_7\,(1+R)^2 + 4  C_6 C_7  \left(1 + C_3R_1 +|\zeta^i| \right)^2 
	\\
	& + 8 C_3 C_6 C_7 R_2 (1+R)^2+4  C_6 C_7  \left(1 + C_3 R_1 +|\zeta^i| \right)^2 R \\
	&+  8 C_3^2 C_6 C_7 R_2^2 (1+R)^2.
	\end{split}
	\]
	
	\smallskip
	{\it $\bullet$ Step 8: Estimate for \eqref{unidensitiesinB0K} and determination of $\delta^\ast$.}
	\\
	By \eqref{N^-1S} and \eqref{unicconstantN-1} we conclude that the norm of the difference between $(\phi^o, \phi^i, \zeta, \psi^i)$ and
	\begin{equation*}
	(\Phi^o[\epsilon] , \Phi^i[\epsilon],	Z[\epsilon] , \Psi^i[\epsilon])
	\end{equation*} 
	in the space $\COo \times \COi_0 \times \R \times \COi$ is less than $C_1C_8\, \delta^\ast$. Then, by \eqref{PhiBK/2} and   by the triangular inequality we obtain 
	\[
	\|(\phi^o, \phi^i, \zeta, \psi^i) - (\phi^o_0, \phi^i_0, \zeta_0, \psi^i_0)\|_{\COo \times \COi_0 \times \R \times \COi} \leq C_1C_{8}\, \delta^\ast + \frac{K}{2} \,.
	\]
	Thus, in order to have $(\phi^o, \phi^i, \zeta, \psi^i) \in \mathcal{B}_{0,K}$, it suffices to take 
	\[
	\delta^\ast<\frac{K}{2C_1C_{8}}
	\] 
    in inequality \eqref{unidisuguiesp}. Then, for such choice of $\delta^\ast$, \eqref{unidensitiesinB0K} holds and the theorem is proved.
\end{proof}

	\subsection{Local uniqueness for the family of solutions.}\label{family} As a consequence of Theorem \ref{Thmunisol2}, we can derive the following local uniqueness result for the family $\{(u^o_\epsilon,u^i_\epsilon)\}_{\epsilon \in ]0,\epsilon'[}$.

	\begin{cor}\label{cor}	Let assumptions  \eqref{zetaicond}, \eqref{F1}, and \eqref{realanalhp} hold true.  Let $\epsilon' \in ]0,\epsilon_0[$ be as in Theorem \ref{existenceThm} (v).	Let $\{(u^o_\epsilon,u^i_\epsilon)\}_{\epsilon \in ]0,\epsilon'[}$ be as in Theorem \ref{uesol}. Let $\{(v^o_\epsilon,v^i_\epsilon)\}_{\epsilon \in ]0,\epsilon'[}$ be a family of functions such that  
		$(v^o_\epsilon,v^i_\epsilon) \in C^{1,\alpha}(\Omega(\epsilon)) \times C^{1,\alpha}(\epsilon\Omega^i)$ is a solution of problem \eqref{princeq} for all $\epsilon\in]0,\epsilon'[$. If 
		\begin{equation}\label{cor.eq1}
		\lim_{\epsilon\to 0^+}\epsilon^{-1}\left\| {v^i_\epsilon(\epsilon \cdot ) - u^i_\epsilon(\epsilon \cdot )} \right\| _{C^{1,\alpha}(\partial \Omega^i)}=0,
		\end{equation}
		then there exists $\epsilon_\ast\in]0,\epsilon'[$ such that 
		\[
		(v^o_\epsilon,v^i_\epsilon)=(u^o_\epsilon,u^i_\epsilon)\quad\forall \epsilon\in]0,\epsilon_\ast[\,.
		\]
	\end{cor}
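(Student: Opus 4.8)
The plan is to deduce Corollary \ref{cor} directly from Theorem \ref{Thmunisol2} by unwinding the definition of the limit in \eqref{cor.eq1}. Let $\epsilon^\ast \in ]0,\epsilon'[$ and $\delta^\ast \in ]0,+\infty[$ be the two constants provided by Theorem \ref{Thmunisol2}, associated with the family $\{(u^o_\epsilon,u^i_\epsilon)\}_{\epsilon \in ]0,\epsilon'[}$ of Theorem \ref{uesol}. The key observation is that hypothesis \eqref{cor.eq1} says precisely that the quantity $\epsilon^{-1}\|v^i_\epsilon(\epsilon\cdot)-u^i_\epsilon(\epsilon\cdot)\|_{C^{1,\alpha}(\partial\Omega^i)}$ tends to $0$ as $\epsilon\to 0^+$, and hence it is eventually smaller than the positive number $\delta^\ast$.

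Concretely, I would proceed as follows. First, by the definition of limit applied to \eqref{cor.eq1} with the tolerance $\delta^\ast>0$, there exists $\tilde\epsilon \in ]0,\epsilon'[$ such that
\[
\epsilon^{-1}\left\| v^i_\epsilon(\epsilon \cdot ) - u^i_\epsilon(\epsilon \cdot ) \right\| _{C^{1,\alpha}(\partial \Omega^i)} < \delta^\ast \qquad \forall \epsilon \in ]0,\tilde\epsilon[\,,
\]
which is equivalent to $\|v^i_\epsilon(\epsilon\cdot)-u^i_\epsilon(\epsilon\cdot)\|_{C^{1,\alpha}(\partial\Omega^i)} < \epsilon\delta^\ast$ for all $\epsilon \in ]0,\tilde\epsilon[$. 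Then I set
\[
\epsilon_\ast \equiv \min\{\tilde\epsilon,\epsilon^\ast\} \in ]0,\epsilon'[\,.
\]
For every $\epsilon \in ]0,\epsilon_\ast[$ we have $\epsilon \in ]0,\epsilon^\ast[$ and the pair $(v^o_\epsilon,v^i_\epsilon)$ is, by assumption, a solution of problem \eqref{princeq} that satisfies $\|v^i_\epsilon(\epsilon\cdot)-u^i_\epsilon(\epsilon\cdot)\|_{C^{1,\alpha}(\partial\Omega^i)} < \epsilon\delta^\ast$; hence Theorem \ref{Thmunisol2} applies and yields $(v^o_\epsilon,v^i_\epsilon)=(u^o_\epsilon,u^i_\epsilon)$. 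Since $\epsilon \in ]0,\epsilon_\ast[$ was arbitrary, the conclusion follows.

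This argument is essentially immediate once Theorem \ref{Thmunisol2} is available, so there is no serious obstacle: all the analytic work has already been carried out in the proof of Theorem \ref{Thmunisol2}. The only mild subtlety to keep in mind is that $\epsilon^\ast$ and $\delta^\ast$ from Theorem \ref{Thmunisol2} are fixed once and for all (independent of the competitor family), so the smallness needed here comes entirely from the qualitative decay rate in \eqref{cor.eq1} rather than from any further quantitative estimate; this is exactly why the strict inequality in the hypothesis of Theorem \ref{Thmunisol2} is convenient, as it turns the limit statement into the required pointwise bound for all sufficiently small $\epsilon$.
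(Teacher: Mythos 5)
Your proof is correct and follows essentially the same route as the paper: fix the constants $\epsilon^\ast,\delta^\ast$ from Theorem \ref{Thmunisol2}, use the limit \eqref{cor.eq1} to get the bound $\|v^i_\epsilon(\epsilon\cdot)-u^i_\epsilon(\epsilon\cdot)\|_{C^{1,\alpha}(\partial\Omega^i)}<\epsilon\delta^\ast$ for all sufficiently small $\epsilon$, and then apply the theorem pointwise in $\epsilon$. Nothing is missing; your explicit choice $\epsilon_\ast=\min\{\tilde\epsilon,\epsilon^\ast\}$ and your use of the strict inequality are if anything slightly more careful than the paper's own two-line argument.
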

	\begin{proof} Let $\epsilon^*$ and $\delta^*$ be as in Theorem \ref{Thmunisol2}.  By \eqref{cor.eq1} there is $\epsilon_*\in]0,\epsilon^*[$ such that 
		\[
		\left\| {v^i_\epsilon(\epsilon \cdot ) - u^i_\epsilon(\epsilon \cdot )} \right\| _{C^{1,\alpha}(\partial \Omega^i)}\le \epsilon\delta^*\quad\forall\epsilon\in]0,\epsilon_*[.
		\]
		Then the statement follows by Theorem \ref{Thmunisol2}.
	\end{proof}

\section{Appendix}

In this Appendix, we present some technical facts, which have been exploited in this paper, on product and composition of functions of $C^{0,\alpha}$ and $C^{1,\alpha}$ regularity. We start by introducing the following elementary result (cf. Lanza de Cristoforis \cite{La91}).

\begin{lemma}\label{C0C1alphaalgebra}
	Let $n \in \N\setminus\{0\}$. Let $\Omega$ be a bounded connected open subset of $\R^n$ of class $C^{1,\alpha}$. Then
	\begin{equation*}
	\|uv\|_{C^{0,\alpha}(\partial\Omega)} \leq \|u\|_{C^{0,\alpha}(\partial\Omega)} \, \|v\|_{C^{0,\alpha}(\partial\Omega)} \qquad \forall u,v \in C^{0,\alpha}(\partial\Omega),
	\end{equation*}
	and 
	\begin{equation*}
	\|uv\|_{C^{1,\alpha}(\partial\Omega)} \leq 2 \, \|u\|_{C^{1,\alpha}(\partial\Omega)} \, \|v\|_{C^{1,\alpha}(\partial\Omega)} \qquad \forall u,v \in C^{1,\alpha}(\partial\Omega).
	\end{equation*}
\end{lemma}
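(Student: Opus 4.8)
The plan is to reduce both inequalities to the corresponding product estimates for Schauder spaces on a single closed chart ball and to prove those by elementary manipulations of H\"older norms. Fix a finite atlas $\{(\overline{\mathcal{U}_l},\gamma_l^{(-1)})\}_{l=1,\dots,k}$ of $\partial\Omega$ with parametrizations $\gamma_l\colon \overline{B_{n-1}(0,1)}\to\overline{\mathcal{U}_l}$, as in Section \ref{notation}. Since $(uv)\circ\gamma_l=(u\circ\gamma_l)(v\circ\gamma_l)$ and $\|\phi\|_{C^{m,\alpha}(\partial\Omega)}=\sum_{l=1}^k\|\phi\circ\gamma_l\|_{C^{m,\alpha}(\overline{B_{n-1}(0,1)})}$, once a product estimate is established chart by chart the estimate on $\partial\Omega$ follows from the elementary inequality $\sum_l a_l b_l\le\big(\sum_l a_l\big)\big(\sum_l b_l\big)$, valid for nonnegative reals. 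Hence it suffices to show, for $f,g$ in $C^{0,\alpha}(\overline{B_{n-1}(0,1)})$ (resp. $C^{1,\alpha}(\overline{B_{n-1}(0,1)})$), that $\|fg\|_{C^{0,\alpha}}\le\|f\|_{C^{0,\alpha}}\|g\|_{C^{0,\alpha}}$ (resp. $\|fg\|_{C^{1,\alpha}}\le 2\|f\|_{C^{1,\alpha}}\|g\|_{C^{1,\alpha}}$), all norms being taken on $\overline{B_{n-1}(0,1)}$.

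For the $C^{0,\alpha}$ estimate on a chart I would combine $\|fg\|_{C^0}\le\|f\|_{C^0}\|g\|_{C^0}$ with the decomposition $f(x)g(x)-f(y)g(y)=f(x)\big(g(x)-g(y)\big)+\big(f(x)-f(y)\big)g(y)$, which yields $|fg|_\alpha\le\|f\|_{C^0}\,|g|_\alpha+|f|_\alpha\,\|g\|_{C^0}$ for the H\"older seminorm on the ball. Adding the two bounds gives precisely $\|fg\|_{C^{0,\alpha}}\le\|f\|_{C^{0,\alpha}}\|g\|_{C^{0,\alpha}}$, because the product $\|f\|_{C^{0,\alpha}}\|g\|_{C^{0,\alpha}}=(\|f\|_{C^0}+|f|_\alpha)(\|g\|_{C^0}+|g|_\alpha)$ contains the extra nonnegative term $|f|_\alpha|g|_\alpha$ which is absent from the left-hand side.

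For the $C^{1,\alpha}$ estimate on a chart I would differentiate via the Leibniz rule $\partial_j(fg)=(\partial_j f)g+f\,\partial_j g$ and bound $\|fg\|_{C^1}$ and $\sum_j|\partial_j(fg)|_\alpha$ separately. The first is handled exactly as the $C^0$ case above, applied to $f,g$ and to the pairs $\partial_j f,g$ and $f,\partial_j g$, and gives $\|fg\|_{C^1}\le\|f\|_{C^1}\|g\|_{C^1}$. For the second, one applies the product estimate for the $C^{0,\alpha}$-seminorm to each summand, $|(\partial_j f)g|_\alpha\le\|\partial_j f\|_{C^0}|g|_\alpha+\|g\|_{C^0}|\partial_j f|_\alpha$ and likewise for $|f\,\partial_j g|_\alpha$; the H\"older seminorms $|f|_\alpha,|g|_\alpha$ appearing here are not themselves part of the $C^{1,\alpha}$ norm, so one reintroduces them through the sup-norms of the gradients, using that $\overline{B_{n-1}(0,1)}$ is convex and hence $|f(x)-f(y)|\le\|\nabla f\|_{C^0}\,|x-y|$. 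Summing over $j$, recalling $\|f\|_{C^{1,\alpha}}=\|f\|_{C^1}+\sum_j|\partial_j f|_\alpha$, and collecting the resulting terms produces the factor $2$ in front of $\|f\|_{C^{1,\alpha}}\|g\|_{C^{1,\alpha}}$; morally, one factor comes from the $\|fg\|_{C^1}$ part and one from the H\"older part, the latter absorbing the two summands of the Leibniz rule together with the passage from H\"older seminorms of $f,g$ to sup-norms of $\nabla f,\nabla g$.

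There is no genuine obstacle in this argument: it is entirely routine. The only points requiring attention are the reduction from $\partial\Omega$ to a single chart (which is lossless precisely because the Schauder norm on $\partial\Omega$ is defined as a sum over the atlas, so submultiplicativity transfers with no extra constant) and the constant chasing that yields the factor $2$ in the $C^{1,\alpha}$ inequality; for the latter one may also consult Lanza de Cristoforis \cite{La91}.
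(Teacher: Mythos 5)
You should first note that the paper itself offers no proof of this lemma: it is quoted as a known elementary fact from Lanza de Cristoforis \cite{La91}, so your argument has to stand on its own rather than be compared with an internal one. Your reduction to a single chart is valid, precisely because the norm on $C^{m,\alpha}(\partial\Omega)$ is defined as the sum over the atlas of the chart norms, and your proof of the $C^{0,\alpha}$ inequality on the chart (splitting $f(x)g(x)-f(y)g(y)$ and absorbing the missing term $|f|_\alpha|g|_\alpha$) is complete and correct. The $C^1$ submultiplicativity via Leibniz is also fine for the sum-type norm used here.

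The gap is in the constant chasing for the $C^{1,\alpha}$ part, which is asserted rather than proved. When you reintroduce $|f|_\alpha$ and $|g|_\alpha$ through the gradients you use $|f(x)-f(y)|\le\|\nabla f\|_{C^0}\,|x-y|$; to pass to the $\alpha$-seminorm you must write $|x-y|=|x-y|^{\alpha}|x-y|^{1-\alpha}$, and on $\overline{B_{n-1}(0,1)}$ the factor $|x-y|^{1-\alpha}$ is only bounded by $2^{1-\alpha}$, not by $1$. Carrying this through, your bookkeeping yields an upper bound of the form $a_0b_0+a_1b_0+a_0b_1+a_2b_0+a_0b_2+2^{2-\alpha}a_1b_1$, where $a_0=\|f\|_{C^0}$, $a_1=\sum_j\|\partial_jf\|_{C^0}$, $a_2=\sum_j|\partial_jf|_\alpha$ (and similarly $b_i$ for $g$). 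Since $2^{2-\alpha}>2$, this is not dominated term by term by $2(a_0+a_1+a_2)(b_0+b_1+b_2)$, and as an inequality between free nonnegative numbers it is simply false (take $a_0=a_2=b_0=b_2=0$, $a_1=b_1=1$); to close the argument one must exploit that the $a_i,b_i$ come from actual functions, e.g.\ via a Landau-type interpolation bounding $\|\nabla f\|_{C^0}$ in terms of $\|f\|_{C^0}$ and $\sum_j|\partial_jf|_\alpha$ on the chart ball, or redo the estimate more sharply. Dropping the diameter factor, as your sketch implicitly does, amounts to assuming the chart has diameter at most one, which $\overline{B_{n-1}(0,1)}$ does not. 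So either supply that extra absorption step, or settle for a larger explicit constant (for instance via $\|fg\|_{C^{1,\alpha}}\le\|fg\|_{C^0}+\sum_j\|(\partial_jf)g\|_{C^{0,\alpha}}+\sum_j\|f\,\partial_jg\|_{C^{0,\alpha}}$ and the $C^{0,\alpha}$ result), which would be harmless for every use of the lemma in this paper but does not prove the statement with the constant $2$ as written; for that precise constant the citation of \cite{La91} remains the honest justification.
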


We now present a result on composition of a $C^{m,\alpha}$ function, with $m\in \{0,1\}$, with a $C^{1,\alpha}$ function.

\begin{lemma}\label{lemmacompC1alpha}
	Let $n,d \in \N \setminus \{0\}$ and $\alpha \in \, ]0,1]$. Let $\Omega_1$ be an open bounded convex subset of $\R^n$ and $\Omega_2$ be an open bounded convex subset of $\R^d$. Let $v = (v_1,\dots,v_n) \in (C^{1,\alpha}(\overline{\Omega_2}))^n$ such that 
	$v(\overline{\Omega_2}) \subset \overline{\Omega_1}$. Then the following statement holds.
	\begin{enumerate}
		\item[(i)] If $u \in C^{0,\alpha}(\overline{\Omega_1})$, then
		\begin{equation*}
		\|u(v(\cdot))\|_{C^{0,\alpha}(\overline{\Omega_2})} \leq \|u\|_{C^{0,\alpha}(\overline{\Omega_1})} 
		\left(1 + \|v\|^\alpha_{(C^{1,\alpha}(\overline{\Omega_2}))^n} \right).
		\end{equation*}
		
		\item[(ii)] If $u \in C^{1,\alpha}(\overline{\Omega_1})$, then
		\begin{equation*}
		\|u(v(\cdot))\|_{C^{1,\alpha}(\overline{\Omega_2})} 
		\leq
		(1 + nd)^2 \|u\|_{C^{1,\alpha}(\overline{\Omega_1})} \left(1 + \, \|v\|_{(C^{1,\alpha}(\overline{\Omega_2}))^n}  \right)^2.
		\end{equation*}  
	\end{enumerate}
\end{lemma}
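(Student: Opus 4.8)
The plan is to establish (i) directly and then obtain (ii) from (i) combined with the product estimate of Lemma~\ref{C0C1alphaalgebra}~(i); the convexity of $\overline{\Omega_2}$ enters only through the mean value theorem. For (i), the sup-norm contribution is immediate, since $v(\overline{\Omega_2})\subseteq\overline{\Omega_1}$ gives $\|u(v(\cdot))\|_{C^0(\overline{\Omega_2})}\le\|u\|_{C^0(\overline{\Omega_1})}\le\|u\|_{C^{0,\alpha}(\overline{\Omega_1})}$. For the H\"older seminorm, applying the mean value theorem componentwise on the convex set $\overline{\Omega_2}$ yields the Lipschitz bound $|v(x)-v(y)|\le\|v\|_{(C^{1,\alpha}(\overline{\Omega_2}))^n}\,|x-y|$ for all $x,y\in\overline{\Omega_2}$, whence
\[
|u(v(x))-u(v(y))|\le|u:\Omega_1|_\alpha\,|v(x)-v(y)|^\alpha\le\|u\|_{C^{0,\alpha}(\overline{\Omega_1})}\,\|v\|^\alpha_{(C^{1,\alpha}(\overline{\Omega_2}))^n}\,|x-y|^\alpha;
\]
adding the two estimates proves (i).

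For (ii), note first that, since $u\in C^{1,\alpha}(\overline{\Omega_1})$ and $v\in(C^{1,\alpha}(\overline{\Omega_2}))^n$ with $v(\overline{\Omega_2})\subseteq\overline{\Omega_1}$, the composition $u\circ v$ is of class $C^1$ on $\overline{\Omega_2}$ and the chain rule gives
\[
\partial_{x_k}\bigl(u(v(\cdot))\bigr)(x)=\sum_{j=1}^{n}(\partial_{x_j}u)(v(x))\,\partial_{x_k}v_j(x)\qquad\text{for }k=1,\dots,d.
\]
Here $\partial_{x_j}u\in C^{0,\alpha}(\overline{\Omega_1})$, so part (i) applied with $\partial_{x_j}u$ in place of $u$, together with $\|\partial_{x_j}u\|_{C^{0,\alpha}(\overline{\Omega_1})}\le\|u\|_{C^{1,\alpha}(\overline{\Omega_1})}$, bounds $\|(\partial_{x_j}u)(v(\cdot))\|_{C^{0,\alpha}(\overline{\Omega_2})}$ by $\|u\|_{C^{1,\alpha}(\overline{\Omega_1})}\bigl(1+\|v\|^\alpha_{(C^{1,\alpha}(\overline{\Omega_2}))^n}\bigr)$, whereas $\partial_{x_k}v_j\in C^{0,\alpha}(\overline{\Omega_2})$ with $\|\partial_{x_k}v_j\|_{C^{0,\alpha}(\overline{\Omega_2})}\le\|v_j\|_{C^{1,\alpha}(\overline{\Omega_2})}$. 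Estimating each summand by Lemma~\ref{C0C1alphaalgebra}~(i) and summing in $j$ then gives
\[
\bigl\|\partial_{x_k}\bigl(u(v(\cdot))\bigr)\bigr\|_{C^{0,\alpha}(\overline{\Omega_2})}\le\|u\|_{C^{1,\alpha}(\overline{\Omega_1})}\bigl(1+\|v\|^\alpha_{(C^{1,\alpha}(\overline{\Omega_2}))^n}\bigr)\,\|v\|_{(C^{1,\alpha}(\overline{\Omega_2}))^n}.
\]

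To conclude, I would write $\|u(v(\cdot))\|_{C^{1,\alpha}(\overline{\Omega_2})}=\|u(v(\cdot))\|_{C^0(\overline{\Omega_2})}+\sum_{k=1}^{d}\|\partial_{x_k}(u(v(\cdot)))\|_{C^{0,\alpha}(\overline{\Omega_2})}$, insert the bound $\|u(v(\cdot))\|_{C^0(\overline{\Omega_2})}\le\|u\|_{C^{1,\alpha}(\overline{\Omega_1})}$ and the $d$ estimates just obtained, and then use the elementary inequalities $\|v\|^\alpha_{(C^{1,\alpha}(\overline{\Omega_2}))^n}\le1+\|v\|_{(C^{1,\alpha}(\overline{\Omega_2}))^n}$ and $t\le1+t$ to factor out $(1+\|v\|_{(C^{1,\alpha}(\overline{\Omega_2}))^n})^2$; the surviving numerical coefficient is of order $d$ and is comfortably dominated by $(1+nd)^2$. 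The whole argument is routine bookkeeping, so I do not expect a genuine obstacle: the only points calling for a little care are the a priori $C^1$-smoothness of $u\circ v$ (standard, from the chain rule and the continuous extension of the derivatives to the boundary) and keeping track of the two dimensions $n$ (number of terms produced by the chain rule) and $d$ (number of outer variables and of components of each gradient) so that all constants are absorbed into $(1+nd)^2$.
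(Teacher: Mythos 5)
Your argument is correct, but note that the paper itself gives no proof of this lemma: it is stated in the Appendix as a classical fact (the product estimate of Lemma \ref{C0C1alphaalgebra} is attributed to Lanza de Cristoforis \cite{La91}, and Lemma \ref{lemmacompC1alpha} is then used directly to prove Proposition \ref{proponcompC1alpha}), so there is no in-paper proof to compare against. What you write is the standard route and the bookkeeping works: in (i) the Lipschitz bound $|v(x)-v(y)|\le \|v\|_{(C^{1,\alpha}(\overline{\Omega_2}))^n}|x-y|$ follows from the mean value theorem on the convex set $\overline{\Omega_2}$ together with the paper's sum convention for product norms, and in (ii) the chain rule plus part (i) plus the H\"older product inequality yields a constant of order $d$, which is indeed dominated by $(1+nd)^2$ after using $\|v\|^{\alpha}\le 1+\|v\|$. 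Two small points of care: Lemma \ref{C0C1alphaalgebra} is stated for $C^{0,\alpha}(\partial\Omega)$ of a $C^{1,\alpha}$ domain, so you should invoke the same elementary inequality $\|fg\|_{C^{0,\alpha}(\overline{\Omega_2})}\le\|f\|_{C^{0,\alpha}(\overline{\Omega_2})}\|g\|_{C^{0,\alpha}(\overline{\Omega_2})}$ directly on $\overline{\Omega_2}$ rather than cite that lemma verbatim; and since $v(x)$ may lie on $\partial\Omega_1$ even for interior $x$, the chain rule at such points uses the differentiability of $u$ relative to $\overline{\Omega_1}$, which you get from the integral mean-value identity on segments of the \emph{convex} set $\overline{\Omega_1}$ (or from a Whitney-type extension) — so convexity of $\Omega_1$, not only of $\Omega_2$, is quietly being used; this is standard, as you say, but worth stating.
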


Then, by Lemma \ref{lemmacompC1alpha}, we deduce the following Proposition \ref{proponcompC1alpha}.

\begin{prop}\label{proponcompC1alpha}
	Let $n \in \N \setminus \{0\}$. Let $\alpha \in ]0,1]$. Let $\Omega$ be a bounded connected open subset of $\R^n$ of class $C^{1,\alpha}$. Let $R>0$. Then the following holds.
	\begin{enumerate}
		\item[(i)] There exists $c_0>0$ such that 
		\begin{equation*}
		\|u(\cdot,v(\cdot))\|_{C^{0,\alpha}(\partial\Omega)} \leq c_0  \|u\|_{C^{0,\alpha}(\partial\Omega \times [-R,R])} \left(1 + \|v\|^\alpha_{C^{1,\alpha}(\partial\Omega)} \right).
		\end{equation*}
		for all $u \in C^{0,\alpha}(\partial\Omega \times \R)$ and for all $v \in C^{1,\alpha}(\partial \Omega)$ such that $v(\partial \Omega) \subset [-R,R]$. 	
		
		\item[(ii)] There exists $c_1>0$ such that 
		\begin{equation*}
		\|u(\cdot,v(\cdot))\|_{C^{1,\alpha}(\partial\Omega)} \leq c_1  \|u\|_{C^{1,\alpha}(\partial\Omega \times [-R,R])} \left(1 + \|v\|_{C^{1,\alpha}(\partial\Omega)} \right)^2.
		\end{equation*}
		for all $u \in C^{1,\alpha}(\partial\Omega \times \R)$ and for all $v \in C^{1,\alpha}(\partial \Omega)$ such that $v(\partial \Omega) \subset [-R,R]$. 
	\end{enumerate}
\end{prop}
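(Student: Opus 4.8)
The plan is to deduce both statements directly from Lemma \ref{lemmacompC1alpha} by localizing the manifold $\partial\Omega$ with a finite atlas and then applying the Euclidean composition estimates to each chart. First I would fix a finite open covering $\mathcal{U}_1,\dots,\mathcal{U}_k$ of $\partial\Omega$ together with $C^{1,\alpha}$ local parametrization maps $\gamma_l\colon\overline{B_{n-1}(0,1)}\to\overline{\mathcal{U}_l}$ as in Section \ref{notation}, and assume (without loss of generality) that the norms on $C^{m,\alpha}(\partial\Omega)$ for $m\in\{0,1\}$ are the ones defined by this atlas. For $u\in C^{m,\alpha}(\partial\Omega\times\R)$ and $v\in C^{1,\alpha}(\partial\Omega)$ with $v(\partial\Omega)\subseteq[-R,R]$, the pullback of the composed function $u(\cdot,v(\cdot))$ by $\gamma_l$ is $u(\gamma_l(\cdot),v(\gamma_l(\cdot)))$, which is the composition of the $\R^n$-valued map $z\mapsto(\gamma_l(z),v(\gamma_l(z)))\in\overline{B_{n-1}(0,1)}\times[-R,R]$ with $u$. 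Thus the task reduces to bounding $\|u(w(\cdot))\|_{C^{m,\alpha}(\overline{B_{n-1}(0,1)})}$, where $w=(\gamma_l,v\circ\gamma_l)$ takes values in a fixed compact box.

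The second step is to put ourselves in the setting of Lemma \ref{lemmacompC1alpha}: enlarge the target box $\overline{B_{n-1}(0,1)}\times[-R,R]$ to an open bounded convex set $\Omega_1\subset\R^n$ (for instance a slightly larger ball or product of intervals), take $\Omega_2$ to be an open bounded convex set containing $\overline{B_{n-1}(0,1)}$ (again a slightly larger ball), and extend $u$ to $\overline{\Omega_1}$ and $w$ to $\overline{\Omega_2}$ without increasing the relevant norms by more than a fixed multiplicative constant (standard extension, or simply shrink/choose the convex sets so that no extension is needed beyond trivially larger domains). Then Lemma \ref{lemmacompC1alpha} (i) gives, for $m=0$,
\[
\|u(w(\cdot))\|_{C^{0,\alpha}(\overline{B_{n-1}(0,1)})}\le \|u\|_{C^{0,\alpha}(\overline{\Omega_1})}\bigl(1+\|w\|_{(C^{1,\alpha}(\overline{\Omega_2}))^n}^\alpha\bigr),
\]
and Lemma \ref{lemmacompC1alpha} (ii) gives, for $m=1$,
\[
\|u(w(\cdot))\|_{C^{1,\alpha}(\overline{B_{n-1}(0,1)})}\le (1+n^2)^2\,\|u\|_{C^{1,\alpha}(\overline{\Omega_1})}\bigl(1+\|w\|_{(C^{1,\alpha}(\overline{\Omega_2}))^n}\bigr)^2.
\]

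The third step is bookkeeping on the norms. The norm $\|w\|_{(C^{1,\alpha}(\overline{\Omega_2}))^n}$ splits into the contribution of $\gamma_l$, which is a fixed finite constant $\Gamma_l$ depending only on the atlas, and the contribution of $v\circ\gamma_l$, which is controlled by $\|v\circ\gamma_l\|_{C^{1,\alpha}(\overline{B_{n-1}(0,1)})}\le \Gamma_l'\,\|v\|_{C^{1,\alpha}(\partial\Omega)}$ by continuity of the pullback map; similarly $\|u\|_{C^{m,\alpha}(\overline{\Omega_1})}$ is bounded by a constant times $\|u\|_{C^{m,\alpha}(\partial\Omega\times[-R,R])}$ after the pullback by $(\gamma_l,\mathrm{id})$. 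Absorbing all these atlas-dependent constants (and the factor $(1+n^2)^2$) into a single constant, summing over $l=1,\dots,k$, and using that $1+\|v\|_{C^{1,\alpha}(\partial\Omega)}^\alpha$ and $(1+\|v\|_{C^{1,\alpha}(\partial\Omega)})^2$ each dominate the corresponding chartwise factors up to a constant, yields (i) with a suitable $c_0>0$ and (ii) with a suitable $c_1>0$. The only mildly delicate point—really the main thing to be careful about rather than a genuine obstacle—is to make sure all constants produced along the way ($\Gamma_l$, $\Gamma_l'$, the extension constants, and the number $k$ of charts) depend only on $\Omega$, $n$, $\alpha$, and $R$, and not on $u$ or $v$; this is immediate once the atlas is fixed at the outset.
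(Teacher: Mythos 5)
Your proposal follows essentially the same route as the paper's proof: localize with a finite atlas, pull $u$ back through $(\gamma_l,\mathrm{id})$ to $\tilde u^l(z,s)=u(\gamma_l(z),s)$, write the composition chartwise as $\tilde u^l(z,v(\gamma_l(z)))$, apply Lemma \ref{lemmacompC1alpha} on each chart, and sum while absorbing the atlas constants. The only slips are cosmetic rather than substantive: the inner map should be $z\mapsto(z,v(\gamma_l(z)))$ into $\overline{B_{n-1}(0,1)}\times[-R,R]$ (not $(\gamma_l,v\circ\gamma_l)$, which would land in $\partial\Omega\times\R\subset\R^{n+1}$ and force an extension of $u$ off the manifold), and no enlargement/extension step is needed at all, since $B_{n-1}(0,1)\times\,]-R,R[$ and $B_{n-1}(0,1)$ are already open, bounded and convex, so the lemma applies directly.
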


\begin{proof}
	We prove only statement (ii). The proof of statement (i) can be obtained adapting the one of point (ii) and using Lemma \ref{lemmacompC1alpha} (i) instead of Lemma \ref{lemmacompC1alpha} (ii).
	
	Since $\partial\Omega$ is compact and of class $C^{1,\alpha}$, a standard argument shows that there are a finite cover of $\partial \Omega$ consisting of open subsets $\mathcal{U}_1,\dots,\mathcal{U}_k$  of $\partial\Omega$ and for each $j\in\{1,\dots,k\}$ a $C^{1,\alpha}$ diffeomorphism $\gamma_j$ from $\overline{B_{n-1}(0,1)}$ to the closure of $\mathcal{U}_j$ in $\partial\Omega$.
	Then, for a fixed $j \in \{1,\dots,k\}$ we define the functions $\tilde{u}^j: \overline{B_{n-1}(0,1)} \times \R \to \R$, $\tilde{v}^j: \overline{B_{n-1}(0,1)} \to \R$, and $\tilde w^j: \overline{B_{n-1}(0,1)} \to \overline{B_{n-1}(0,1)} \times \R$ by setting
	\begin{equation*}
		\begin{aligned}
			&\tilde{u}^j(t',s) \equiv u(\gamma_j(t'),s) \qquad &&\forall(t',s) \in \overline{B_{n-1}(0,1)} \times \R,
			\\
			&\tilde{v}^j(t') \equiv v(\gamma_j(t')) \qquad &&\forall t' \in \overline{B_{n-1}(0,1)},
			\\
			&\tilde{w}^j(t') \equiv (t',\tilde{v}^j(t')) \qquad 
			&&\forall t' \in \overline{B_{n-1}(0,1)}.
		\end{aligned}
	\end{equation*}
	Since $v(\partial\Omega) \subset [-R,R]$, it follows that $\tilde{w}^j(\overline{B_{n-1}(0,1)}) \subset \overline{B_{n-1}(0,1)} \times [-R,R]$. Moreover,   we can see that there exists $d_j>0$ such that
	\begin{equation*}
		\|\tilde{w}^j\|_{(C^{1,\alpha}(\overline{B_{n-1}(0,1)}))^{n}} \leq d_j \|\tilde{v}^j\|_{\left(C^{1,\alpha}(\overline{B_{n-1}(0,1)})\right)}. 
	\end{equation*} 
	Then Lemma \ref{lemmacompC1alpha} implies that there exists $c_j>0$ such that
	\begin{equation}\label{proponcompC1alpha.eq1}
		\begin{split}
			\|\tilde{u}^j(\cdot,\tilde{v}^j(\cdot))\|&_{C^{1,\alpha}(\overline{B_{n-1}(0,1)})}  = \|\tilde{u}^j(\tilde{w}^j(\cdot))\|_{(C^{1,\alpha}(\overline{B_{n-1}(0,1)}))^n} 
			\\
			&\leq c_j \|\tilde{u}^j\|_{C^{1,\alpha}(\overline{B_{n-1}(0,1)}\times [-R,R])} \left(1 + \|\tilde{w}^j\|_{(C^{1,\alpha}(\overline{B_{n-1}(0,1)}))^n}\right)^2
			\\
			&\leq c_j \|\tilde{u}^j\|_{C^{1,\alpha}(\overline{B_{n-1}(0,1)}\times [-R,R])} \left(1 + d_j \|\tilde{v}^j\|_{C^{1,\alpha}(\overline{B_{n-1}(0,1)})}\right)^2.
		\end{split}
	\end{equation}
	Without loss of generality, we can now assume that the norm of $C^{1,\alpha}(\partial\Omega)$ is defined on the atlas $\{(\mathcal{U}_j,\gamma_j)\}_{j\in\{1,\dots,k\}}$ (cf.~Section \ref{notation}). Then by \eqref{proponcompC1alpha.eq1} we have
	\begin{equation}\label{proponcompC1alpha.eq2}
		\begin{split}
			\|u(\cdot,v(\cdot))\|_{C^{1,\alpha}(\partial\Omega)}&=\sum_{j=1}^k\|u(\gamma_j(\cdot),v(\gamma_j(\cdot)))\|_{C^{1,\alpha}(\overline{B_{n-1}(0,1)})}\\
			&=\sum_{j=1}^k\|\tilde{u}^j(\cdot,\tilde{v}^j(\cdot))\|_{C^{1,\alpha}(\overline{B_{n-1}(0,1)})}\\
			&\le \sum_{j=1}^kc_j \|\tilde{u}^j\|_{C^{1,\alpha}(\overline{B_{n-1}(0,1)}\times [-R,R])} \left(1 + d_j \|\tilde{v}^j\|_{C^{1,\alpha}(\overline{B_{n-1}(0,1)})}\right)^2\,.
		\end{split}
	\end{equation}
	Moreover,
	\[
	\|\tilde{u}^j\|_{C^{1,\alpha}(\overline{B_{n-1}(0,1)}\times [-R,R])}\leq \|u\|_{C^{1,\alpha}(\partial\Omega \times [-R,R] )}
	\]
	and 
	\[
	\begin{split}
	&\left(1 + d_j \|\tilde{v}^j\|_{C^{1,\alpha}(\overline{B_{n-1}(0,1)})}\right)^2\\
	&\quad\le(1+d_j)^2\left(1 + \|\tilde{v}^j\|_{C^{1,\alpha}(\overline{B_{n-1}(0,1)})}\right)^2\le (1+d_j)^2\left(1 + \|v\|_{C^{1,\alpha}(\partial\Omega)} \right)^2.
	\end{split}
	\]
	Hence, \eqref{proponcompC1alpha.eq2} implies that
	\[
	\begin{split}
	&\|u(\cdot,v(\cdot))\|_{C^{1,\alpha}(\partial\Omega)}\\
	&\quad\le k\, \max \{ c_1(1+d_1)^2,\dots,c_k(1+d_k)^2 \} \|u\|_{C^{1,\alpha}(\partial\Omega \times [-R,R] )} \left(1 + \|v\|_{C^{1,\alpha}(\partial\Omega)} \right)^2
	\end{split}
	\]
	and the proposition is proved.
\end{proof}

\section*{Acknowledgement}
The research of the R.~Molinarolo was supported by HORIZON 2020 RISE project ``MATRIXASSAY'' under project number 644175, during the author's secondment at the University of Texas at Dallas. The author gratefully acknowledges the University of Texas at Dallas and the University of Tulsa for the great research environment and the friendly atmosphere provided. P.~Musolino is member of the `Gruppo Nazionale per l'Analisi Matematica, la Probabilit\`a e le loro Applicazioni' (GNAMPA) of the `Istituto Nazionale di Alta Matematica' (INdAM).









\end{document}